\newcommand{\comment}[1]{}   %% PAB
\def\diag{{\mathrm {\tt diag}}}   %% PAB
\def\cN{{\cal N}}   %% PAB
\def\trace{{\mathrm{\tt trace}}} %% Olga
\newtheorem{thm}{Theorem}
\newtheorem{lem}{Lemma}
\newtheorem{prop}{Proposition}
\newtheorem{rem}{Remark}
\newcommand{\tcm}{\textcolor{black}}
\newcolumntype{N}{>{\centering\arraybackslash}m{.5in}}
\begin{document}
\title{Implementation of Control Strategies for Sterile Insect Techniques}
\author{Pierre-Alexandre Bliman$^{1}$\footnote{corresponding author: pierre-alexandre.bliman@inria.fr}, Daiver Cardona-Salgado$^2$, Yves Dumont$^{3,4,5}$, Olga Vasilieva$^6$\\
$^1$ \small Sorbonne Universit\'e, Universit\'e Paris-Diderot SPC, \color{black} Inria, CNRS\\ \small Laboratoire Jacques-Louis Lions, \'equipe Mamba, \color{black}  Paris, France\\
$^2$ \small Universidad Aut\'{o}noma de Occidente, Cali, Colombia \\%\\dcardona@uao.edu.co\\
$^3$ \small CIRAD, Umr AMAP, Pretoria, South Africa \\
$^4$ \small AMAP, Univ Montpellier, CIRAD, CNRS, INRA, IRD, Montpellier, France, \\
$^5$ \small University of Pretoria, Department of Mathematics and Applied Mathematics, South Africa\\ %yves.dumont@cirad.fr\\
$^6$ \small Universidad del Valle, Cali, Colombia%\\ olga.vasilieva@correounivalle.edu.co
}
\maketitle
\begin{abstract}
In this paper, we propose a sex-structured entomological model that serves as a basis for design of control strategies relying on releases of sterile male mosquitoes (\textit{Aedes spp}) and aiming at \tcm{elimination} of the wild vector population in some target locality. We \tcm{consider different types of releases (constant and periodic impulsive), providing necessary} conditions to reach \tcm{elimination}. %For continuous SIT-control strategies we estimate the size of constant releases that ensures global convergence of the wild population dynamics towards in the mosquito-free equilibrium. 
\tcm{However,} the main part of the paper is focused on the study of the periodic impulsive control in %two separate 
\tcm{different} situations. When the size of wild mosquito population cannot be assessed in real time, we propose the so-called \textit{open-loop} control strategy that relies on periodic impulsive releases of sterile males with constant release size. Under this control mode, global convergence towards the mosquito-free equilibrium is proved on the grounds of sufficient condition that relates the size and frequency of releases. %However, this approach requires for releasing a substantial number of sterile insects, while the time needed to reach mosquito elimination is rather long. 
If periodic assessments (either synchronized with releases or more sparse) of the wild population size are available in real time, we propose the so-called \textit{closed-loop} control strategy, which is adjustable in accordance with reliable estimations of the wild population sizes. %In such an eventuality, the release sizes are taken proportional to estimations of the wild population. %Therefore, the release {\color{black} amplitude} steadily declines with the wild population size until fading away and vanishing as the system evolves towards extinction of wild mosquitoes. 
Under this control mode, global convergence to the mosquito-free equilibrium is proved on the grounds of another sufficient condition that relates not only the size and frequency of periodic releases but also the frequency of sparse measurements taken on wild populations. Finally, we propose a mixed control strategy that combines open-loop and closed-loop strategies. This control mode renders the best result, in terms of overall time needed to reach \tcm{elimination} and the number of releases to be effectively carried out during the whole release campaign, while requiring for a reasonable amount of released sterile insects.
\end{abstract}

{\bf Keywords}: Sterile Insect Technique, periodic impulsive control, open-loop and closed-loop control, global stability, exponential convergence,  {\color{black} saturated control}.

\tableofcontents

\section{Introduction}
Since decades, the control of vector-borne diseases has been a major issue in Southern countries. It recently became a major issue in Northern countries too. Indeed, the rapid expansion of air travel networks connecting regions of endemic vector-borne diseases to Northern countries, the rapid invasion and establishment of mosquitoes population, like \textit{Aedes albopictus}, in Northern hemisphere have amplified the risk of Zika, Dengue, or Chikungunya epidemics\footnote{See, for instance, the most recent distribution map of \textit{Aedes albopictus} provided by ECDC (European Centre for Disease Prevention and Control, {\scriptsize \url{https://ecdc.europa.eu/en/publications-data/aedes-albopictus-current-known-distribution-june-2018}})}.

For decades, chemical control was the main tool to control or eradicate mosquitoes. Taken into account resistance development and the impact of insecticides on the biodiversity, other alternatives have been developed, such as biological control tools, like the Sterile Insect Technique (SIT).

Sterile Insect Technique (SIT) is a promising control method that has been first studied by E. Knipling and collaborators and first experimented successfully in the early 1950's by nearly eradicating screw-worm population in Florida. Since then, SIT has been applied on different pest and disease vectors (see \cite{SIT} for an overall presentation of SIT and its applications).

The classical SIT relies on massive releases of males sterilized by ionizing radiations. However, another technique, called the {\itshape Wolbachia} technique, is under consideration because {\itshape Wolbachia} \cite{W} is a symbiotic bacterium that infects many Arthropods, including some mosquito species in nature. These bacteria have many particular properties, including one that is very useful for vector control: the cytoplasmic incompatibility (CI) property \cite{Bourtzis2008,Sinkin2004}. CI can be used for two different control strategies:
\begin{itemize}
\item
{\itshape Incompatible Insect Technique (IIT):} males infected with CI-inducing {\itshape Wolbachia} produce altered sperms that cannot successfully fertilize uninfected eggs. This can result in a progressive reduction of the target population. Thus, IIT can be seen as equivalent to classical SIT.
\item
{\itshape Population Replacement (PR):} in this case, males and females, infected with CI-inducing {\itshape Wolbachia}, are released in a susceptible (uninfected) population, such that {\itshape Wolbachia}-infected females will produce more offspring than uninfected females. Because {\itshape Wolbachia} is maternally inherited, this will result in a population replacement by {\itshape Wolbachia}-infected mosquitoes (such replacements or invasions have been observed in natural population, see \cite{rasgon2003wolbachia} for the example of Californian {\itshape Culex pipiens}). Recent studies have  {\color{black} shown} that PR may be very interesting with \textit{Aedes aegypti}, shortening their lifespan (see for instance \cite{Schraiber2012}), or more interesting, cutting down their competence for dengue virus transmission \cite{Moreira2009}. However, it is also acknowledged that {\itshape Wolbachia} infection can have fitness costs, so that the introgression of {\itshape Wolbachia} into the field can fail \cite{Schraiber2012}.
\end{itemize}
Based on these biological properties, classical SIT and IIT (see \cite{dufourd2011,dufourd2013,dumont2012,Li2017,Li2015,Strugarek2018b} and references therein) or population replacement (see \cite{Campo2017,Campo2018,Farkas2017,Farkas2010,Fen.Solving,HugBri.Modeling,nadin2017,Schraiber2012,Strugarek2018} and references therein) have been modeled and studied theoretically in a large number of  {\color{black} papers, in} order to derive results to explain the success or not of these strategies using discrete, continuous or hybrid modeling approaches, temporal and spatio-temporal models. More recently, the theory of monotone dynamical systems \cite{Smith1995} has been applied efficiently to study SIT \cite{anguelov2012,Strugarek2018b} or population replacement \cite{Bliman2017,koiller2014} systems.

In this paper, we derive and study a dynamical system to model the release and elimination process for SIT/IIT. We analyze and compare \tcm{constant continuous}/periodic impulsive releases and derive {\color{black} conditions relating the sizes and frequency of the releases that are sufficient to ensure successful elimination}. Such conditions  {\color{black} enable} the design of SIT-control strategies with constant  {\color{black} or} variable number of sterile males to be released  {\color{black} that} drive the wild population of mosquitoes towards elimination. \tcm{Among all the previous strategies, we are also able to derive the (best) strategy that needs to release the least amount of sterile males to reach elimination. This can be of utmost importance for field applications.}

The outline of the paper is as follows. In Section 2, we first develop and briefly study a simple entomological model that describes  {\color{black} the} natural evolution of mosquitoes. Then, in Section 3, we introduce a \tcm{constant continuous} SIT-control and estimate the size of constant releases that ensures global \tcm{elimination} of wild mosquitoes in the target locality. In Section 4, periodic impulsive SIT-control  {\color{black} with constant impulse amplitude is considered,} and a sufficient condition relating the size and frequency of periodic releases is derived to ensure global convergence towards the mosquito-free equilibrium. This condition enables the design of open-loop (or feedforward) strategies that ensure mosquito elimination in finite time and without assessing the size of wild mosquito population. Alternatively, Section 5 is focused on the design of closed-loop (or feedback) SIT-control strategies,  {\color{black} which are achievable when} periodic estimations (either synchronized with releases or more sparse) of the wild population size are available in real time. 
 {\color{black} In such situation, the release amplitude is computed on the basis of these measurements.}
 Thorough analysis of the feedback SIT-control implementation mode leads to another sufficient condition to reach mosquito elimination. This condition relates not only the size and frequency of periodic releases but also the frequency of sparse measurements.
{\color{black} Finally, in Section 6 we propose a mixed control strategy for periodic impulsive SIT-control.
The latter is essentially based on the use of the smallest of the release values proposed by the previous open-loop and closed-loop strategies.
It turns out that this control mode renders the best result from multiple perspectives: in terms of overall time needed to reach \tcm{elimination} and of peak-value of the input control, but also in terms of total amount of released sterile insects and of number of releases to be effectively carried out during a whole SIT-control campaign}.
 %, while requiring for a reasonable amount of released sterile insects.
 The paper ends with numerical simulations highlighting the key features and outcomes of periodic impulsive SIT-control strategies (Section 7) followed by discussion and conclusions.

\paragraph{Notations}
For any $z\in\mathbb R$, define the decomposition $z=|z|_-+|z|_+$ in negative and positive parts, fulfilling:
\begin{equation}
\label{eq453}
|z|_- := \begin{cases}
z &\text{ if } z \leq 0\\
0 &\text{otherwise}
\end{cases}\qquad\text{ and }\qquad
|z|_+: = \begin{cases}
z &\text{ if } z \geq 0\\
0 &\text{otherwise}
\end{cases}
\end{equation}

\section{A sex-structured entomological model}

We consider the following 2-dimensional system to model the dynamics of mosquito populations.
It involves two state variables, the number of males $M$ and the number of females $F$.

\begin{subequations}
\label{one}
\begin{align}[left = \empheqlbrace\,]
\label{one-a}
\dot M& = r\rho F e^{-\beta(M+F)}-\mu_M M,\\
\label{one-b}
\dot F& = (1-r)\rho F e^{-\beta(M+F)}-\mu_F F.
\end{align}
\end{subequations}

All the parameters are positive, and listed in Table \ref{table0}. The model assumes that all females are equally able to mate.
%, such that there is no impact of the ratio $\dfrac{M}{M+F}$ on the mating rate.
It includes direct and/or indirect competition effect at different stages (larvae, pupae, adults), through the parameter $\beta$.
The latter may be seen as the ratio, $\frac{\sigma}{K},$ between $\sigma$, a quantity characterizing the transition between larvae and adults under density dependence and larval competition, and a carrying capacity $K$, typically proportional to the breeding sites capacity.
The primary sex ratio in offspring is denoted $r$, and $\rho$ represents the mean number of eggs that a single female can deposit in average per day.
Last, $\mu_M$ and $\mu_F$ represent, respectively, the mean death rate of male and female adult mosquitoes.
As a rule, it is observed that the male mortality is larger, and we assume throughout the paper that:
\begin{equation}
\label{eq600}
\mu_M \geq \mu_F.
\end{equation}

\begin{table}[h]
\centering
\[
\begin{array}{|r|l|l|}
\hline
\textbf{Parameter} & \textbf{Description} & \textbf{Unit} \\ \hline
r & \text{Primary sex ratio} & -- \\ \hline
\rho & \text{Mean number of eggs deposited per female per day} & \text{day}^{-1}  \\
\mu_M, \mu_F & \text{Mean death rates for male \& female per day}& \text{day}^{-1} \\
\beta & \text{Characteristic of the competition effect per individual} & --   \\
\hline
\end{array}
\]
\caption{Parameters of the sex-structured entomological model \eqref{one}}
\label{table0}
\end{table}

Existence and uniqueness of the solutions of the Cauchy problem for dynamical system \eqref{one} follow from standard theorems, ensuring continuous differentiability of the latter. System \eqref{one} is dissipative: there exists a bounded positively invariant set $\mathcal{D}$ with the property that, for any bounded set in $E \subset \mathbb{R}_+^2$, there exists $t^*=t(\mathcal{D},E)$ such that $\big( M(0),F(0)\big)\in E$ implies $\big( M(t),F(t) \big) \in \mathcal{D}$ for all $t>t^*$. The set $\mathcal{D}$ is called an absorbing set.
In our case, it may be taken, e.g., as:
\[
\mathcal{D}=\{(M,F)\ :\ 0\leq M\leq C,0\leq F\leq C\}
\]
for some $C>0$.
\begin{rem}
Population models of the form $\dot N=B(N)N-\mu N$ for several birth rate functions, including $B(N)=e^{-\beta N}$, have been studied in {\rm\cite{Cooke99}}.
Maturation delay can also be included {\rm\cite{Cooke99}}.
\end{rem}

Obviously $E^*_0=(0,0)$ is a trivial equilibrium of system \eqref{one}, called the {\em mosquito-free equilibrium}.
Being the state to which one desires to drag the system by adequate releases of sterile insects, it will play a central role in the sequel.
Denote for future use
\begin{equation}
\label{eqq2}
\cN_F:=\dfrac{(1-r)\rho}{\mu_F },\qquad \cN_M:=\dfrac{r\rho}{\mu_M }.
%, \qquad
%\alpha := \max\{ xe^{-\beta x}\ :\ x\geq 0\} = \frac{K}{e\sigma}.
\end{equation}
These positive constants represent {\em basic offspring numbers} related to the wild female and male populations, respectively.
The first of them governs the number of equilibria, as stated by the following result, whose proof presents no difficulty and is left to the reader.

\begin{thm}[Equilibria of the entomological model]
\label{thh}
\mbox{}

\begin{itemize}
\item If $\cN_F<1$, then system \eqref{one} possesses $E^*_0$ as unique equilibrium.
\item If $\cN_F>1$, then system \eqref{one} also possesses a unique {\em positive} equilibrium $E^*:=(M^*,F^*)$, namely
\[
F^*=\dfrac{\cN_F}{\cN_F+\cN_M}\dfrac{1}{\beta} \ln \cN_F, \qquad
M^*=\dfrac{\cN_M}{\cN_F+\cN_M}\dfrac{1}{\beta} \ln \cN_F.
\]
\end{itemize}

\end{thm}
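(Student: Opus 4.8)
The plan is to compute equilibria directly by setting the right-hand sides of \eqref{one-a}--\eqref{one-b} to zero and performing an exhaustive case split according to whether $F$ vanishes. Writing the stationary system as
\begin{align*}
r\rho F e^{-\beta(M+F)} &= \mu_M M,\\
(1-r)\rho F e^{-\beta(M+F)} &= \mu_F F,
\end{align*}
I would first read off from the second equation that either $F=0$, or else $e^{-\beta(M+F)} = \mu_F/\big((1-r)\rho\big) = 1/\cN_F$. These two mutually exclusive alternatives organize the entire argument.

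In the branch $F=0$, the first equation immediately gives $\mu_M M = 0$, hence $M=0$; this recovers the mosquito-free equilibrium $E^*_0=(0,0)$, and it exists unconditionally. All the remaining content therefore sits in the branch $F\neq0$, where $e^{-\beta(M+F)}=1/\cN_F$. Since the left-hand side lies in $(0,1]$ on the admissible region $M+F\geq0$, a solution with $M+F>0$ exists precisely when $1/\cN_F<1$, that is when $\cN_F>1$. This is exactly the dichotomy in the statement: when $\cN_F<1$ no positive equilibrium can exist, so $E^*_0$ is the unique equilibrium, while for $\cN_F>1$ taking logarithms yields the total population $M+F=\tfrac{1}{\beta}\ln\cN_F$.

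To split this total into its male and female parts when $\cN_F>1$, I would substitute $e^{-\beta(M+F)}=1/\cN_F=\mu_F/\big((1-r)\rho\big)$ into the first stationary equation; using the definitions \eqref{eqq2} this collapses to the linear relation $M = (\cN_M/\cN_F)\,F$. Combining it with $M+F=\tfrac{1}{\beta}\ln\cN_F$ then determines $F$ and $M$ uniquely as the stated expressions, and both are strictly positive because $\ln\cN_F>0$. Uniqueness is automatic, since each step above is an equivalence leaving no free parameter.

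Since the computation is elementary, there is no genuine obstacle; the only point needing a little care is the threshold analysis, namely recognizing that existence of a positive equilibrium is governed entirely by whether the constant $1/\cN_F$ falls within the range of $e^{-\beta(M+F)}$, and confirming that the resulting $(M,F)$ are strictly positive—rather than merely nonnegative—exactly in the regime $\cN_F>1$.
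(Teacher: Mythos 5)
Your proof is correct and is precisely the elementary computation the paper intends: the authors state that the proof ``presents no difficulty and is left to the reader,'' and your case split on $F=0$ versus $e^{-\beta(M+F)}=1/\cN_F$, followed by the linear relation $M=(\cN_M/\cN_F)F$ and $M+F=\tfrac{1}{\beta}\ln\cN_F$, fills that gap exactly as expected. No issues.
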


Notice that the total population at the nonzero equilibrium is given by $M^*+F^*=\dfrac{1}{\beta}\ln\cN_F$.
It depends upon the basic offspring number and the competition parameter $\beta$.
As an example, mechanical control through reduction of the breeding sites induces increase of $\beta$ and consequently decrease of the population at equilibrium.
Analogously, altering biological parameters modifies the basic offspring number, and therefore the size of the population.

The stability of the equilibria is addressed by the following result.

\begin{thm}[Stability properties of the entomological model]
\label{th0}
\mbox{}

\begin{itemize}
\item If $\cN_F<1$, then the (unique) equilibrium $E^*_0$ is Globally Asymptotically Stable (GAS).
\item If $\cN_F>1$, then $E^*_0$ is unstable, and $E^*$ is GAS in $\mathcal{D}\setminus{\{(M,0), M\in \mathbb{R}_+\}}$.
\end{itemize}
\end{thm}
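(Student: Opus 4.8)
My plan is to split the statement into a routine local (spectral) part and a genuinely global (phase-plane) part, treating the regimes $\cN_F<1$ and $\cN_F>1$ in parallel, and to lean heavily on the fact that \eqref{one} is a planar system.

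First I would settle the local behaviour by linearization. The Jacobian of \eqref{one} at $E^*_0$ is upper triangular,
\[
J(E^*_0)=\begin{pmatrix} -\mu_M & r\rho\\ 0 & (1-r)\rho-\mu_F\end{pmatrix},
\]
with eigenvalues $-\mu_M<0$ and $(1-r)\rho-\mu_F=\mu_F(\cN_F-1)$; hence $E^*_0$ is a stable node when $\cN_F<1$ and a saddle when $\cN_F>1$, its stable eigendirection being the $M$-axis. At $E^*$ (for $\cN_F>1$) I would use the equilibrium identities $e^{-\beta(M^*+F^*)}=1/\cN_F$ and $r\rho F^* e^{-\beta(M^*+F^*)}=\mu_M M^*$ to reduce the Jacobian to
\[
J(E^*)=\begin{pmatrix} -\mu_M(1+\beta M^*) & \tfrac{\mu_M M^*}{F^*}(1-\beta F^*)\\[2pt] -\beta\mu_F F^* & -\beta\mu_F F^*\end{pmatrix},
\]
and check that $\operatorname{tr}J(E^*)<0$ while $\det J(E^*)=\beta\mu_M\mu_F(M^*+F^*)>0$, so $E^*$ is locally asymptotically stable. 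This already yields instability of $E^*_0$ and local stability of $E^*$; the work is in upgrading to global statements.

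The decisive global ingredient is to rule out periodic orbits by a Dulac function. The half-plane $\{F>0\}$ is forward invariant (since $\dot F=0$ on $\{F=0\}$ the axis is invariant, and uniqueness forbids crossing it), and on the invariant axis $\{F=0\}$ one has $\dot M=-\mu_M M$, so every boundary trajectory tends to $E^*_0$; in particular the global stable manifold of the saddle $E^*_0$ is exactly this axis. Now taking $B(M,F)=1/F$ on the simply connected region $\{F>0\}$ gives
\[
\frac{\partial}{\partial M}\!\left(\frac{\dot M}{F}\right)+\frac{\partial}{\partial F}\!\left(\frac{\dot F}{F}\right)=-\beta\rho\, e^{-\beta(M+F)}-\frac{\mu_M}{F}<0,
\]
the birth terms cancelling to leave a strictly negative divergence throughout $\{F>0\}$. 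By Dulac's criterion there are no closed orbits in the open upper half-plane.

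I would then invoke the Poincar\'e--Bendixson theorem: trajectories are bounded by dissipativity, so their $\omega$-limit sets are nonempty compact invariant subsets of $\mathcal D$ containing no periodic orbit, hence must consist of equilibria joined by connecting orbits. By Theorem~\ref{thh} the only equilibria are $E^*_0$ and, when $\cN_F>1$, $E^*$. A heteroclinic/homoclinic cycle is impossible: $E^*$ is a sink, so nothing leaves it, and a loop through the saddle $E^*_0$ would force an interior orbit to return to $E^*_0$ along its stable manifold $\{F=0\}$, which is unreachable from $\{F>0\}$. Thus each interior $\omega$-limit set is a single equilibrium; when $\cN_F>1$ it cannot be the saddle $E^*_0$, so it is $E^*$, giving GAS on $\mathcal D\setminus\{(M,0)\}$, and when $\cN_F<1$ it must be $E^*_0$. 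For the subcritical case I would in fact prefer a sharper elementary argument: since $e^{-\beta(M+F)}\le 1$, the female equation gives $\dot F\le \mu_F(\cN_F-1)F$, so $F(t)\to0$ exponentially, and feeding this into $\dot M\le r\rho F-\mu_M M$ forces $M(t)\to0$. The only non-routine step — and hence the main obstacle — is discovering the Dulac multiplier $1/F$; once the cancellation above is in hand, the Poincar\'e--Bendixson machinery and the saddle structure of $E^*_0$ make the remainder essentially automatic.
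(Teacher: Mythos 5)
Your proof is correct, and its skeleton coincides with the paper's: linearization at $E^*_0$ and $E^*$ (your reduced form of $J(E^*)$ with $\det J(E^*)=\beta\mu_M\mu_F(M^*+F^*)>0$ is equivalent to the paper's expression), the same Dulac multiplier $1/F$ yielding the same negative divergence $-\beta\rho e^{-\beta(M+F)}-\mu_M/F$, and Poincar\'e--Bendixson. The one place where you genuinely diverge is the step the paper labors over most: showing that no trajectory with $F(0)>0$ can have $E^*_0$ as its $\omega$-limit when $\cN_F>1$. You argue via the saddle structure --- the stable manifold of the hyperbolic saddle $E^*_0$ is one-dimensional, tangent to $(1,0)$, and must therefore coincide with the invariant $M$-axis, so the basin of $E^*_0$ is exactly $\{F=0\}$ and is unreachable from the forward-invariant open half-plane $\{F>0\}$. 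The paper instead runs an explicit dynamical estimate: it shows $\frac{d}{dt}(F/M)>(\mu_M-\mu_F)F/M$ whenever $F/M\leq(1-r)/r$, concludes that $F/M>(1-r)/r$ eventually, deduces $\liminf_{t\to\infty}F\geq\frac{1-r}{\beta}\ln\cN_F>0$, and then bounds $\dot M$ from below to keep the orbit away from the origin. Your route is shorter and more standard, invokes the stable manifold theorem as a black box, and --- notably --- does not use the hypothesis $\mu_M\geq\mu_F$ of \eqref{eq600}, which the paper's ratio argument does require; the paper's route is more elementary and self-contained and yields the quantitative persistence bound \eqref{eq801} as a by-product. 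Your explicit exclusion of heteroclinic/homoclinic cycles in the Poincar\'e--Bendixson trichotomy is also slightly more careful than the paper's statement. Both arguments are sound.
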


Figure \ref{phase_portrait-Model1} shows the convergence of all trajectories to the positive equilibrium in a case where $\cN_F>1$ (the pertinent case for the applications we have in mind).

\begin{figure}[h]
\centering
 \includegraphics[scale=0.4]{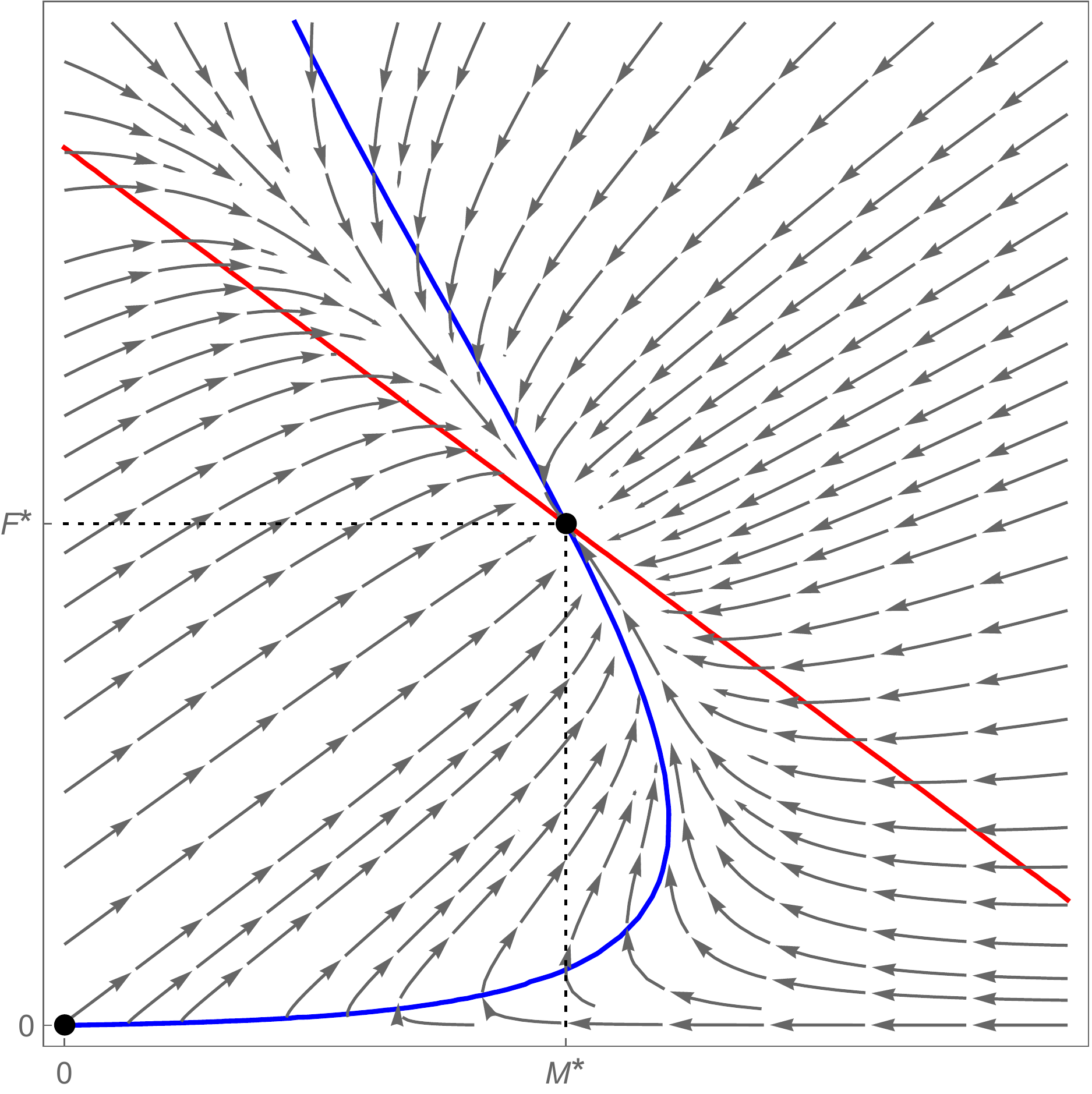}
 \caption{Phase portrait of model \eqref{one} when $\cN_F>1$.  The positive equilibrium appears at the intersection of the two curves on which $\dot F$ (in {\color{red} red}) and $\dot M$ (in {\color{blue} blue}) vanish.}
 \label{phase_portrait-Model1}
\end{figure}

\begin{proof}[Proof of Theorem \rm\ref{th0}]
\mbox{}

%\begin{itemize}
%\item
$\bullet$
Assume first $\cN_F<1$. Rewriting equation \eqref{one-b} as follows:
$$
\dot F=\left((1-r)\rho e^{-\beta(M+F)}-\mu_F \right) F \leq \big( (1-r)\rho -\mu_F \big) F
$$
one deduces that $\dot F<-\varepsilon F$ for some positive $\varepsilon$. The state variable $F$ being nonnegative, it then converges to $0$.
Using now equation \eqref{one-a}, we deduce that $M$ converges to $0$ too, and the GAS of $E^*_0$ follows.

$\bullet$
%\item
Assume now that $\cN_F>1$. Let us compute the Jacobian matrix related to entomological system \eqref{one}, page \pageref{one}:
\[
J(M,F)=\left(\begin{array}{cc}
- \beta r\rho Fe^{-\beta(M+F)}-\mu_{M} & r \rho (1-\beta F) e^{-\beta(M+F)}\\
& \\
-\beta (1-r)\rho Fe^{-\beta(M+F)} & (1-r)\rho (1-\beta F) e^{-\beta(M+F)} -\mu_{F}
\end{array}\right)
\]
{\color{black} so that}
\[ J(E_0^*)=\left(\begin{array}{cc}
-\mu_{M} & r\rho\\
0 & (1-r)\rho-\mu_{F}
\end{array}\right),
\]
from which we deduce that $E_0^*$ is locally asymptotically stable (LAS) if $\cN_F<1$.

{\color{black} For the} positive equilibrium $E^*$, using the fact that $e^{-\beta(M^*+F^*)}=\dfrac{1}{\cN_F}$, we have:
\[
J \big( E^{*} \big)=\left(\begin{array}{cc}
-\dfrac{\beta r \rho}{\cN_F}  F^{*} - \mu_{M} & \dfrac{r\rho}{\cN_F} \big(1-\beta F^{*}\big)\\
 & \\
-\dfrac{\beta (1-r)\rho }{\cN_F} F^{*} & -\dfrac{\beta (1-r)\rho }{\cN_F} F^{*}
\end{array}\right).
\]
Obviously $\trace\big\{J(E^{*})\big\}<0$ and
\[
\det J \big( E^{*} \big)=\dfrac{\beta}{\cN_F}(1-r)\rho F^{*} \left(\mu_{M}+\dfrac{r\rho}{\cN_F} \right)>0
\]
{\color{black} so that} $E^{*}$ is LAS when $\cN_F >1$.

%There are two equilibrium points, namely $E^*_0$ and $E^*$.
Using Dulac criterion \cite{Perko}, we {\color{black} now} show that system \eqref{one} has no closed orbits wholly contained in the attracting set $\cal D$.
Indeed, setting
\[
\psi_1(F):=\dfrac{1}{F},\qquad f_1(M,F):=r\rho Fe^{-\beta(M+F)}-\mu_M M,\qquad g_1(M,F):=(1-r)\rho Fe^{-\beta(M+F)}-\mu_F F,
\]
let us study the sign of the function
$$
D_1(M,F):=\dfrac{\partial }{\partial M}\Big( \psi_1(F)f_1(M,F)\Big)+\dfrac{\partial}{\partial F} \Big(\psi(F)g_1(M,F)\Big).
$$
%Choosing $\phi(M,F)=\dfrac{1}{F}$, leads to compute
We have
$$
\dfrac{\partial }{\partial M}\Big(\psi_1(F)f_1(M,F)\Big)=-\beta r\rho e^{-\beta(M+F)}-\dfrac{\mu_M}F,
\qquad
\dfrac{\partial}{\partial F} \Big(\psi_1(F)g_1(M,F)\Big)=-\beta(1-r)\rho e^{-\beta(M+F)},
$$
and thus
$$
D_1(M,F)=-\beta\rho e^{-\beta(M+F)}-\dfrac{\mu_M}F<0
$$
for all $(M,F)\in \mathcal{D}$ such that $F>0$.
Therefore, Dulac criterion \cite{Perko} applies, demonstrating that system \eqref{one} possesses no nonconstant periodic solutions.
Thus, using the fact that $E^*$ is LAS, by the Poincar\'e-Bendixson theorem, all trajectories in $\mathcal{D}\setminus\{(M,0)\ :\ M\geq 0\}$ converge  towards $E^*$.

Convergence towards $E^*_0$ clearly occurs in absence of females, i.e.\ when $F(0)=0$. (Notice that for this reason, the point $E^*_0$ cannot be repulsive.) Consider on the contrary a trajectory such that $F(0)>0$. We will show that convergence to $E^*_0$ is impossible, so convergence towards $E^*$ occurs. First of all, one deduces from \eqref{one} and the continuity of $F$ that
\[
M(t) = e^{-\mu_Mt}M(0) + r\rho \int \limits_0^t e^{-\mu_M(t-s)}F(s)e^{-\beta(M(s)+F(s))}\ ds >0
\]
for any $t>0$.
The ratio $\dfrac{F}{M}$ is therefore {\color{black} well} defined and remains positive along this trajectory. It is moreover continuously differentiable, and
\[
\frac{d}{dt}\left(
\frac{F}{M}
\right)
= \frac{F}{M}
\left(
\mu_M-\mu_F +\rho e^{-\beta(M+F)}
\left(
1-r-r\frac{F}{M}
\right)
\right)
> (\mu_M-\mu_F) \frac{F}{M}
\qquad \text{ if }\:\: \frac{F}{M}\leq \frac{1-r}{r}.
\]
From \eqref{eq600}, it is deduced immediately that there exists for this trajectory a real number $T\geq 0$, such that
\[
\forall \: t\geq T,\qquad \frac{F}{M}> \frac{1-r}{r}.
\]

Then it holds for any $t\geq T$ that
\[
\dot F = \left(
(1-r)\rho e^{-\beta(M+F)}-\mu_F
\right) F
\geq \left(
(1-r)\rho e^{-\frac{\beta}{1-r}F}-\mu_F
\right) F
\]
and thus
\begin{equation}
\label{eq801}
\liminf_{t\to +\infty} F \geq \frac{1-r}{\beta} \ln\cN_F >0.
\end{equation}

As the compact set $\mathcal{D}$ is absorbing, the trajectory is bounded.
We deduce from this and \eqref{eq801} the existence of certain $\delta>0$ and $T'\geq T$ (whose precise values depend upon the considered trajectory), such that
\[
\forall \: t\geq T',\qquad Fe^{-\beta F}\geq\delta >0.
\]

Now, we have for any $t\geq T'$
\[
\dot M \geq r\rho\delta e^{-\beta M}- \mu_M M,
\]
which is strictly positive in a neighborhood of $M=0$.
The trajectory under study therefore stays at a positive distance from the point $E^*_0$, and, being convergent, has to converge to the other equilibrium, namely $E^*$.
This shows that any trajectory departing with $F(0)>0$ converges towards $E^*$, and finally concludes the proof of Theorem \ref{th0}.
%\end{itemize}
\end{proof}

\section{Elimination with constant releases of sterile insects}

We now extend system \eqref{one}, in order to incorporate {\em continuous, constant} releases driven by an equation for $M_S$, the number of {\em sterile} males:
\begin{subequations}
\label{system_SIT}
\begin{align}[left = \empheqlbrace\,]
\label{system_SIT-a}
\dot M &=r\rho\dfrac{FM}{M+\gamma M_S}e^{-\beta(M+F)}-\mu_M M,\\
\label{system_SIT-b}
\dot F &=(1-r)\rho\dfrac{FM}{M+\gamma M_S}e^{-\beta(M+F)}-\mu_F F, \\
\label{system_SIT-c}
\dot M_S &=\Lambda-\mu_S M_S.
\end{align}
\end{subequations}

The positive constants $\mu_S$ and $\gamma$ represent, respectively, the mortality rate of sterile insects, and their relative reproductive efficiency or fitness (compared to the wild males), which is usually smaller than 1.
The {\color{black} nonnegative} quantity $\Lambda$ is the number of sterile insects released per time unit.
It is taken constant over time in the present section. The other parameters are the same as for model \eqref{one}, see Table \ref{table0}.

The mortality of the sterile males is usually larger than that of wild males, so in complement to \eqref{eq600}, we also have:
\begin{equation}
\label{mus}
\mu_S \geq \mu_M.
\end{equation}

Assuming $t$ large enough, we may suppose $M_S(t)$ at its equilibrium value $M_S^*:=\dfrac{\Lambda}{\mu_S}$, and the previous system then reduces to
\begin{subequations}
\label{eq:3}
\begin{align}[left = \empheqlbrace\,]
\label{eq:3-a}
\dot M &=r\rho\dfrac{FM}{M+\gamma M_S^*}e^{-\beta(M+F)}-\mu_M M,\\
\label{eq:3-b}
\dot F &=(1-r)\rho\dfrac{FM}{M+\gamma M_S^*}e^{-\beta(M+F)}-\mu_F F .
\end{align}
\end{subequations}
System \eqref{eq:3} is dissipative too, with all trajectories converging towards the same set $\mathcal{D}$ introduced in the previous section.
% Let's consider again $\mathcal{D}$.
It admits the same mosquito-free equilibrium $E^*_0$.

We are interested here in the issues of existence and stability of positive equilibria.
Driven by the application in view, we  assume that the mosquito population is viable (that is $\cN_F>1$, see Theorem \ref{thh}), and focus on conditions sufficient for its elimination.

\subsection{Existence of positive equilibria}

The mosquito-free equilibrium $E^*_0$ is always an equilibrium of system \eqref{eq:3}.
The following result is concerned with possible supplementary equilibria.

\begin{thm}[Existence of positive equilibria for the SIT entomological model with constant releases]
\label{tth1}
Assume $\cN_F>1$.
Then
\color{black}
\begin{itemize}
\item
there exists  $\Lambda^{crit}>0$ such that system \eqref{system_SIT} admits two positive distinct  equilibria if $0 < \Lambda < \Lambda^{crit}$, one positive equilibrium if $\Lambda = \Lambda^{crit}$, and no positive equilibrium if $\Lambda > \Lambda^{crit}$;
\item
the value of $\Lambda^{crit}$ is uniquely determined by the formula
\begin{equation}
\label{eeq5}
\Lambda^{crit} := 2 \: \frac{\mu_S}{\beta\gamma} \: \frac{\phi^{crit}(\cN_F)}{1+\frac{\cN_F}{\cN_M}},
\end{equation}
where $\phi^{crit}:=\phi^{crit}(\cN_F)$ is the unique positive solution to the equation
\begin{equation}
\label{eeq1}
1+ \phi\left(
1+ \sqrt{1+\frac{2}{\phi}}
\right)
= \cN_F \exp\left(
-\frac{2}{1+ \sqrt{1+\dfrac{2}{\phi}}}
\right).
\end{equation}
\end{itemize}
\end{thm}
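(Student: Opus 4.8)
The plan is to reduce the equilibrium conditions to a single scalar equation and then analyze it by elementary calculus. A positive equilibrium of \eqref{system_SIT} necessarily has $M_S=M_S^*=\Lambda/\mu_S$, so it is exactly a positive equilibrium $(M,F)$, $M,F>0$, of the reduced system \eqref{eq:3}. Setting the right-hand sides of \eqref{eq:3-a}--\eqref{eq:3-b} to zero and dividing out the (nonzero) factors $M$ and $F$, the quotient of the two resulting identities eliminates the common factor $\frac{1}{M+\gamma M_S^*}e^{-\beta(M+F)}$ and yields the fixed ratio $\frac{F}{M}=\frac{\cN_F}{\cN_M}$. Substituting $F=\frac{\cN_F}{\cN_M}M$ into either steady-state relation collapses the problem to counting the positive roots $M$ of $h(M)=\cN_F$, where
\[
h(M):=\frac{M+\gamma M_S^*}{M}\,\exp\!\left(\beta\Big(1+\tfrac{\cN_F}{\cN_M}\Big)M\right).
\]

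First I would establish the shape of $h$ on $(0,+\infty)$. One has $h(M)\to+\infty$ both as $M\to0^+$ (the factor $\frac1M$ blows up) and as $M\to+\infty$ (the exponential dominates). Taking the logarithmic derivative, $\frac{h'}{h}=\frac{1}{M+\gamma M_S^*}+\beta\big(1+\tfrac{\cN_F}{\cN_M}\big)-\frac1M$, and clearing denominators shows that the critical points solve a quadratic in $M$ whose constant term is negative; hence there is exactly one positive root $M_{crit}$, which is the global minimizer of $h$. Consequently $h(M)=\cN_F$ has two, one, or no positive solutions according as $h(M_{crit})<\cN_F$, $=\cN_F$, or $>\cN_F$. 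This already produces the announced trichotomy, provided the threshold is governed by a single critical value of $\Lambda$.

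Next I would show that $h(M_{crit})$ depends monotonically on $\Lambda$. Writing $y:=\beta\big(1+\tfrac{\cN_F}{\cN_M}\big)M_{crit}$, the critical-point relation lets me eliminate the exponential and obtain the compact expression $h(M_{crit})=\frac{e^{y}}{1-y}$ with $y\in(0,1)$, while the same relation fixes $y$ implicitly through the single dimensionless parameter $\phi:=\frac{\beta\gamma}{2\mu_S}\big(1+\tfrac{\cN_F}{\cN_M}\big)\Lambda$ via $y=\frac{2}{1+\sqrt{1+2/\phi}}$. Since $y\mapsto\frac{e^{y}}{1-y}$ and $\phi\mapsto y(\phi)$ are both strictly increasing, $h(M_{crit})$ increases strictly from $1$ (as $\Lambda\to0^+$, i.e.\ $\phi\to0$) to $+\infty$ (as $\Lambda\to+\infty$). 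Because $\cN_F>1$, there is therefore a unique $\Lambda^{crit}>0$ at which $h(M_{crit})=\cN_F$, and the monotonicity turns the trichotomy on $h(M_{crit})$ into the stated trichotomy on $\Lambda$.

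It remains to derive the explicit formulas. Imposing the critical condition $\frac{e^{y}}{1-y}=\cN_F$ and substituting $y=\frac{2}{1+\sqrt{1+2/\phi}}$ (so that $\frac{1}{1-y}=1+\phi\big(1+\sqrt{1+2/\phi}\big)$, as one checks with $u:=\sqrt{1+2/\phi}$, $\phi=\frac{2}{u^2-1}$) turns the critical condition into equation \eqref{eeq1}; its left-hand side is strictly increasing in $\phi$ while the right-hand side is strictly decreasing, which secures the uniqueness of the positive solution $\phi^{crit}$. Solving the definition of $\phi$ for $\Lambda$ at $\phi=\phi^{crit}$ then gives \eqref{eeq5}. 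I expect the only delicate point to be the bookkeeping in the last step --- the change of variables $M\mapsto y\mapsto\phi$ that packages the two transcendental steady-state relations into the single clean equation \eqref{eeq1} --- whereas the sign of the quadratic's constant term, the limits of $h$, and the monotonicity of $h(M_{crit})$ are routine.
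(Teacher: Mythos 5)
Your proposal is correct and follows essentially the same route as the paper: the same reduction to the fixed ratio $F/M=\cN_F/\cN_M$ and to the scalar equation $1+\gamma M_S^*/M=\cN_F e^{-\beta(1+\cN_F/\cN_M)M}$, the same quadratic arising from the critical-point (double-root) condition, and the same change of variables leading to \eqref{eeq1} and \eqref{eeq5} (the paper packages this as Lemma \ref{le0} with $f(x)=1+a/x-be^{-cx}$, proved in the Appendix). Your only genuine addition is the closed form $h(M_{crit})=e^{y}/(1-y)$ with $y=\frac{2}{1+\sqrt{1+2/\phi}}$ increasing in $\Lambda$ from $1$ to $+\infty$, which rigorously justifies the existence, uniqueness and trichotomy in $\Lambda$ where the paper merely asserts that the number of roots is non-increasing in $a$.
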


Theorem \ref{tth1} provides a characterization of the constant release rate above which no positive equilibrium may appear.
We prove in the next section (Section \ref{se32}) that in such a situation, convergence towards the mosquito-free equilibrium $E^*_0$ occurs, that ensures elimination of the wild population.

\begin{proof}[Proof of Theorem \rm\ref{tth1}]
\mbox{}

\noindent
Clearly, nullity of $M$ at equilibrium is equivalent to nullity of $F$.
In order to find possible nonzero equilibria, let $(M^*,F^*)$ with $ M^*>0, F^*>0$ be one of them.
The populations at equilibrium have to fulfill:
\begin{equation*}
%\begin{cases}
r\rho\dfrac{F^*}{M^*+\gamma M_S^*}e^{-\beta(M^*+F^*)}=\mu_M ,\qquad
(1-r)\rho\dfrac{M^*}{M^*+\gamma M_S^*}e^{-\beta(M^*+F^*)}=\mu_F .
%\end{cases}
\end{equation*}
In particular, we have, for $\cN_F, \cN_M$ defined in \eqref{eqq2},
\begin{equation}
\label{eq:SIT_equil}
\dfrac{M^*}{M^*+\gamma M_S^*}e^{-\beta(M^*+F^*)}=\dfrac{1}{\cN_F},\qquad
\dfrac{F^*}{M^*+\gamma M_S^*}e^{-\beta(M^*+F^*)}=\dfrac{1}{\cN_M},
\end{equation}
which imply the relation:
\[
\text{\ensuremath{\dfrac{F^*}{M^*}}=\ensuremath{\dfrac{\cN_F}{\cN_M}}.}
\]
Injecting this value in the first equation of \eqref{eq:SIT_equil}, the number of males $M^*$ at equilibrium has to verify the equation
\[
\dfrac{M^*}{M^*+\gamma M_S^*}e^{-\beta\left(1+\frac{\cN_F}{\cN_M}\right)M^*}=\dfrac{1}{\cN_F}
\]
or again
\begin{equation}
\label{eqq3}
1+\dfrac{\gamma M_S^*}{M^*}=\cN_Fe^{-\beta\left(1+\frac{\cN_F}{\cN_M}\right)M^*}.
\end{equation}

\noindent
The study of equation \eqref{eqq3} is done through the following result, whose proof is given in Appendix.
\begin{lem}
\label{le0}
Let $a,b,c$ be positive constants, with $b>1$.
Then the equation
\begin{equation}
\label{eeq0}
1+ \phi\left(
1+ \sqrt{1+\frac{2}{\phi}}
\right)
= b \exp\left(
-\frac{2}{1+ \sqrt{1+\dfrac{2}{\phi}}}
\right)
\end{equation}
admits a unique positive root, denoted $\phi^{crit}$.
Moreover, the equation
\begin{equation}
\label{eeq2}
f(x):= 1+\frac{a}{x} - be^{-cx}=0
\end{equation}
admits two positive distinct roots if $0<ac<2\phi^{crit}$; one positive root if $ac=2\phi^{crit}$; no positive root otherwise.
\end{lem}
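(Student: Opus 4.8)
The plan is to reduce the two-parameter root-counting problem for \eqref{eeq2} to a one-dimensional monotonicity study, and then to match the resulting critical value to the root of \eqref{eeq0}. First I would clear the denominator and rescale: setting $y:=cx$ and $\lambda:=ac$, a number $x>0$ solves $f(x)=0$ if and only if $y>0$ solves $F(y)=\lambda$, where $F(y):=y\big(be^{-y}-1\big)$. Since $x\mapsto cx$ is a bijection of $(0,+\infty)$ onto itself, counting positive roots of $f$ is equivalent to counting positive solutions of $F(y)=\lambda$, and it is this latter, cleaner problem I would analyze.

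Next I would show that $F$ is \emph{unimodal} on $(0,+\infty)$. One has $F(0)=0$, $F(y)\to-\infty$ as $y\to+\infty$, and $F'(0)=b-1>0$. Differentiating twice gives $F''(y)=be^{-y}(y-2)$, so $F'$ is strictly decreasing on $(0,2)$ and strictly increasing on $(2,+\infty)$; combined with $F'(0)>0$, $F'(2)<0$, and $F'(y)\to-1$ at infinity, this forces $F'$ to vanish at a single point $y^*\in(0,1)$, with $F$ increasing on $(0,y^*)$ and decreasing thereafter. Hence $\lambda^{crit}:=F(y^*)=\max_{y>0}F(y)>0$, and a standard graph argument yields exactly two, one, or no positive solutions of $F(y)=\lambda$ according to whether $\lambda<\lambda^{crit}$, $\lambda=\lambda^{crit}$, or $\lambda>\lambda^{crit}$. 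Reading this back through $\lambda=ac$ already produces the trichotomy; it remains only to show that $\phi^{crit}$ is well defined and that $\lambda^{crit}=2\phi^{crit}$.

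For the well-posedness of $\phi^{crit}$ I would substitute $s:=\sqrt{1+2/\phi}$, a decreasing bijection of $\phi\in(0,+\infty)$ onto $s\in(1,+\infty)$. A short computation turns \eqref{eeq0} into $\frac{s+1}{s-1}=b\,e^{-2/(s+1)}$, i.e.\ $G(s)=b$ with $G(s):=\frac{s+1}{s-1}e^{2/(s+1)}$, and from $\frac{d}{ds}\ln G(s)=-\frac{2}{s^2-1}-\frac{2}{(s+1)^2}<0$ one sees that $G$ decreases strictly from $+\infty$ to $1$. Thus $G(s)=b$ has a unique solution for every $b>1$, giving a unique positive $\phi^{crit}$.

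The crux — and the step I expect to be the main obstacle — is the exact identification $\lambda^{crit}=2\phi^{crit}$, since it forces the elementary maximum $F(y^*)$ to coincide with the prescribed transcendental equation. I would rewrite $F'(y^*)=0$ as $be^{-y^*}(1-y^*)=1$, whence $\lambda^{crit}=F(y^*)=\frac{(y^*)^2}{1-y^*}$, and then verify that $\phi:=\lambda^{crit}/2$ satisfies \eqref{eeq0}: the identity $1+\frac{2}{\phi}=\big(\frac{2-y^*}{y^*}\big)^2$ gives $1+\sqrt{1+2/\phi}=\frac{2}{y^*}$, after which both sides of \eqref{eeq0} collapse to $\frac{1}{1-y^*}=be^{-y^*}$. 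By the uniqueness established above, $\lambda^{crit}/2=\phi^{crit}$, so the trichotomy in $\lambda$ becomes the stated conditions on $ac$ relative to $2\phi^{crit}$. The delicate part throughout is the careful bookkeeping of this chain of substitutions, so that the elementary extremum is matched precisely to the transcendental equation without sign or branch errors.
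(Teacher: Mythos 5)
Your proof is correct, and it follows a genuinely different route from the paper's. The paper works directly with $f$: it asserts the shape of $f$ (``first decreasing then increasing''), argues that the number of roots is non-increasing in $a$, and then \emph{derives} equation \eqref{eeq0} by solving the double-root system $f(x^{crit})=f'(x^{crit})=0$, eliminating the exponential to get a quadratic in $1/x^{crit}$. You instead rescale to the one-parameter problem $F(y)=\lambda$ with $F(y)=y(be^{-y}-1)$ and $\lambda=ac$, prove unimodality of $F$ via $F''(y)=be^{-y}(y-2)$, read off the trichotomy from $\lambda^{crit}=\max F$, and then \emph{verify} that $\lambda^{crit}/2$ satisfies \eqref{eeq0}. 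Your version buys three things: (i) the unimodality argument is fully rigorous, whereas the paper's description of $f$ is slightly imprecise (for small $a$ the function $f$ is in fact decreasing--increasing--decreasing, and one needs the extra observation that $f>1$ on the final branch to rule out a third root — your reduction to $F$ avoids this entirely); (ii) you give an explicit proof of the uniqueness of $\phi^{crit}$ via the monotone substitution $s=\sqrt{1+2/\phi}$ and the strictly decreasing function $G$, a point the paper leaves implicit in its construction; (iii) the trichotomy in $ac$ is immediate from the graph of $F$, with no need for the paper's separate monotonicity-in-$a$ claim. What the paper's derivational approach buys in exchange is an explanation of where the transcendental equation \eqref{eeq0} comes from, whereas in your argument it appears only at the final verification step. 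All the computations in your chain of substitutions ($F'(y^*)=0\Rightarrow be^{-y^*}=\tfrac{1}{1-y^*}$, $\lambda^{crit}=\tfrac{(y^*)^2}{1-y^*}$, $1+\sqrt{1+2/\phi}=\tfrac{2}{y^*}$, both sides of \eqref{eeq0} equal to $\tfrac{1}{1-y^*}$) check out, including the sign conditions $y^*\in(0,1)$ needed for the square root and for $1-y^*>0$.
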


Using Lemma \ref{le0} with
\[
a:=\gamma M_S^*=\gamma \dfrac{\Lambda}{\mu_S},\qquad {\color{black} b:=\cN_F >1}, \qquad c:= \beta\left(
1+\frac{\cN_F}{\cN_M}
\right),
\]
one deduces that equation \eqref{eqq3} admits exactly one positive root when the root of \eqref{eeq1} is equal to $\phi^{crit}
=\frac{ac}{2}
=\frac{1}{2}\beta\gamma \left(
1+\frac{\cN_F}{\cN_M}
\right)\dfrac{\Lambda^{crit}}{\mu_S}$,
which implies \eqref{eeq5} and thus achieves the proof of Theorem \ref{tth1}.
\end{proof}

\subsection{Asymptotic stability of the equilibria}
\label{se32}
Assume $\cN_F>1$.
We first study the asymptotic stability of the mosquito-free equilibrium $E^*_0$ in the case where it is the unique equilibrium, that is when $\Lambda>\Lambda^{crit}$.

%One has the following result.
\begin{thm}[Stability of the mosquito-free equilibrium of the SIT entomological model with constant releases]
\label{thh2}
If system \eqref{system_SIT} admits no positive equilibrium (that is, if $\Lambda>\Lambda^{crit}$), then the mosquito-free equilibrium $E^*_0$ is globally exponentially stable.
\end{thm}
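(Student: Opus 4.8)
The plan is to reduce the full system \eqref{system_SIT} to the planar system \eqref{eq:3} and then to exhibit a forward-invariant cone on which the female population — and hence the whole state — decays at a uniform exponential rate. Since the $M_S$-equation is decoupled and linear, $M_S(t)\to M_S^*$ exponentially, so $M_S(t)$ enters and stays in any neighborhood of $M_S^*$ after a finite time; because the threshold condition below holds \emph{strictly}, all the subsequent estimates survive the replacement of $M_S^*$ by $M_S(t)$ with a slightly smaller gap, and I would carry out the core analysis on \eqref{eq:3}. The standing hypothesis $\Lambda>\Lambda^{crit}$ means, by Theorem \ref{tth1} and its proof, that equation \eqref{eqq3} has no positive root; since its left-hand side $1+\gamma M_S^*/M$ tends to $+\infty$ as $M\to0^+$ and to $1$ as $M\to+\infty$ while the right-hand side stays below it, one has $1+\gamma M_S^*/M>\cN_F e^{-\beta(1+\cN_F/\cN_M)M}$ for all $M>0$. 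Multiplying by $M/(M+\gamma M_S^*)$ and using continuity together with the boundary limits, I would upgrade this to the uniform gap
\[
\sup_{M>0}\ \cN_F\,\frac{M}{M+\gamma M_S^*}\,e^{-\beta\left(1+\frac{\cN_F}{\cN_M}\right)M}=1-\varepsilon_0<1
\]
for some $\varepsilon_0>0$; this quantitative gap is the single ingredient that will produce the exponential rate.

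Next I would study the ratio $u:=F/M$, which (exactly as in the proof of Theorem \ref{th0}) is well defined and positive along any trajectory issued from $F(0),M(0)>0$. A direct computation gives
\[
\frac{d}{dt}\ln\frac{F}{M}=\frac{\rho\,e^{-\beta(M+F)}}{M+\gamma M_S^*}\bigl((1-r)M-rF\bigr)+(\mu_M-\mu_F).
\]
Evaluating this on the ray $F=\frac{\cN_F}{\cN_M}M$, where $(1-r)M-rF=(1-r)\frac{\mu_F-\mu_M}{\mu_F}M$ and $M+F=(1+\frac{\cN_F}{\cN_M})M$, it collapses to $(\mu_M-\mu_F)\bigl[1-\cN_F\frac{M}{M+\gamma M_S^*}e^{-\beta(1+\cN_F/\cN_M)M}\bigr]$, which by the gap above is $\ge(\mu_M-\mu_F)\varepsilon_0\ge0$. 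Hence $\dot u\ge0$ on that ray, so the cone $\mathcal C:=\{F\ge\frac{\cN_F}{\cN_M}M\}$ is forward invariant. I would then show every trajectory enters $\mathcal C$ in finite time: below the threshold $\frac{1-r}{r}$ one has $\frac{d}{dt}\ln u\ge\mu_M-\mu_F\ge0$ (and $\cN_F/\cN_M\ge(1-r)/r$ because $\mu_M\ge\mu_F$), which pushes $u$ up to $(1-r)/r$ in finite time as in Theorem \ref{th0}. Crossing the remaining band $(1-r)/r\le u<\cN_F/\cN_M$ is the delicate point, and is where I expect the main obstacle to lie, since there $\dot F$ may be positive and $\dot u$ is not sign-definite.

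To clear the band I would argue by elimination on the $\omega$-limit set. A trajectory trapped in the band for all large $t$ has a nonempty compact invariant $\omega$-limit set; since \eqref{eq:3} has no equilibrium in the open positive quadrant, the Poincar\'e index theorem forbids any periodic orbit there, so by the Poincar\'e--Bendixson theorem the $\omega$-limit set can only be $\{E^*_0\}$. But near $E^*_0$ the quantity $\cN_F\frac{M}{M+\gamma M_S^*}e^{-\beta(M+F)}\to0$, so $\frac{d}{dt}\ln u\to\mu_M-\mu_F$; if $\mu_M>\mu_F$ this makes $u$ strictly increasing with rate bounded below along the approach to the origin, which drives $u$ above $\cN_F/\cN_M$ and contradicts confinement to the band. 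The degenerate case $\mu_M=\mu_F$, in which the band is empty because $\cN_F/\cN_M=(1-r)/r$, is handled by a direct variant of the same argument. Therefore every trajectory reaches $\mathcal C$.

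Finally, on $\mathcal C$ one has $M+F\ge(1+\frac{\cN_F}{\cN_M})M$, hence $e^{-\beta(M+F)}\le e^{-\beta(1+\cN_F/\cN_M)M}$ and
\[
\frac{\dot F}{F}=(1-r)\rho\frac{M}{M+\gamma M_S^*}e^{-\beta(M+F)}-\mu_F\le\mu_F\Bigl(\cN_F\frac{M}{M+\gamma M_S^*}e^{-\beta(1+\cN_F/\cN_M)M}-1\Bigr)\le-\mu_F\varepsilon_0<0,
\]
so $F$ decays exponentially; since $M\le\frac{\cN_M}{\cN_F}F$ on $\mathcal C$, the whole state decays at rate $\mu_F\varepsilon_0$. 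To turn this into genuine \emph{global exponential} stability I would make the entry time $T^*$ into $\mathcal C$ uniform over the compact absorbing set $\mathcal D$ by a standard compactness argument, bound the at-most-exponential growth of $F$ and $M$ during the bounded transient $[0,T^*]$ to obtain $\|(M,F)(T^*)\|\le K_1\|(M,F)(0)\|$, and combine with the exponential tail and the local exponential stability at $E^*_0$ (whose Jacobian is $\mathrm{diag}(-\mu_M,-\mu_F)$, the birth term being quadratic near the origin). The bookkeeping for this uniformization, together with the rigorous control of the $M_S(t)$ transient in the full system, is routine but is where most of the remaining work sits.
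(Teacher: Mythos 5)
Your proposal is correct in substance but follows a genuinely different route from the paper. The paper's proof is short and purely qualitative: it computes the Jacobian at $E^*_0$ (equal to $\diag\{-\mu_M,-\mu_F\}$, hence local asymptotic stability), applies the Dulac criterion with the multiplier $\psi_2(M,F)=\frac{M+\gamma M_S^*}{MF}$ to exclude closed orbits of \eqref{eq:3} in $\mathcal D$, and concludes by Poincar\'e--Bendixson that all trajectories converge to the unique equilibrium $E^*_0$. You instead extract a quantitative consequence of $\Lambda>\Lambda^{crit}$ --- the uniform gap $\sup_{M>0}\cN_F\frac{M}{M+\gamma M_S^*}e^{-\beta(1+\cN_F/\cN_M)M}=1-\varepsilon_0<1$, which is indeed valid since the left-hand side of \eqref{eqq3} stays strictly above its right-hand side and the supremum is attained --- and convert it into a forward-invariant cone $\{F\ge\frac{\cN_F}{\cN_M}M\}$ on which $\dot F/F\le-\mu_F\varepsilon_0$; your computation of $\frac{d}{dt}\ln(F/M)$ on the boundary ray is correct. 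What your approach buys is an explicit decay rate, which is closer to the ``globally exponentially stable'' wording of the statement than the paper's own argument (the paper really establishes global attractivity plus local exponential stability). What it costs is length and the residual reliance on Poincar\'e--Bendixson anyway, to dispose of trajectories trapped in the band $\frac{1-r}{r}\le F/M<\frac{\cN_F}{\cN_M}$; note that this fallback already yields convergence to $E^*_0$ on its own (a trapped trajectory converges to the origin and then the linearization takes over), so the cone is needed only for the uniform rate. The two points you flag as remaining work are real: the uniformization of the entry time into the cone over the absorbing set does not follow directly from your contradiction argument and needs a compactness/continuity step, and the $M_S(t)$ transient must be handled as you sketch (monotonicity of $M\mapsto\frac{M}{M+\gamma M_S}$ in $M_S$ makes the perturbed gap uniform in $M$). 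Neither is a gap in the qualitative conclusion, and the paper itself does not treat the uniform-exponential aspect more carefully than you do.
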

\begin{proof}[Proof of Theorem {\rm\ref{thh2}}]
The Jacobian matrix of the reduced system \eqref{eq:3} is equal to
\[ J(M,F)=
\begin{pmatrix}
\dfrac{r \rho F}{M+\gamma M_S^*}e^{-\beta(M+F)}\left(1-\beta M - \dfrac{M}{M+\gamma M_S^*} \right) - \mu_M  & \dfrac{r \rho M}{M+\gamma M_S^*} e^{-\beta(M+F)}(1-\beta F)\\
& \\
\dfrac{(1-r)\rho F}{M+\gamma M_S^*}e^{-\beta(M+F)}\left(1-\beta M-\dfrac{M}{M+\gamma M_S^*}\right) & \dfrac{(1-r)\rho M}{M+\gamma M_S^*} e^{-\beta(M+F)} \left( 1-\beta F \right) - \mu_F
\end{pmatrix}.
\]
Its value at the mosquito-free equilibrium $E^*_0$ is just $\diag\{-\mu_M;-\mu_F\}$, {\color{black} which} guarantees local asymptotic stability at this point.

We use again Dulac criterion to show that system \eqref{eq:3} has no closed orbits wholly contained in the set $\mathcal{D}$.
We set
\begin{gather*}
\psi_2(M,F):=\dfrac{M+\gamma M_S^*}{MF},\\
f_2(M,F):=r\rho \dfrac{FM}{M+\gamma M_S^*}e^{-\beta(M+F)}-\mu_M M,\qquad
g_2(M,F):=(1-r)\rho \dfrac{FM}{M+\gamma M_S^*}e^{-\beta(M+F)}-\mu_F F,
\end{gather*}
and then study the sign of the function
$$
D_2(M,F):=\dfrac{\partial }{\partial M} \Big( \psi_2(M,F)f_2(M,F) \Big) + \dfrac{\partial}{\partial F} \Big( \psi_2(M,F)g_2(M,F)\Big).
$$
As
$$
\dfrac{\partial }{\partial M} \Big(\psi_2 (M,F) f_2(M,F) \Big) = - \beta r \rho e^{-\beta(M+F)} - \frac{\mu_M}{F},\qquad
\dfrac{\partial}{\partial F} \Big( \psi_2 (M,F) g_2(M,F) \Big) = - \beta (1-r) \rho e^{-\beta(M+F)},
$$
one has
$$
D_2 (M,F) = - \beta \rho e^{-\beta(M+F)} - \frac{\mu_M}{F} <0,
$$
for all $(M,F) \in \mathcal{D}$ such that $F>0$. Thus, by the Poincar\'e-Bendixson theorem, since $E^*_0$ is the only asymptotically stable equilibrium, all trajectories in $\mathcal{D}$ approach the equilibrium $E^*_0$. This concludes the proof of Theorem \ref{thh2}.
\end{proof}

%\begin{rem}
%It is easy to show that when $\cN_F<1$, adding sterile mosquitoes does not create supplementary equilibrium nor perturb the global asymptotic stability of $E^*_0$.

On the other hand,  when $\Lambda < \Lambda^{crit}$ is not large enough and system \eqref{eq:3} admits two distinct positive equilibria $ E^*_1 <E^*_2$, one may show by studying the spectrum of the Jacobian matrices that  $E^*_0=(0,0)$ and $E^*_2=(M_2^*,F_2^*)$ are locally asymptotically stable. It is likely that this case presents bistability and that $E^*_1=(M_1^*,F_1^*)$ is unstable, with the basin of attraction of $E^*_0$ containing the interval $[0, E_1^*) := \{(M,F)\in \mathbb{R}_+^2\ :\ 0\leq M < M^*_1,\ 0\leq F < F^*_1\}$, and the basin of attraction of $E_2^*$ containing the interval $(E_2^*, \mathbf{\infty}):= \{(M,F)\in \mathbb{R}_+^2\ :\  M > M^*_2,\ F > F^*_2\}$. This is at least what is suggested by the vector field illustrating this situation presented in Figure \ref{bistability}. It is worth noting that when $\Lambda \to \Lambda^{crit}$ from below, we have $ E^*_1 \to E^*_2$ and {\color{black} the} two positive equilibria merge.
%\end{rem}

\begin{figure}[h]
\centering
 \includegraphics[scale=0.4]{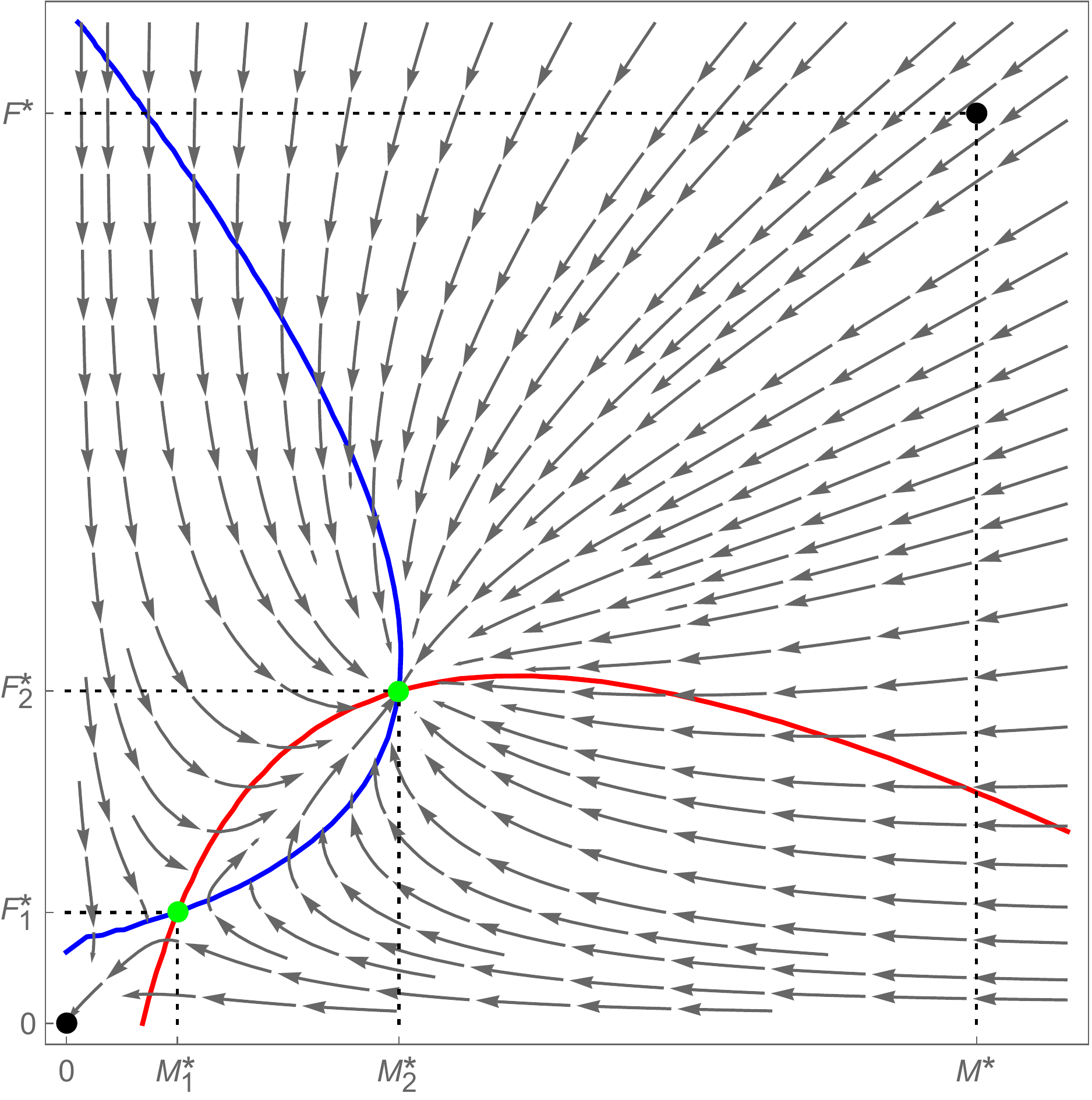}
 \caption{Trajectories of system \eqref{eq:3} related to each equilibria: bi-stable case.
 The two positive equilibria $E^*_1, E^*_2$ ({\color{green} green} points) are located at the intersection of the two curves where $\dot F$ (in {\color{red} red}) and $\dot M$ (in {\color{blue} blue}) vanish.
 The isolated black points denote the initial equilibria $E^*_0=(0,0), E^*=(M^*,F^*)$ of the system \eqref{one}, present when no control is applied.}
 \label{bistability}
\end{figure}

%% PAB
\comment{
\begin{figure}[h]
\centering
 \includegraphics[scale=0.5]{BassinOfAttraction_Cali}
 \caption{Positive invariants sets}
 \label{basins}
\end{figure}
}
%% PAB

%
\comment{
Consider now the case where system \eqref{eq:3} admits two positive equilibria $E^*_1<E^*_2$.
The Jacobian matrix of the system in a point $(M,F)$ is equal to
\[
\begin{pmatrix}
r\rho\dfrac{F}{M+\gamma M_S^*}e^{-\beta(M+F)}\left(1-\beta M-\dfrac{M}{M+\gamma M_S^*}\right)-\mu_M  & r\rho\dfrac{M}{M+\gamma M_S^*}e^{-\beta(M+F)}\left(1-\beta F\right)\\
(1-r)\rho\dfrac{F}{M+\gamma M_S^*}e^{-\beta(M+F)}\left(1-\beta M-\dfrac{M}{M+\gamma M_S^*}\right) & (1-r)\rho\dfrac{M}{M+\gamma M_S^*}e^{-\beta(M+F)}\left(1-\beta F\right)-\mu_F
\end{pmatrix}
\]
Its value at the mosquito-free equilibrium $E^*_0$ is just $\diag\{-\mu_M;-\mu_F\}$.
Its two negative real eigenvalues guarantees local asymptotic stability at this point.

Consider now the positive equilibria $E^*_i, i=1,2$.
Using the fact that in such a point
\begin{equation}
\label{eqq8}
\dfrac{M^*_i}{M^*_i+\gamma M_S^*}e^{-\beta\left(M^*_i+F^*_i\right)}=\dfrac{1}{\cN_F},\qquad
\dfrac{F^*_i}{M^*_i+\gamma M_S^*}e^{-\beta(M^*_i+F^*_i)}=\dfrac{1}{\cN_M},
\end{equation}
we deduce that the Jacobian matrix in such a point is equal to
\[
J(E^*_i)=
\begin{pmatrix}
-\dfrac{r\rho}{\cN_M}\left(\beta M^*_i+\dfrac{M^*_i}{M^*_i+\gamma M_S^*}\right) & \dfrac{r\rho}{\cN_F}\left(1-\beta F^*_i\right)\\
\dfrac{(1-r)\rho}{\cN_M}\left(1-\beta M^*_i-\dfrac{M^*_i}{M^*_i+\gamma M_S^*}\right) & -\dfrac{(1-r)\rho}{\cN_F}\beta F^*_i
\end{pmatrix}.
\]
The trace of this matrix is clearly negative.
The stability of the corresponding equilibria is thus dictated by the determinant: positivity of the determinant implies local asymptotic stability, while negativity implies instability.
One has
\begin{eqnarray*}
\det J(E^*_i)
& = &
\dfrac{(1-r)\rho}{\cN_M}\dfrac{r\rho}{\cN_F}\left(\beta F^*_i\left(\beta M^*_i+\dfrac{M^*_i}{M^*_i+\gamma M_S^*}\right)-\left(1-\beta F^*_i\right)\left(1-\beta M^*_i-\dfrac{M^*_i}{M^*_i+\gamma M_S^*}\right)\right)\\
& = &
\dfrac{(1-r)\rho}{\cN_M}\dfrac{r\rho}{\cN_F}\left(
\beta\left(F^*_i+M^*_i\right)+\dfrac{M^*_i}{M^*_i+\gamma M_S^*}-1
\right)\\
& = &
\dfrac{(1-r)\rho}{\cN_M}\dfrac{r\rho}{\cN_F}\left(
\beta \left(1+\frac{\cN_F}{\cN_M}\right)M^*_i+\dfrac{M^*_i}{M^*_i+\gamma M_S^*}-1
\right)
\end{eqnarray*}
Using the first property of the equilibria in \eqref{eqq8}, we may also write
\begin{eqnarray*}
\det J(E^*_i)
& = &
\dfrac{(1-r)\rho}{\cN_M}\dfrac{r\rho}{\cN_F}
\left(
\beta(F^*_i+M^*_i)+\frac{1}{\cN_F} e^{\beta(F^*_i+M^*_i)}-1
\right)\\
& = &
\dfrac{(1-r)\rho}{\cN_M}\dfrac{r\rho}{\cN_F}
\left(
\beta\left(1+\frac{\cN_F}{\cN_M}\right)M^*_i+\frac{1}{\cN_F} e^{\beta\left(1+\frac{\cN_F}{\cN_M}\right)M^*_i}-1
\right)
\end{eqnarray*}
It is easy to show that the function
\[
M\mapsto \beta\left(1+\frac{\cN_F}{\cN_M}\right)M+\frac{1}{\cN_F} e^{\beta\left(1+\frac{\cN_F}{\cN_M}\right)M}-1
\]
is increasing, negative for $M=0$ and positive for large values of $M$.
Its value at the point $\tilde M$ defined in \eqref{eqq5} is $1 + \frac{e^2}{\cN_F}>0$.

{\color{red}\bf
$\bullet$
When the value $M^*_2>\tilde M$ (but this is guaranteed \underline{only} in the case of Remark \ref{rem1}), $\det J(E^*_2)>0$ and the second equilibrium is locally asymptotically stable.

$\bullet$
There is no proof of the instability of $E^*_1$ (of the negativity of $\det J(E^*_1)$.
}
\end{proof}
}

\section{Elimination with periodic impulsive releases of sterile insects}
\label{se4}

We now consider {\em periodic impulsive} releases $\Lambda(t)$, modeled by the following variant of system \eqref{system_SIT}:
\begin{subequations}
\label{system_SIT_imp}
\begin{align}[left = \empheqlbrace\,]
\label{system_SIT_imp-a}
\dot M &=r\rho\dfrac{FM}{M+\gamma M_S}e^{-\beta(M+F)}-\mu_M M,\\
\label{system_SIT_imp-b}
\dot F &=(1-r)\rho\dfrac{FM}{M+\gamma M_S}e^{-\beta(M+F)}-\mu_F F,\\
\label{system_SIT_imp-c}
%\dot M_S = \tau \displaystyle\sum_{n\in\mathbb N} \Lambda_n \delta_{n\tau} -\mu_SM_S
\dot M_S &=-\mu_SM_S \: \text{ for any } \: t\in \bigcup_{n\in\mathbb{N}} \big( n\tau,(n+1)\tau \big),\\
\label{system_SIT_imp-d}
M_S(n\tau^{+}) &=\tau \Lambda_n+M_S(n\tau),\quad n=1,2,3, \ldots .
\end{align}
\end{subequations}
We choose in this section $\Lambda_n$ {\em constant}, and drop consequently the subindex $n$.
For such release schedule, it is clear that the function $M_S$ converges when $t\to +\infty$ towards the periodic solution
\begin{equation}
\label{eq601}
M_S^{\text{per}}(t)=\frac{\tau \Lambda e^{-\mu_S \left( t-\lfloor\frac{t}{\tau}\rfloor\tau \right)}}{1-e^{-\mu_S \tau}}.
\end{equation}
\comment{
In particular we have
\begin{equation}
\label{eqq11}
\dfrac{\tau \Lambda e^{-\mu_S\tau}}{1-e^{-\mu_S\tau}}=\underline{M}_s^{\text{per}}\leq M_S^{\text{per}}(t)\leq\overline{M}_s^{\text{per}}=\dfrac{\tau \Lambda}{1-e^{-\mu_S\tau}}
\end{equation}
}
We therefore introduce now the following {\em periodic system}:
\begin{subequations}
\label{eq:per}
\begin{align}[left = \empheqlbrace\,]
\label{eq:per-a}
\dot M &=r\rho\dfrac{FM}{\color{black} M+\gamma M_S^{\text{per}}(t)}e^{-\beta(M+F)}-\mu_M M,\\
\label{eq:per-b}
\dot F &=(1-r)\rho\dfrac{FM}{\color{black} M+\gamma M_S^{\text{per}}(t)}e^{-\beta(M+F)}-\mu_F F.
\end{align}
\end{subequations}
Existence and uniqueness of continuously differentiable solutions of system \eqref{eq:per} on the interval $[0,+\infty)$ may be shown by standard arguments, as well as the forward invariance of the positive orthant.
Notice that the mosquito-free equilibrium $E^*_0$ previously introduced is still an equilibrium of \eqref{eq:per}.
We are interested here in studying the conditions under which $E^*_0$ is globally asymptotically stable.
For future use, we note that the {\em mean value} of $1/M_S^{\text{per}}$ corresponding to \eqref{eq601} verifies:
\begin{equation}
\label{eqq12}
\left\langle
\dfrac{1}{M_S^{\text{per}}}
\right\rangle
:= \frac{1}{\tau} \int_{0}^{\tau}\dfrac{1}{M_S^{\text{per}}(t)}dt
=\frac{1-e^{-\mu_S\tau}}{\tau^2 \Lambda}\int_{0}^{\tau}e^{\mu_St}dt
%=\frac{\left(1-e^{-\mu_S\tau}\right)\left(e^{\mu_S\tau}-1\right)}{\mu_S\tau \Lambda}
=\frac{2\big( \cosh\left(\mu_S\tau\right)-1\big)}{\mu_S \tau^2 \Lambda}.
\end{equation}

\begin{thm}[Sufficient condition for elimination by periodic impulses]
\label{th3}
For any given $\tau>0$, assume that $\Lambda$ is chosen such that
\begin{equation}
\Lambda \geq \Lambda^{crit}_{per}= {\color{black} \frac{\cosh\left(\mu_S\tau\right)-1}{\mu_S \tau^2}} \: \frac{1}{e\beta\gamma}
\min\left\{ 2\cN_M, 2\cN_F, \max\{r,1-r\} \max\left\{ \frac{\cN_M}{r},\frac{\cN_F}{1-r} \right\} \right\}.
\label{Lambda_cond}
\end{equation}
Then every solution of system \eqref{eq:per} converges globally exponentially to the mosquito-free equilibrium $E^*_0$.
\end{thm}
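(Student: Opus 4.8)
The plan is to reduce the two–dimensional nonautonomous system \eqref{eq:per} to scalar differential inequalities with $\tau$-periodic coefficients and then conclude by a comparison argument. Write $g(M,F,t):=\dfrac{FM}{M+\gamma M_S^{\text{per}}(t)}e^{-\beta(M+F)}$, so that $\dot M=r\rho g-\mu_M M$ and $\dot F=(1-r)\rho g-\mu_F F$ share the same nonlinear term. The backbone is the elementary fact that if $z\ge0$ satisfies $\dot z\le a(t)z$ with $a$ bounded, $\tau$-periodic and $\langle a\rangle<0$, then $z(t)\le z(0)\exp\big(\int_0^t a\big)$ and, since $\int_0^t a=\langle a\rangle\,t+O(1)$, one obtains global exponential convergence $z(t)\to0$. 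Everything reduces to producing, for a suitable nonnegative functional of $(M,F)$, such an inequality whose mean coefficient is negative precisely when $\Lambda\ge\Lambda^{crit}_{per}$; here the explicit mean value $\langle 1/M_S^{\text{per}}\rangle=\dfrac{2(\cosh(\mu_S\tau)-1)}{\mu_S\tau^2\Lambda}$ from \eqref{eqq12} converts each coefficient condition into a lower bound on $\Lambda$.

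The three entries of the minimum in \eqref{Lambda_cond} correspond to three independent sufficient estimates, and since $\Lambda\ge\Lambda^{crit}_{per}$ forces $\Lambda$ above whichever entry realizes the minimum, it suffices to prove each implication separately. For the entry $2\cN_F$ I use the pointwise bound $g\le\frac{F}{e\beta\gamma M_S^{\text{per}}}$, obtained from $\frac{M e^{-\beta M}}{M+\gamma M_S^{\text{per}}}\le\frac{1}{e\beta\gamma M_S^{\text{per}}}$ together with $e^{-\beta F}\le1$; this gives $\dot F\le\big(\frac{(1-r)\rho}{e\beta\gamma M_S^{\text{per}}}-\mu_F\big)F$, whose mean coefficient is negative iff $\Lambda>\frac{\cosh(\mu_S\tau)-1}{\mu_S\tau^2}\frac{2\cN_F}{e\beta\gamma}$, i.e. $F\to0$ exponentially; then $\dot M\le r\rho F-\mu_M M$ forces $M\to0$ exponentially as well. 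The symmetric bound $g\le\frac{M}{e\beta\gamma M_S^{\text{per}}}$ gives the entry $2\cN_M$, with the roles of $M$ and $F$ exchanged.

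The delicate entry is the third one, which requires a genuinely two-dimensional estimate. I would work with the nonsmooth Lyapunov function $V:=\max\{M,F\}$ and its upper Dini derivative. The key analytic ingredient is the inequality $FM\,e^{-\beta(M+F)}\le\frac{M+F}{4e\beta}$ (from $FM\le\frac{(M+F)^2}{4}$ and $s e^{-\beta s}\le\frac1{e\beta}$), which combined with $M+F\le 2V$ gives $g\le\frac{V}{2e\beta\gamma M_S^{\text{per}}}$. Whichever coordinate realizes the maximum, one then estimates $D^+V\le\big(\frac{\max\{r,1-r\}\rho}{2e\beta\gamma M_S^{\text{per}}}-\mu_F\big)V$, the coefficient being majorized by replacing $r$ or $1-r$ by $\max\{r,1-r\}$ and $\mu_M$ by $\mu_F=\min\{\mu_M,\mu_F\}$ (using \eqref{eq600}). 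The mean of this coefficient is negative exactly when $\Lambda$ exceeds the third entry, recalling that $\max\{\cN_M/r,\cN_F/(1-r)\}=\rho/\mu_F$. Since $V\to0$ exponentially, both $M$ and $F$ vanish exponentially.

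I expect the main obstacle to be this third estimate: choosing the correct Lyapunov function, extracting the sharp constant $\frac{1}{4e\beta}$ from the AM–GM bound (a looser product bound $g\le\frac{\min\{M,F\}}{e\beta\gamma M_S^{\text{per}}}$ overshoots the threshold by a factor two), and legitimately differentiating $\max\{M,F\}$ — i.e. justifying the Dini-derivative inequality at the switching set $\{M=F\}$ and unifying the two branches into a single periodic coefficient. A secondary point is the boundary case $\Lambda=\Lambda^{crit}_{per}$, where the nominal mean coefficient vanishes; here I would exploit the strict slack in $\frac{M}{M+\gamma M_S^{\text{per}}}<1$ and $e^{-\beta(M+F)}<1$ to recover a strictly negative effective rate, and the absorbing property of $\mathcal D$ to make the convergence global and uniform.
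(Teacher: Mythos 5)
Your proposal is correct and, for two of the three branches of the minimum in \eqref{Lambda_cond}, coincides exactly with the paper's argument: the entries $2\cN_F$ and $2\cN_M$ are obtained in the paper precisely as you describe (factor out $F$, resp.\ $M$, bound the remaining factor by $\alpha/(\gamma M_S^{\text{per}})$ with $\alpha=1/(e\beta)$, and integrate the resulting scalar inequality with $\tau$-periodic coefficient, the mean value \eqref{eqq12} converting the sign condition into a lower bound on $\Lambda$; these are the conditions \eqref{ea3} and \eqref{ea10}). The only genuine divergence is the third branch. The paper works with the smooth quadratic function $\mathcal{V}=\frac12(M^2+F^2)$ of \eqref{V}, using $rM+(1-r)F\le\max\{r,1-r\}(M+F)$, then $(M+F)e^{-\beta(M+F)}\le\alpha$ and $FM\le\mathcal{V}$, which yields $\dot{\mathcal{V}}\le\bigl(\max\{r,1-r\}\rho\alpha/(\gamma M_S^{\text{per}})-2\min\{\mu_M,\mu_F\}\bigr)\mathcal{V}$; you instead take the nonsmooth $V=\max\{M,F\}$ with the AM--GM refinement $FM\le(M+F)^2/4$ and $M+F\le 2V$, which yields $D^+V\le\bigl(\max\{r,1-r\}\rho\alpha/(2\gamma M_S^{\text{per}})-\mu_F\bigr)V$. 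The two inequalities produce the identical mean-value threshold, hence the identical third entry of \eqref{Lambda_cond}; your route requires justifying the Dini-derivative comparison at the switching set $\{M=F\}$ (standard but not free), while the paper's quadratic choice keeps everything $C^1$ and trades your factor $\tfrac14$ from AM--GM for the factor $2$ in $-2\min\{\mu_M,\mu_F\}\mathcal V$ combined with $FM\le\mathcal V$. One point where you are actually more careful than the paper: the statement admits equality $\Lambda=\Lambda^{crit}_{per}$, whereas the paper's final computation establishes convergence only under strict inequality; your remark that the uniform slack in $M/(M+\gamma M_S^{\text{per}})<1$ on the absorbing set recovers a strictly negative effective rate addresses a gap the paper leaves implicit.
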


The previous result provides a simple sufficient condition for stabilization of the mosquito-free equilibrium, through an adequate choice of the amplitude of the releases, $\Lambda$, for given period $\tau$.
\begin{rem}
{\color{black}
When $r=1-r$ and $\cN_F > \cN_M$ (which is the case of the application we are interested in), the expression of $\Lambda_{per}^{crit}$ simplifies as follows:
$$
\Lambda_{per}^{crit}=\frac{2 \big(\cosh\left(\mu_S\tau\right)-1\big)}{\mu_S\tau^2} \: \frac{\cN_F}{e\beta\gamma}.
$$
The function $\tau \mapsto \dfrac{2 \cosh\left(\mu_S\tau\right)-1}{\mu_S\tau^2}$ is increasing and tends towards $\mu_S$ when $\tau\to 0$.
Making $\tau \to 0_+$, we derive the following sufficient condition for stabilization:}
$$
\Lambda_{per}^{crit} \geq \frac{ \mu_S \cN_F}{e\beta\gamma},
$$
to be compared to $\Lambda^{crit} = 2\dfrac{\mu_S}{\beta\gamma} \frac{\phi^{crit}(\cN_F)}{1+\frac{\cN_F}{\cN_M}}$ (see Theorem {\rm\ref{tth1}}).

\comment{
\color{red} It seems plausible to compare the corresponding value to $\Lambda^{crit}$, and to have for the limit:
\[
\frac{\mu_S}{e\beta\gamma}
\min\left\{
2\cN_M, 2\cN_F, \max\{r,1-r\} \max\left\{
\frac{\cN_M}{r},\frac{\cN_F}{1-r}
\right\}
\right\}
\geq \Lambda^{crit}
= 2\frac{\mu_S}{\beta\gamma}
\frac{\phi^{crit}(\cN_F)}{1+\frac{\cN_F}{\cN_M}}.
\]
where $\Lambda^{crit}$ is the critical release rate in the case of constant releases (see Theorem \ref{tth1}).

%\fbox{\sc Il semble int\'eressant de tester num\'eriquement cette assertion.}
}
\end{rem}
%\color{black}

\begin{proof}[Proof of Theorem \rm\ref{th3}]
First rewrite \eqref{eq:per} as
\begin{subequations}
\label{ea1}
\begin{gather}
\label{ea1a}
\dot M = \left(
r \rho \dfrac{F}{M+\gamma M_S^{\text{per}} }e^{-\beta (M+F)}-\mu_{M}
\right) M, \\
\label{ea1b}
\dot F = \left(
(1-r)\rho\dfrac{M}{M+\gamma M_S^{\text{per}} }e^{-\beta (M+F)}-\mu_{F}
\right) F,
\end{gather}
\end{subequations}
in order to emphasize the factorization of $M$ and $F$.
\vspace{2mm}

\noindent $\bullet$ 1.\
Notice that, for any $M,F\geq 0$ and any $t\geq 0$,
\begin{equation}
\label{ea2}
\dfrac{M}{M+\gamma M_S^{\text{per}} }e^{-\beta (M+F)}
\leq \dfrac{M}{M+\gamma M_S^{\text{per}} }e^{-\beta M}
\leq \dfrac{\alpha}{M+\gamma M_S^{\text{per}} }
\leq \dfrac{\alpha}{\gamma M_S^{\text{per}} },\qquad
\end{equation}
where we write for simplicity
\begin{equation}
\label{ea15}
\alpha := \max \big\{ xe^{-\beta x}\ :\ x\geq 0 \big\} = \frac{1}{e\beta}.
\end{equation}
One then deduces from \eqref{ea1b} that, for any $n \in \mathbb{N}$,
\[
F \big( (n+1)\tau \big) \leq e^{\left(
(1-r)\rho \dfrac{\alpha}{\gamma}
\left\langle
\dfrac{1}{M_S^{\text{per}} }
\right\rangle - \mu_F
\right)\tau} F(n\tau).
\]
Therefore, the sequence $\big\{ F(n\tau) \big\}_{n \in \mathbb N}$ decreases towards $0$, provided that
\[
(1-r)\rho \dfrac{\alpha}{\gamma}
\left\langle
\dfrac{1}{M_S^{\text{per}} }
\right\rangle < \mu_F,
\]
that is,
\begin{equation}
\label{ea3}
\left\langle
\dfrac{1}{M_S^{\text{per}} }
\right\rangle <  \dfrac{\gamma}{\alpha} \frac{\mu_F}{(1-r)\rho} =  e\beta\gamma \frac{1}{{\cal N}_F}.
\end{equation}
This is sufficient to ensure that $F$ converges towards $0$, and this induces the same behavior for $M$: condition \eqref{ea3} implies that $E^*_0$ is GAS.
\vspace{2mm}

\noindent $\bullet$ 2.\
The same argument may be conducted from \eqref{ea1a} rather than \eqref{ea1b}, leading to:
\begin{equation}
\label{ea7}
\dfrac{F}{M+\gamma M_S^{\text{per}} }e^{-\beta (M+F)}
\leq \dfrac{F}{M+\gamma M_S^{\text{per}} }e^{-\beta F}
\leq \dfrac{\alpha}{M+\gamma M_S^{\text{per}} }
\leq \dfrac{\alpha}{\gamma M_S^{\text{per}} }
\end{equation}
Global asymptotic stability is thereby guaranteed if
\begin{equation}
\label{ea10}
\left\langle
\dfrac{1}{M_S^{\text{per}} }
\right\rangle <  \frac{\gamma}{\alpha} \frac{\mu_M}{r\rho} =  e\beta\gamma \frac{1}{{\cal N}_M}.
\end{equation}
\vspace{2mm}

\noindent $\bullet$ 3.\
Define the positive definite function
\begin{equation}
\label{V}
\mathcal{V}(M,F) := \frac{1}{2}(M^2+F^2)
\end{equation}
and write its derivative along the trajectories of \eqref{eq:per} as
\begin{equation}
\label{ea11}
\dot{\mathcal{V}}
= M\dot M+F\dot F
= -\mu_MM^2-\mu_FF^2
+ \rho\dfrac{FM(rM+(1-r)F)}{M+\gamma M_S^{\text{per}} }e^{-\beta (M+F)}.
\end{equation}
On the one hand, we have
\[
-\mu_MM^2-\mu_FF^2 \leq -\min\{ \mu_M,\mu_F\} (M^2+F^2) = -2 \min\{ \mu_M,\mu_F\} \mathcal{V}.
\]
On the other hand,
\begin{eqnarray*}
\dfrac{FM(r M+(1-r)F)}{M+\gamma M_S^{\text{per}} }e^{-\beta (M+F)}
& \leq &
\max\{r,1-r\}
\dfrac{FM(M+F)}{M+\gamma M_S^{\text{per}} }e^{-\beta (M+F)}\\
& \leq &
\max\{r,1-r\} \alpha
\dfrac{FM}{M+\gamma M_S^{\text{per}} }\\
& \leq &
\max\{r,1-r\} \alpha
\dfrac{1}{M+\gamma M_S^{\text{per}} } \mathcal{V}\\
& \leq &
\max\{r,1-r\} \alpha
\dfrac{1}{\gamma M_S^{\text{per}} } \mathcal{V}.
\end{eqnarray*}
Coming back to \eqref{ea11}, we deduce that
\[
\dot{\mathcal{V}} \leq \left(\max\{r,1-r\} \alpha \dfrac{1}{\gamma M_S^{\text{per}} }-2 \min\{ \mu_M,\mu_F\}\right)\mathcal{V}.
\]
One may conclude that $E^*_0$ is GAS provided that
\[
\max\{r,1-r\} \rho \frac{\alpha}{\gamma}
\left\langle \frac{1}{M_S^{\text{per}} } \right\rangle < 2 \min\{ \mu_M,\mu_F\},
\]
that is,
\begin{equation}
\label{ea5}
\left\langle \frac{1}{M_S^{\text{per}} } \right\rangle < 2 \frac{\gamma}{\alpha} \frac{\min\{ \mu_M,\mu_F\}}{\max\{r,1-r\}\rho}
= 2 e\beta\gamma \frac{1}{\max\{r,1-r\}}
\min\left\{
\frac{r}{{\cal N_M}},\frac{1-r}{{\cal N}_F}
\right\}.
\end{equation}
\vspace{2mm}

\noindent $\bullet$ 4.\
Finally, putting together the sufficient conditions in \eqref{ea3}, \eqref{ea10} and \eqref{ea5} yields the following sufficient condition for global asymptotic stability of $E^*_0$:
\begin{eqnarray*}
\left\langle \frac{1}{M_S^{\text{per}} } \right\rangle
&  < &
e\beta\gamma \max\left\{
\frac{1}{{\cal N}_M}, \frac{1}{{\cal N}_F},
\frac{2}{\max\{r,1-r\}}
\min\left\{
\frac{r}{{\cal N}_M},\frac{1-r}{{\cal N}_F}
\right\}
\right\}.
\end{eqnarray*}

Expressing the mean value as a function of $\Lambda$ with the help of \eqref{eqq12}, one establishes that $E^*_0$ is GAS if
\begin{eqnarray*}
\Lambda
& > &
\frac{2}{e\beta\gamma} \:
\frac{\cosh(\mu_S\tau)-1}{\mu_S\tau^2} \:
\frac{1}{\max \left\{
\frac{1}{{\cal N}_M}, \frac{1}{{\cal N}_F},
\frac{2}{\max\{r,1-r\}}
\min\left\{
\frac{r}{{\cal N}_M},\frac{1-r}{{\cal N}_F}
\right\}
\right\}}\\
& = &
\frac{2}{e\beta\gamma} \:
\frac{\cosh(\mu_S\tau)-1}{\mu_S\tau^2} \:
\min\left\{
{\cal N}_M, {\cal N}_F,
\frac{\max\{r,1-r\}}{2
\min\left\{
\frac{r}{{\cal N}_M},\frac{1-r}{{\cal N}_F}
\right\}}
\right\}\\
& = &
\frac{2}{e\beta\gamma} \:
\frac{\cosh(\mu_S\tau)-1}{\mu_S\tau^2} \:
\min\left\{
{\cal N}_M, {\cal N}_F,
\frac{\max\{r,1-r\}}{2}
\max\left\{
\frac{{\cal N}_M}{r},\frac{{\cal N}_F}{1-r}
\right\}
\right\},
\end{eqnarray*}
which is exactly the formula \eqref{Lambda_cond}.
This concludes the proof of Theorem \ref{th3}.
\end{proof}
\begin{rem}
A rough {\color{black} upper bound estimate for $\Lambda^{crit}_{per}$} can be obtained using the result from the \tcm{constant} continuous release case: if $\Lambda$ is chosen such that $\Lambda>\Lambda^{crit} := 2\dfrac{\mu_S}{\beta\gamma}
\dfrac{\phi^{crit}(\cN_F)}{1+\frac{\cN_F}{\cN_M}}$, then $E^*_0$ is GAS. Thus, using a comparison principle, a sufficient condition to ensure global asymptotic stability of $E^*_0$ is to choose
\[
\underline{M}_{S}^{per}\geq\dfrac{\Lambda^{crit}}{\mu_S},
\]
where $\underline{M}_{S}^{per}=\min \limits_{t\in[0,\tau]} M_{S}^{per}(t)=\tau \Lambda \dfrac{e^{-\mu_S \tau}}{1-e^{-\mu_S \tau}}$. Thus, we derive that, for a given $\tau$, if
\begin{equation}
\ensuremath{\Lambda\geq\Lambda^{crit}\dfrac{e^{\mu_{S}\tau}-1}{\mu_S\tau},}
\label{lambda}
\end{equation}
then $E^*_0$ is GAS. When $\tau\rightarrow 0^+$, we recover the result for the \tcm{constant} continuous release (cf.\ Theorem {\rm\ref{tth1}}).
\end{rem}

\section{Elimination by feedback control}
\label{se5}

We now assume that measurements are available, providing real time estimates of the number of wild males and females $M(t), F(t)$, at least for any $t = n \tau, n \in \mathbb N$. One thus has the possibility to choose the number $\tau\Lambda_n$ of mosquitoes released at time $n\tau$ in view of this information: this is a {\em closed-loop control} option.
We study in the sequel this strategy.

%Closed-loop control can therefore be envisioned.
%One considers again the asymptotic behavior of system \eqref{system_SIT_imp}, where now the impulsion amplitude $\Lambda_n$ may be chosen for any $n\in\mathbb N$.

\subsection{Principle of the method}

The principle of the stabilization method {\color{black} that we introduce now} is based on two steps.
The first one (Section \ref{se1}) consists in solving the stabilization problem under the hypothesis that one can directly actuate on $M_S$.
The second one (Section \ref{se2}) consists in showing how to realize, through adequate choice of $\Lambda_n$, the prescribed behavior of $M_S$ defined in Step 1.
The formal statement and proof are provided later, in Section \ref{se3}.

\subsubsection{Step 1 -- Setting directly the sterile population level}
\label{se1}
We first suppose to be capable of directly controlling the quantity $M_S$. We will rely on the following key property.
\begin{prop}
\label{prop2}
Let $k$ be a real number such that
\begin{equation}
\label{eq1}
0 < k < \frac{1}{\cN_F}.
\end{equation}
Assume that
\begin{equation}
\label{eq5}
\gamma M_S(t) \geq \left(
\frac{1}{k}-1
\right) M(t),\qquad t\geq 0.
\end{equation}
Then every solution of \eqref{system_SIT-a}-\eqref{system_SIT-b} converges exponentially to $E_0^*$.
\end{prop}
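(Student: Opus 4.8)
The plan is to exploit the feedback constraint \eqref{eq5} in order to bound the recruitment fraction $\frac{M}{M+\gamma M_S}$ by $k$, and then to argue exactly as in the first bullet of the proof of Theorem~\ref{th0}: first establish exponential decay of $F$, then deduce exponential decay of $M$.

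First I would record that along any trajectory issued from the nonnegative orthant both $M$ and $F$ stay nonnegative, since \eqref{system_SIT-a}--\eqref{system_SIT-b} can be written in the factorized form $\dot M=(\cdots)M$, $\dot F=(\cdots)F$ (as already used elsewhere in the paper). This guarantees that the elementary bounds $e^{-\beta(M+F)}\leq 1$ and $F\geq 0$ are available throughout. Rewriting \eqref{eq5} as $M+\gamma M_S\geq \frac{1}{k}M$, I obtain the pointwise estimate
$$
\frac{M(t)}{M(t)+\gamma M_S(t)}\leq k,\qquad t\geq 0,
$$
which is the only consequence of the feedback hypothesis that I will use.

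Next I would insert this bound into the $F$-equation \eqref{system_SIT-b}: using $\frac{M}{M+\gamma M_S}\leq k$ together with $e^{-\beta(M+F)}\leq 1$,
$$
\dot F=\left((1-r)\rho\,\frac{M}{M+\gamma M_S}\,e^{-\beta(M+F)}-\mu_F\right)F\leq\big((1-r)\rho k-\mu_F\big)F .
$$
The hypothesis \eqref{eq1}, namely $k<\frac{1}{\cN_F}=\frac{\mu_F}{(1-r)\rho}$, makes the coefficient $(1-r)\rho k-\mu_F=:-\varepsilon$ strictly negative, so $F(t)\leq F(0)e^{-\varepsilon t}$ and $F$ converges exponentially to $0$. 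I would then close the argument on $M$ via \eqref{system_SIT-a}: the same two bounds give $\dot M\leq r\rho k\,F(t)-\mu_M M$, a linear differential inequality driven by the exponentially decaying input $r\rho k\,F(0)e^{-\varepsilon t}$. Applying the comparison lemma (equivalently, studying $w:=M e^{\mu_M t}$) yields
$$
M(t)\leq e^{-\mu_M t}M(0)+r\rho k\,F(0)\,e^{-\mu_M t}\int_0^t e^{(\mu_M-\varepsilon)s}\,ds ,
$$
whose right-hand side decays exponentially at rate $\min\{\mu_M,\varepsilon\}$ (the borderline case $\mu_M=\varepsilon$ producing an extra polynomial factor but the same conclusion). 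Hence both components tend to $0$ exponentially, and $E^*_0$ is reached exponentially.

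I do not anticipate a serious obstacle here: the statement is essentially a robust version of the decay argument already used for the uncontrolled model, the feedback serving only to clamp the recruitment fraction below $k$. The only points requiring minor care are the preservation of nonnegativity (needed to justify the sign of the dropped terms) and the bookkeeping in the comparison step when $\mu_M$ and $\varepsilon$ happen to coincide; neither is genuinely difficult.
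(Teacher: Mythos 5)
Your proposal is correct and follows essentially the same route as the paper: both reduce the dynamics via the bound $\frac{M}{M+\gamma M_S}\leq k$ and $e^{-\beta(M+F)}\leq 1$ to the triangular linear differential inequalities $\dot F\leq((1-r)\rho k-\mu_F)F$ and $\dot M\leq r\rho kF-\mu_M M$, the paper handling them as a single monotone (Metzler) comparison system while you solve them component-wise ($F$ first, then $M$ by variation of constants). Your version is, if anything, slightly more explicit about the exponential rate, including the resonant case $\mu_M=\varepsilon$.
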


%The idea behind formula \eqref{eq1} is quite natural: it amounts to imposing to the ratio $\frac{M}{M+\gamma M^*_S}$ to be smaller than a fixed $k$ such that the `apparent' basic offspring number $k\cN_F$ is smaller than 1, and consequently to render inviable the wild population.
The idea behind formula \eqref{eq1} is quite natural: it suffice to impose a fixed upper bound $k$ on the ratio $\dfrac{M}{M+\gamma M^*_S}$ in order to make the `apparent' basic offspring number {\color{black} $k\cN_F$ smaller than 1}, and consequently to render inviable the wild population.
Notice that this condition corresponds exactly to the stability of the system linearized around the origin.
It {\color{black} may be} excessively demanding for {\em large} population sizes, as it ignores the effects of competition modeled by the exponential term.
{\color{black} We shall come back to this point in Section \ref{se6} and introduce saturation.}

\begin{proof}[Proof of Proposition \rm\ref{prop2}]
From equations \eqref{system_SIT-a} and \eqref{system_SIT-b} we have
\begin{subequations}
\label{eq6}
\begin{gather}
\label{eq6a}
\dot M
= r\rho \dfrac{FM}{M+\gamma M_S}e^{-\beta(M+F)}-\mu_M M \leq r\rho\dfrac{FM}{M+\gamma M_S}-\mu_M M \leq
 -\mu_M M + r\rho k F, \\
\label{eq6b}
\dot F
= (1-r)\rho\dfrac{FM}{M+\gamma M_S}e^{-\beta(M+F)}-\mu_F F \leq ((1-r)\rho k-\mu_F )F.
\end{gather}
\end{subequations}
The linear  autonomous system
\begin{equation}
\label{eq7}
\begin{pmatrix}
\dot M' \\ \dot F'
\end{pmatrix}
= \begin{pmatrix}
-\mu_M  & r\rho k\\
0 & -\mu_F  + (1-r)\rho k
\end{pmatrix}
\begin{pmatrix}
M' \\ F'
\end{pmatrix}
\end{equation}
is {\em monotone} (it involves a Metzler matrix) and may thus serve as a comparison system for  the evolution of \eqref{system_SIT-a}-\eqref{system_SIT-b}. Thus, it is  deduced that
\[
0 \leq M(t) \leq M'(t),\qquad 0 \leq F(t) \leq F'(t),\qquad t\geq 0,
\]
where $(M',F')$ is the solution of \eqref{eq7} generated by the same initial values as the underlying solution $(M,F)$ of \eqref{system_SIT-a}-\eqref{system_SIT-b}.

On the other hand, system \eqref{eq7} is asymptotically stable when \eqref{eq1} holds. In other words, $M'(t)$ and $F'(t)$ converge to $E_0^*$ asymptotically. In consequence, $M(t)$ and $F(t)$ also converge to $E_0^*$ asymptotically when \eqref{eq1} is in force.
This achieves the proof of Proposition \ref{prop2}.
\end{proof}

\subsubsection{Step 2 -- Shaping an impulsive control compliant with Step 1}
\label{se2}

We now want to ensure that condition \eqref{eq5} is fulfilled, through an adequate choice of the impulse amplitude $\Lambda_n$.
In virtue of \eqref{system_SIT_imp-c}-\eqref{system_SIT_imp-d}, the value of $M_S$ on the interval $\big( n\tau,(n+1)\tau \big]$ is given by
\begin{equation}
\label{eq11}
M_S (t) = M_S(n\tau^+) e^{-\mu_S(t-n\tau)} = \big( \Lambda_n\tau + M_S(n\tau) \big)e^{-\mu_S(t-n\tau)},
\end{equation}
and we would like to choose $\Lambda_n$ in such a way that \eqref{eq5} stays in force.
However, instead of computing the (nonlinear) evolution of $M(t)$ on the interval $\big( n\tau,(n+1)\tau \big)$, we will impose, rather than \eqref{eq5}, the stronger condition
\begin{equation}
\label{eq9}
\gamma M_S(t) \geq \left(
\frac{1}{k}-1
\right) M'(t),\qquad t\geq 0
\end{equation}
where $M'(t)$ refers to the \emph{super-solution} of $M(t)$ introduced in the proof of Proposition \ref{prop2}.
(Notice that the conservatism introduced %when considering $M'$ rather than $M$
in this step remains reasonable when the original nonlinear system evolves in region where $\beta(M+F) \ll 1$.)
Thereby, we can solve \eqref{eq7} explicitly on $\big( n\tau,(n+1)\tau \big]$ using the following result.

\begin{lem}
\label{le1}
The solution of system \eqref{eq7} on $\big( n\tau,(n+1)\tau \big]$ with initial values $\big( M'(n\tau),F'(n\tau) \big) = \big( M(n\tau),F(n\tau) \big)$ is given by
\begin{equation}
\label{eq10}
\begin{pmatrix}
M'(t) \\ F'(t)
\end{pmatrix}
=
\begin{pmatrix}
e^{-\mu_M(t-n\tau)} & \dfrac{r\rho k}{\mu_M-\mu_F+(1-r)\rho k}
\left(
%\vphantom{\vec{A}}
e^{-(\mu_F-(1-r)\rho k)(t-n\tau)} - e^{-\mu_M(t-n\tau)}
\right)\\
0 & e^{-(\mu_F-(1-r)\rho k)(t-n\tau)}
\end{pmatrix}
\begin{pmatrix}
M(n\tau) \\ F(n\tau)
\end{pmatrix}
\end{equation}

%\begin{subequations}
%\begin{gather}
%M'(t) = M(n\tau) e^{-\mu_M(t-n\tau)}
%+ \frac{r\rho k}{\mu_M-\mu_F+(1-r)\rho k}F(n\tau)
%\left(
%\vphantom{\vec{A}}
%e^{-\mu_M(t-n\tau)} - e^{-\mu_F(1-\frac{k}{\cal N})(t-n\tau)}
%\right)\\
%F('t) = F(n\tau) e^{-\mu_F(1-\frac{k}{\cal N})(t-n\tau)}
%\end{gather}
%\end{subequations}

\end{lem}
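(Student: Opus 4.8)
The plan is to recognize that \eqref{eq7} is a linear autonomous system whose matrix $A$ is upper triangular, so the claimed formula \eqref{eq10} is simply the fundamental matrix $e^{A(t-n\tau)}$ applied to the initial data $\big(M(n\tau),F(n\tau)\big)$. Rather than computing the matrix exponential abstractly, I would exploit the triangular structure and solve the two scalar equations in cascade. To lighten notation, shift the time origin by setting $s:=t-n\tau$, so that the initial conditions read $M'(0)=M(n\tau)$, $F'(0)=F(n\tau)$.

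First I would solve the $F'$-equation, which decouples: $\dot F'=\big(-\mu_F+(1-r)\rho k\big)F'$ is a scalar linear ODE whose solution is immediately $F'(s)=e^{-(\mu_F-(1-r)\rho k)s}F(n\tau)$. This reproduces the bottom row of \eqref{eq10}.

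Next I would substitute this expression into the $M'$-equation, obtaining the scalar linear ODE $\dot M'=-\mu_M M'+r\rho k\,e^{-(\mu_F-(1-r)\rho k)s}F(n\tau)$ with exponential forcing. Applying the variation-of-constants formula (integrating factor $e^{\mu_M s}$) yields $M'(s)=e^{-\mu_M s}M(n\tau)+r\rho k\,F(n\tau)\,e^{-\mu_M s}\int_0^s e^{(\mu_M-\mu_F+(1-r)\rho k)u}\,du$. The remaining integral is elementary; writing $\lambda:=\mu_M-\mu_F+(1-r)\rho k$ it equals $(e^{\lambda s}-1)/\lambda$, and after simplifying the exponent via $-\mu_M s+\lambda s=-(\mu_F-(1-r)\rho k)s$ one recovers exactly the top row of \eqref{eq10}, with the homogeneous part $e^{-\mu_M s}M(n\tau)$ supplying the diagonal entry.

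There is essentially no obstacle here, the computation being routine. The only point requiring care — and the reason the formula takes this particular shape rather than the resonant $s\,e^{-\mu_M s}$ form — is that the denominator $\lambda=\mu_M-\mu_F+(1-r)\rho k$ must not vanish, i.e. the two eigenvalues $-\mu_M$ and $-(\mu_F-(1-r)\rho k)$ of $A$ must be distinct. This is guaranteed under the standing assumptions: $\mu_M\geq\mu_F$ by \eqref{eq600} and $(1-r)\rho k>0$ by \eqref{eq1}, so $\lambda>0$ strictly and the degenerate case never arises. Alternatively, one could dispense with the derivation altogether and simply differentiate the proposed matrix in \eqref{eq10}, check that it satisfies $\dot\Phi=A\Phi$ together with $\Phi(n\tau)=I$, and invoke uniqueness of solutions to the linear Cauchy problem.
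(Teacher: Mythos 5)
Your proof is correct. The paper explicitly leaves this lemma to the reader as routine, and your cascade computation (solving the decoupled $F'$-equation first, then applying variation of constants to the $M'$-equation, with the observation that $\mu_M-\mu_F+(1-r)\rho k>0$ rules out the resonant case) — as well as the alternative you mention of simply differentiating \eqref{eq10} and invoking uniqueness — is exactly the intended standard argument.
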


The proof of Lemma \ref{le1} presents no difficulty and is left to the reader.
%Notice that $\mu_M\geq\mu_F$ (see Table \eqref{eq600}, page \pageref{eq600}) and $(1-r)\rho k>0$, so all the components of the matrix in \eqref{eq10} are nonnegative.

All the components of the matrix in \eqref{eq10} are nonnegative provided that $\mu_F$, $\mu_M$, $\rho$ and $k$ are chosen such that $\mu_F-\mu_M-(1-r)\rho k\leq 0$. It is worthwhile to recall that $\mu_F \leq \mu_M$ (see \eqref{eq600}, page \pageref{eq600}); therefore, the former condition is verified for any positive $\rho$ and $k$.
%For instance, according to Table \eqref{tab1}, page \pageref{tab1}, $\mu_F<\mu_M$, so that the latter condition is verified whatever the values of $\rho$ and $k$.

%% PAB
\comment{
\begin{proof}
Clearly $F'$ given in \eqref{eq10} verifies
\[
\dot F' = - (\mu_F-(1-r)\rho k)F'
\]
as prescribed by \eqref{eq7}; while deriving the expression in the first line of \eqref{eq10} yields:
\begin{eqnarray*}
\dot M'
& = &
-\mu_M \left(
M'(t)  - \frac{r\rho k}{\mu_M-\mu_F+(1-r)\rho k} e^{-(\mu_F-(1-r)\rho k)(t-n\tau)} F(n\tau)
\right)\\
& &
- \frac{r\rho k(\mu_F-(1-r)\rho k)}{\mu_M-\mu_F+(1-r)\rho k}
e^{-(\mu_F-(1-r)\rho k)(t-n\tau)} F(n\tau)\\
& = &
-\mu_M M'(t) + \left(
 \frac{r\rho k(\mu_M-\mu_F+(1-r)\rho k)}{\mu_M-\mu_F+(1-r)\rho k}
\right)
e^{-(\mu_F-(1-r)\rho k)(t-n\tau)} F(n\tau)\\
& = &
-\mu_M M'(t) + r\rho k F(n\tau) e^{-(\mu_F-(1-r)\rho k)(t-n\tau)}\\
& = &
- \mu_M M'(t) + r\rho k F'(t)
\end{eqnarray*}
which also obeys \eqref{eq7}.
On the other hand, the functions in \eqref{eq10} fulfill $M'(n\tau) = M(n\tau)$ and $F'(n\tau) = F(n\tau)$: Lemma \ref{le1} is proved.
\end{proof}
}
%% PAB

We now come back to the control synthesis.
Using \eqref{eq11} and \eqref{eq10}, condition \eqref{eq9} is fulfilled provided that, on any interval $\big( n\tau, (n+1)\tau \big]$,
\begin{multline}
\label{eq24}
\gamma \big( \Lambda_n\tau + M_S(n\tau) \big)e^{-\mu_S(t-n\tau)}
= \gamma M_S(t)
\geq \left(
\frac{1}{k}-1
\right) M'(t) \\
= \frac{1-k}{k} \left(
e^{-\mu_M(t-n\tau)} M(n\tau)
+ \frac{r\rho k}{\mu_M-\mu_F+(1-r)\rho k}
\left(
%\vphantom{\vec{A}}
e^{-(\mu_F-(1-r)\rho k)(t-n\tau)} - e^{-\mu_M(t-n\tau)}
\right) F(n\tau)
\right).
\end{multline}
This condition is equivalent to
\begin{equation}
\label{eq800}
\Lambda_n\tau \geq
\frac{1-k}{\gamma k}e^{(\mu_S-\mu_M)s} \left(
 M(n\tau) + \frac{r\rho k}{\mu_M-\mu_F+(1-r)\rho k} \left(
e^{(\mu_M-\mu_F+(1-r)\rho k)s}-1\right) F(n\tau)
\right) - M_S(n\tau)
\end{equation}
for any $s \in [0,\tau]$. In virtue of the relationships \eqref{eq600} and \eqref{mus}, the right-hand side of previou inequality \eqref{eq800} is {\em increasing} in $s$. Therefore, condition \eqref{eq800} has to be checked only for $s=\tau$.
%Based on the previous formula, if we assume that $\mu_S\geq\mu_M \geq \mu_F$ (biologically, according to our current knowledge, this assumption is meaningful), the previous expression is thus {\em increasing in $s$}, and condition \eqref{eq800} has to be checked only for $s=\tau$.

%Therefore, one has demonstrated the following result.

\subsection{Stabilization result}
\label{se3}

\subsubsection{Synchronized measurements and releases}

We now state and prove the stabilization result suggested by the previous considerations.

\begin{thm}[Sufficient condition for stabilization by impulsive feedback control]
\label{th1}
Assume that {\em nonnegative} releases $\Lambda_n$, $n\in\mathbb N$, are used in accordance with the following constraint:
%\begin{equation}
%\label{eq12}
%\Lambda_n \geq
%\frac{1}{\gamma\tau}
%\left|
%\frac{1-k}{k} \left(
%M(n\tau) + \frac{r\rho k}{\mu_M-\mu_F+(1-r)\rho k} \left(
%e^{(\mu_M-\mu_F+(1-r)\rho k)\tau}-1
%\right) F(n\tau)
%\right) - M_S(n\tau)
%\right|_+
%\end{equation}
%\color{blue}
\begin{subequations}
\label{eq12}
\begin{multline}
\label{eq12a}
\Lambda_n \geq -\frac{1}{\tau} M_S(n\tau)\\
+ \frac{1}{\gamma\tau}
\left(
\frac{1-k}{k}e^{(\mu_S-\mu_M)\tau}M(n\tau)
+ \frac{r\rho (1-k)}{\mu_M-\mu_F+(1-r)\rho k} \left(
e^{(\mu_S-\mu_F+(1-r)\rho k)\tau}-e^{(\mu_S-\mu_M)\tau}
\right) F(n\tau)
\right)
\end{multline}
%\color{black}
for a given constant
\begin{equation}
\label{eq12b}
k\in \left(
0,\frac{\mu_F}{(1-r)\rho}
\right).
\end{equation}
Then every solution of system \eqref{system_SIT_imp} converges exponentially towards $E_0^*$, with a convergence rate bounded from below by a value independent of the initial condition.

If moreover
\begin{equation}
\label{eq12c}
\Lambda_n \leq
\frac{1}{\gamma\tau}
\left(
\frac{1-k}{k}e^{(\mu_S-\mu_M)\tau}M(n\tau)
+ \frac{r\rho (1-k)}{\mu_M-\mu_F+(1-r)\rho k} \left(
e^{(\mu_S-\mu_F+(1-r)\rho k)\tau}-e^{(\mu_S-\mu_M)\tau}
\right) F(n\tau)
\right)
\end{equation}
then the series of impulses $\sum \limits_{n=0}^{+\infty} \Lambda_n$ converges.
\end{subequations}
\end{thm}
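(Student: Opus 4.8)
The plan is to assemble the machinery already set up in Sections~\ref{se1} and~\ref{se2}: the first assertion amounts to checking that the feedback law \eqref{eq12a}--\eqref{eq12b} enforces the hypothesis of Proposition~\ref{prop2} along every trajectory, after which that proposition delivers the exponential convergence directly. First I would observe that the admissible range \eqref{eq12b} coincides with the range \eqref{eq1} required in Proposition~\ref{prop2}, since $\frac{\mu_F}{(1-r)\rho}=\frac{1}{\cN_F}$. Thus it only remains to show that constraint \eqref{eq12a} guarantees that \eqref{eq5} holds for all $t\ge 0$.

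To that end I would work interval by interval. On each $\big(n\tau,(n+1)\tau\big]$ the sterile population is given explicitly by \eqref{eq11}, and the super-solution $M'$ of the comparison system \eqref{eq7} by Lemma~\ref{le1}, formula \eqref{eq10}. Substituting both into the strengthened requirement \eqref{eq9} and clearing the factor $e^{-\mu_S(t-n\tau)}$ turns \eqref{eq9} into the equivalent inequality \eqref{eq800}, which must hold for every $s=t-n\tau\in[0,\tau]$. The key simplification is that the right-hand side of \eqref{eq800} is nondecreasing in $s$: this follows from \eqref{eq600} and \eqref{mus}, which make both $e^{(\mu_S-\mu_M)s}$ and $e^{(\mu_M-\mu_F+(1-r)\rho k)s}-1$ nondecreasing. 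Hence the whole family of inequalities \eqref{eq800} reduces to the single one at $s=\tau$, which is exactly \eqref{eq12a}. Since the monotone comparison gives $M(t)\le M'(t)$ on each such interval (the data agreeing at the left endpoint $t=n\tau$), enforcing \eqref{eq9} forces \eqref{eq5} throughout, and Proposition~\ref{prop2} then yields exponential convergence of $(M,F)$ to $E_0^*$. The claim that the rate is uniform in the initial condition I would read off from the proof of Proposition~\ref{prop2}: the bounding trajectory solves the autonomous system \eqref{eq7}, whose dominant eigenvalue is $-(\mu_F-(1-r)\rho k)=-\mu_F(1-k\cN_F)<0$ under \eqref{eq12b}, a quantity that does not depend on the starting point.

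For the summability statement I would combine \eqref{eq12a} with the additional upper bound \eqref{eq12c}. Together they sandwich $\Lambda_n$ and, in particular, give $0\le\Lambda_n\le\frac{1}{\gamma\tau}\big(C_1 M(n\tau)+C_2 F(n\tau)\big)$ with $C_1:=\frac{1-k}{k}e^{(\mu_S-\mu_M)\tau}$ and $C_2:=\frac{r\rho(1-k)}{\mu_M-\mu_F+(1-r)\rho k}\big(e^{(\mu_S-\mu_F+(1-r)\rho k)\tau}-e^{(\mu_S-\mu_M)\tau}\big)$, both positive. By the first part, $M(t),F(t)=O\big(e^{-\mu_F(1-k\cN_F)t}\big)$ with a rate independent of the initial data; sampling at $t=n\tau$ shows that $M(n\tau)$ and $F(n\tau)$ decay geometrically in $n$ with ratio $q:=e^{-\mu_F(1-k\cN_F)\tau}\in(0,1)$. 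Consequently $\Lambda_n$ is dominated by a convergent geometric series, and $\sum_{n\ge 0}\Lambda_n<+\infty$.

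The only genuinely delicate point is the passage from the discrete-time constraint \eqref{eq12a} to the continuous-time requirement \eqref{eq5}: one must guarantee \eqref{eq5} at every instant, not merely at the release times $n\tau$. This is precisely what the detour through the super-solution $M'$ and the monotonicity in $s$ of the right-hand side of \eqref{eq800} accomplish, so once those two ingredients are secured the remainder of the argument is routine bookkeeping.
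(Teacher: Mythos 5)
Your overall route is the same as the paper's: reduce \eqref{eq12a} to the pointwise condition \eqref{eq9} via Lemma~\ref{le1} and the monotonicity in $s$ of the right-hand side of \eqref{eq800}, pass from \eqref{eq9} to \eqref{eq5}, invoke Proposition~\ref{prop2}, and then use the uniform exponential decay together with \eqref{eq12c} to sum the series. The summability argument and the identification of the uniform rate with the dominant eigenvalue $-(\mu_F-(1-r)\rho k)=-\mu_F(1-k\cN_F)$ are both fine and match the paper.

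However, there is a genuine gap at the step you dismiss as ``the monotone comparison gives $M(t)\le M'(t)$ on each such interval (the data agreeing at the left endpoint).'' The linear system \eqref{eq7} is an upper comparison system for \eqref{system_SIT_imp-a}--\eqref{system_SIT_imp-b} \emph{only at times where} $\frac{M}{M+\gamma M_S}\le k$, i.e.\ where \eqref{eq5} already holds -- that is how \eqref{eq6} was obtained. But \eqref{eq5} is precisely what you are trying to deduce from \eqref{eq9} together with $M\le M'$. So your chain ``\eqref{eq9} plus comparison $\Rightarrow$ \eqref{eq5}'' is circular as written: you need \eqref{eq5} to justify the comparison, and the comparison to justify \eqref{eq5}. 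The paper breaks this circle with a bootstrap: first assuming \eqref{eq12a} strictly, it notes that at any $t_0$ where $M(t_0)\le M'(t_0)$ one has $\gamma M_S(t_0)>\frac{1-k}{k}M'(t_0)\ge\frac{1-k}{k}M(t_0)$, so by continuity the pointwise domination $\gamma M_S>\frac{1-k}{k}M$ persists on a small interval $(t_0,t_1)$, on which the comparison argument of Proposition~\ref{prop2} is legitimately applicable and yields strict inequalities $M<M'$, $F<F'$; iterating from $t_0=n\tau$ propagates \eqref{eq18} and hence \eqref{eq22} over the whole interval $(n\tau,(n+1)\tau]$. The non-strict case of \eqref{eq12a} is then recovered by letting $\Lambda_n$ decrease to the bound and using continuity of the flow with respect to $\Lambda_n$. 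You correctly flag that the passage from the sampled constraint to the continuous-time condition is the delicate point, but the two ingredients you credit (the super-solution and the monotonicity in $s$) do not by themselves close it; the continuity/bootstrap argument is the missing idea.
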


Implementing of the previous control law necessitates the measurement of $M(n\tau), F(n\tau)$ (or their upper estimates), and of $M_S(t)$ (or its lower estimate). A possibility to have \eqref{eq12a} fulfilled, is to ignore the population of sterile males already present at time $n\tau$ and to take simply the {\em linear} control law
\begin{equation*}
\Lambda_n
:=
\frac{1-k}{\gamma k\tau}
\begin{pmatrix} e^{(\mu_S-\mu_M)\tau} & \dfrac{r\rho k}{\mu_M-\mu_F+(1-r)\rho k} \left(
e^{(\mu_S-\mu_F+(1-r)\rho k)\tau}-e^{(\mu_S-\mu_M)\tau}
\right)
\end{pmatrix}
\begin{pmatrix}
M(n\tau) \\ F(n\tau)
\end{pmatrix}.
\end{equation*}
Notice that this expression corresponds to the value in the right-hand side of \eqref{eq12c}.

%% PAB
\comment{
An interesting issue is how to choose the gain $k$.
In this respect one has to check whether or not it is possible to take $k$ as a function $k_n$ of $n$.
In such an eventuality, it would be tempting to choose this value in order to minimize the right-hand side of \eqref{eq12a}, for given values of $M(n\tau), F(n\tau)$, or the upper estimate of the sum \eqref{eq23} of the series $\sum\Lambda_n$, proportional to the total number of sterile mosquitoes introduced.

The birth rates per capita are smaller when $M+F$ is larger, due to the exponential term $e^{-\beta(M+F)}$.
Therefore a smaller control effort is indeed needed to drag the state closer to the origin in a point where $M+F$ is not zero.
This property could be exploited for the purpose of gaining convergence with smaller input values.
A practical way to achieve this, could be to use a {\em lower} bound of $M(n\tau), F(n\tau)$ on each interval $(n\tau, (n+1)\tau]$, allowing to upper bound the exponential term in a tighter way.
}
%% PAB

\begin{proof}[Proof of Theorem \rm\ref{th1}]
When $\big( M(n\tau), F(n\tau) \big) = (0,0)$, an impulsion $\Lambda_n$ has no effect on the evolution of $(M,F)$: the origin is an equilibrium point of system \eqref{system_SIT_imp}.
% closed by any control law fulfilling \eqref{eq12} and such that .
We now consider the case $\big( M(n\tau), F(n\tau) \big) \neq (0,0)$.
\vspace{2mm}

\noindent $\bullet$ 1.\
Assume first that \eqref{eq12a} is fulfilled with a {\em strict} inequality.
By construction, one has:
%, for any $t\in I_n:= (n\tau, (n+1)\tau]$,
\begin{equation}
\label{eq13}
\forall \: t\in \big( n\tau, (n+1)\tau \big],\qquad \gamma M_S(t)> \frac{1-k}{k} M'(t)
\end{equation}
where $(M',F')$ stands for solution of \eqref{eq7} departing from $\big( M(n\tau), F(n\tau) \big)$ at time $n\tau$.

We will {\color{black} first establish that this implies:}
\begin{equation}
\label{eq18}
\forall \: t\in \big[ n\tau, (n+1)\tau \big],\qquad M(t) \leq M'(t), \quad  F(t) \leq F'(t).
\end{equation}
For this, let $t_0$ be any element of $\big[ n\tau,(n+1)\tau \big)$ such that $M(t_0) \leq M'(t_0)$, $F(t_0)\leq F'(t_0)$ {\em with at least one equality}.
%$M(t_0)=M'(t_0)$, $F(t_0)\leq F'(t_0)$.
Let us show the existence of $t_1$ such that $t_0 < t_1 <(n+1)\tau$ and
\begin{equation}
\label{eq19}
\forall \: t\in (t_0,t_1),\qquad M(t)< M'(t),\ F(t)<F'(t).
\end{equation}
Indeed, due to \eqref{eq13} and by definition of $t_0$, one has
\begin{equation*}
\gamma M_S(t_0)  > \frac{1-k}{k} M'(t_0) \geq \frac{1-k}{k} M(t_0),
\end{equation*}
where we write by convention $M_S(t_0):=M_S(n\tau^+)$ when $t_0=n\tau$.
By continuity of the functions $M(t)$ and $M_S(t)$ on the open interval $\big( n\tau, (n+1)\tau \big)$, there thus exists $t_1$ such that $t_0 < t_1 <(n+1)\tau$ and
\begin{equation*}
\forall \:  t\in (t_0,t_1),\qquad \gamma M_S(t) > \frac{1-k}{k} M(t).
\end{equation*}
In such conditions, it can be shown as in Proposition \ref{prop2} that $\big( M'(t),F'(t) \big) \geq \big( M(t),F(t) \big)$ for any $t\in (t_0,t_1)$, and even that $\big( M'(t),F'(t) \big)> \big( M(t),F(t)\big)$, because the functions defining the right-hand sides of \eqref{system_SIT_imp-a} and \eqref{system_SIT_imp-b} take on strictly smaller values than those defining the right-hand sides of \eqref{eq7}.
Therefore, for any $t_0\in \big\{ n\tau^+ \big\} \cup \big( n\tau,(n+1)\tau \big)$, there exists $t_1 > t_0$ such that \eqref{eq19} holds.

From \eqref{eq19} and the fact that $\big( M(n\tau), F(n\tau) \big) = \big( M'(n\tau), F'(n\tau) \big)$, {\color{black} one deduces that \eqref{eq19} is true for $t_1=(n+1)\tau$, and therefore that \eqref{eq18} is true.}
%Indeed, equality may hold in \eqref{eq18} only for $t=n\tau$, and possibly for $t=(n+1)\tau$.
Finally, putting together \eqref{eq13} and \eqref{eq18} yields the following key property:
\begin{equation}
\label{eq22}
\forall \: t\in \big( n\tau, (n+1)\tau \big],\qquad \gamma M_S(t) > \frac{1-k}{k} M(t).
\end{equation}
\vspace{2mm}

\noindent $\bullet$ 2.\
Assume now that \eqref{eq12a} is fulfilled (with the original non-strict inequality).
Considering values of $\Lambda_n$ converging from above towards the quantity in the right-hand side of this inequality and relying on the continuity of the flow with respect to $\Lambda_n$, yields instead of \eqref{eq22} the non-strict inequality:
\begin{equation}
\label{eq22b}
\forall t\in (n\tau, (n+1)\tau],\qquad \gamma M_S(t) \geq \frac{1-k}{k} M(t).
\end{equation}
\vspace{2mm}

\noindent $\bullet$ 3.\
Consider now the positive {\color{black} \em semidefinite} function
%\begin{subequations}
%\label{eq16}
\begin{equation}
\label{eq16a}
V(M,F) := F.
\end{equation}
In view of \eqref{eq22b}, we have that for any $t\in \big( n\tau, (n+1)\tau \big]$ it holds that
\[
\frac{M(t)}{M(t)+\gamma M_S(t)}e^{-\beta(M(t)+F(t))} \leq \frac{M(t)}{M(t)+\gamma M_S(t)} <k.
\]
Therefore,
\[
\dot F=(1-r)\rho\dfrac{FM}{M+\gamma M_S}e^{-\beta(M+F)}-\mu_F F
\leq \big( (1-r)\rho k-\mu_F \big) F.
\]
Due to \eqref{eq12b}, there exists $\varepsilon>0$ such that
\begin{equation*}
\label{eq16b}
\mu_F - (1-r)\rho k > \varepsilon
\end{equation*}
and then $\dot F \leq -\varepsilon F$.
This property ensures that $F(t)$ {\em decreases} with time, and converges exponentially towards {\color{black} 0.
It is then deduced from \eqref{system_SIT_imp-a} that $M(t)$ also converges exponentially towards 0: overall, $(M(t),F(t))$ converges towards $E_0^*$.}
\vspace{2mm}

\noindent $\bullet$ 4.\
Last, choose now $\Lambda_n$ fulfilling \eqref{eq12a} and \eqref{eq12c}.
{\color{black} From the property of exponential stability previously demonstrated, there exist $C,\varepsilon>0$} such that $M(t)<Ce^{-\varepsilon t}$ and $F(t)<Ce^{-\varepsilon t}$ for any $t\geq 0$.
{\color{black} We then deduce that}
\begin{eqnarray*}
%\label{eq17}
\Lambda_n
%& = &
%\frac{1}{\gamma\tau}
%\left|
%\frac{1-k}{k}M(n\tau)-\gamma M_S(n\tau)
%+ \frac{r\rho (1-k)}{\mu_M-\mu_F+(1-r)\rho k} \left(
%e^{(\mu_M-\mu_F+(1-r)\rho k)\tau}-1
%\right) F(n\tau)
%\right|_+\\
& \leq &
\frac{1}{\gamma\tau}
\left(
\frac{1-k}{k}e^{(\mu_S-\mu_M)\tau}M(n\tau) + \frac{r\rho (1-k)}{\mu_M-\mu_F+(1-r)\rho k} \left(
e^{(\mu_S-\mu_F+(1-r)\rho k)\tau}-e^{(\mu_S-\mu_M)\tau}
\right) F(n\tau)
\right)\\
& \leq &
\color{black}
\frac{C}{\gamma\tau}
\left(
\frac{1-k}{k}e^{(\mu_S-\mu_M)\tau} + \frac{r\rho (1-k)}{\mu_M-\mu_F+(1-r)\rho k} \left(
e^{(\mu_S-\mu_F+(1-r)\rho k)\tau}-e^{(\mu_S-\mu_M)\tau}
\right)
\right)
e^{-n\varepsilon \tau},
\end{eqnarray*}
and one gets by summation
\begin{equation*}
\label{eq23}
\color{black}
\sum_{n=0}^{+\infty} \Lambda_n
\leq \frac{C}{\gamma\tau}
\left(
\frac{1-k}{k}e^{(\mu_S-\mu_M)\tau}+ \frac{r\rho (1-k)}{\mu_M-\mu_F+(1-r)\rho k} \left(
e^{(\mu_S-\mu_F+(1-r)\rho k)\tau}-e^{(\mu_S-\mu_M)\tau}
\right)
\right)
\frac{1}{1-e^{-\varepsilon\tau}}.
\end{equation*}
This shows the convergence of the series and concludes the proof of Theorem \ref{th1}.
%% PAB
\comment{
For any $n\in\mathbb N$,
\begin{eqnarray*}
%\label{eq17}
\Lambda_n
%& = &
%\frac{1}{\gamma\tau}
%\left|
%\frac{1-k}{k}M(n\tau)-\gamma M_S(n\tau)
%+ \frac{r\rho (1-k)}{\mu_M-\mu_F+(1-r)\rho k} \left(
%e^{(\mu_M-\mu_F+(1-r)\rho k)\tau}-1
%\right) F(n\tau)
%\right|_+\\
& \leq &
\frac{1}{\gamma\tau}
\left(
\frac{1-k}{k}e^{(\mu_S-\mu_M)\tau}M(n\tau) + \frac{r\rho (1-k)}{\mu_M-\mu_F+(1-r)\rho k} \left(
e^{(\mu_S-\mu_F+(1-r)\rho k)\tau}-1
\right) F(n\tau)
\right)\\
& \leq &
\frac{1}{\gamma\tau}
\max\left\{
\frac{1-k}{\varepsilon k}e^{(\mu_S-\mu_M)\tau}; \frac{r\rho (1-k)}{\mu_M-\mu_F+(1-r)\rho k} \left(
e^{(\mu_S-\mu_F+(1-r)\rho k)\tau}-1
\right)
\right\}
V(M(n\tau),F(n\tau))\\
& \leq &
\frac{1}{\gamma\tau}
\max\left\{
\frac{1-k}{\varepsilon k}e^{(\mu_S-\mu_M)\tau}; \frac{r\rho (1-k)}{\mu_M-\mu_F+(1-r)\rho k} \left(
e^{(\mu_S-\mu_F+(1-r)\rho k)\tau}-1
\right)
\right\}
e^{-n\eta\tau}
V(M(0),F(0))
\end{eqnarray*}
by use of \eqref{eq25}.
The constant $\eta$ being positive, one thus gets by summation
\begin{equation}
\label{eq23}
\sum_{n=0}^{+\infty} \Lambda_n
\leq \frac{1}{\gamma\tau}
\max\left\{
\frac{1-k}{\varepsilon k}e^{(\mu_S-\mu_M)\tau}; \frac{r\rho (1-k)}{\mu_M-\mu_F+(1-r)\rho k} \left(
e^{(\mu_S-\mu_F+(1-r)\rho k)\tau}-1
\right)
\right\}
\frac{1}{1-e^{-\eta\tau}}
(\varepsilon M(0)+F(0))
\end{equation}
}
%% PAB
\end{proof}

\begin{rem}
Since $\Lambda_n$ is chosen nonnegative, inequality \eqref{eq12a} practically becomes
\begin{align*}
%\begin{array}{ll}
\Lambda_n \geq & \: \max \bigg\{ 0, \: -\frac{1}{\tau} M_S(n\tau) \\
 & + \: \dfrac{1}{\gamma\tau}
\left(
\frac{1-k}{k}e^{(\mu_S-\mu_M)\tau}M(n\tau)
+ \frac{r\rho (1-k)}{\mu_M-\mu_F+(1-r)\rho k} \left(
e^{(\mu_S-\mu_F+(1-r)\rho k)\tau}-e^{(\mu_S-\mu_M)\tau}
\right) F(n\tau)\right) \bigg\}.
%\end{array}
\end{align*}
It means that the release of sterile males at time $t=n\tau$ is not (really) necessary, if the sterile males population is large enough, i.e.
$$
\gamma M_S(n\tau) >
\frac{1-k}{k}e^{(\mu_S-\mu_M)\tau}M(n\tau)
+ \frac{r\rho (1-k)}{\mu_M-\mu_F+(1-r)\rho k} \left(
e^{(\mu_S-\mu_F+(1-r)\rho k)\tau}-e^{(\mu_S-\mu_M)\tau}
\right) F(n\tau).
$$
Using this result, one may avoid unnecessary releases, thereby reducing the overall cumulative number of released males and the underlying cost of SIT control.
%This result avoid unnecessary releases and thus may help to reduce both the cumulative number of released males and also to reduce the cost of the SIT control.
\end{rem}

%\begin{rem}
%Extensions of Theorem {\rm\ref{th1}} may be studied.
%For example, one could achieve releases and measurements with different, commensurate, periods.
%As an example, one may wish to reduce the important cost generated by the measurements.
%\end{rem}

\subsubsection{Sparse measurements}
{\color{black} The feedback control approach requires to assess %an estimate of
the size of mosquito population at every $t\in\tau\mathbb N$.
%From the practical standpoint, this is impossible. However, u
Using Mark-Release-Recapture (MRR) technique} \cite{Gouagna2015}, it is possible to estimate (roughly) the wild population, by completing sparse measurements with a period $p\tau$ for some $p\in\mathbb N^*:=\mathbb{N} \setminus \{0\}$, and computing the sizes of  $(p-1)$ intermediate releases using the last sampled information between two measurements.
{\color{black} However, the previous operation is long and costly, and reducing the frequency of realization of this protocol is legitimate.}

The following result adapts the control laws given in Theorem {\rm\ref{th1}} to sparse measurements.
%We here adapt in this sense the control laws given in Theorem {\rm\ref{th1}}.
% the s when the $p$ releases realized during the time interval $[np\tau, (n+1)p\tau)$, at time $np\tau, (np+1)\tau,\dots, ((n+1)p-1)\tau$, are chosen equal to the value $\Lambda_n$
%It turns out The following result

\begin{thm}[Stabilization by impulsive control with sparse measurements]
\label{th2}
Let $p\in\mathbb N^*$ and set $k$ complying with \eqref{eq12b}.
%\color{blue}
Assume that, for any $m=0,1,\dots, p-1$, {\em nonnegative} releases $\Lambda_{np+m}$, $n\in\mathbb N$, are accomplished in accordance with the following constraint:
%Assume that, for any $n\in\mathbb N$ and any $m=0,1,\dots, p-1$, releases are done at time $(np+m)\tau, (np+1)\tau,\dots, ((n+1)p-1)\tau$ with {\em nonnegative} amplitude $\Lambda_{np+m}$ obeying the inequality
\begin{subequations}
\label{eq450}
\begin{eqnarray}
\label{eq450a}
\lefteqn{\Lambda_{np+m}
\geq -\Lambda_{np+m-1} e^{-\mu_S\tau} - \cdots - \Lambda_{np} e^{-m\mu_S\tau}
-\frac{1}{\tau} M_S(np\tau) e^{-m\mu_S\tau}
} \\
& \!+\! & \frac{e^{\mu_S\tau}}{\gamma\tau}\left(
\frac{1 \!-\! k}{k} e^{-\mu_M(m+1)\tau} M(n\tau)
+ \frac{r\rho (1 \!-\!k)}{\mu_M \!-\! \mu_F+(1 \!-\! r)\rho k}
\left(
e^{-(\mu_F-(1-r)\rho k)(m+1)\tau} \!- \! e^{-\mu_M(m+1)\tau}
\right)\! F(n\tau)
\right). \nonumber
\end{eqnarray}

%\begin{multline}
%\label{eq450a}
%\Lambda_{np+m}
%\geq -\Lambda_{np+m-1} e^{-\mu_S\tau} - \dots - \Lambda_{np} e^{-m\mu_S\tau}
%-\frac{1}{\tau} M_S(np\tau) e^{-m\mu_S\tau}\\
%+ \frac{e^{\mu_S\tau}}{\gamma\tau}\left(
%\frac{1-k}{k} e^{-\mu_M(m+1)\tau} M(n\tau)
%+ \frac{r\rho (1-k)}{\mu_M-\mu_F+(1-r)\rho k}
%\left(
%e^{-(\mu_F-(1-r)\rho k)(m+1)\tau} - e^{-\mu_M(m+1)\tau}
%\right) F(n\tau)
%\right)
%\end{multline}

%\color{black}
Then every solution of system \eqref{system_SIT_imp} converges exponentially towards $E_0^*$, with a convergence speed bounded from below by a value independent of the initial condition.

If moreover
%\color{blue}
\begin{align}
\label{eq450b}
\Lambda_{np+m} & \leq
\frac{e^{\mu_S\tau}}{\gamma\tau} \bigg(
\frac{1-k}{k} e^{-\mu_M(m+1)\tau} M(n\tau) \\
& + \frac{r\rho (1-k)}{\mu_M-\mu_F+(1-r)\rho k}
\left(
e^{-(\mu_F-(1-r)\rho k)(m+1)\tau} - e^{-\mu_M(m+1)\tau}
\right) F(n\tau)
\bigg), \nonumber
\end{align}
\end{subequations}
%\color{black}
then the series of impulses $\sum \Lambda_n$ converges.
\end{thm}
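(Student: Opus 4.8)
The plan is to follow exactly the template of the proof of Theorem~\ref{th1}, the only new feature being that the super-solution must now be propagated over a whole block of $p$ consecutive intervals issued from a single measurement, instead of being reset at every $n\tau$. First I would fix a block $\big[np\tau,(n+1)p\tau\big)$ and let $(M',F')$ denote the solution of the linear comparison system \eqref{eq7} starting from the measured state at time $np\tau$ and evolving, \emph{without reset}, over the entire block. By Lemma~\ref{le1} the transition matrix of \eqref{eq7} over an elapsed time $(m+1)\tau$ is explicit, so the first component $M'\big((np+m+1)\tau\big)$ is precisely the affine combination of the measured $M$ and $F$ appearing in the right-hand side of \eqref{eq450a} (with the measurement read at $t=np\tau$).

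Next I would express $M_S$ on the sub-interval $\big((np+m)\tau,(np+m+1)\tau\big]$ in terms of the releases $\Lambda_{np},\dots,\Lambda_{np+m}$ already performed within the block and of the value $M_S(np\tau)$. Iterating \eqref{eq11} across the block, the pre-release value is $M_S\big((np+m)\tau\big)=M_S(np\tau)e^{-m\mu_S\tau}+\tau\sum_{j=0}^{m-1}\Lambda_{np+j}e^{-(m-j)\mu_S\tau}$, which accounts exactly for the negative terms in the first line of \eqref{eq450a}. Imposing the domination $\gamma M_S(t)\geq\frac{1-k}{k}M'(t)$ and solving for $\Lambda_{np+m}$ then reproduces \eqref{eq450a}, provided one checks that the binding instant on each sub-interval is its right endpoint. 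This last point follows from the same monotonicity argument as in \eqref{eq800}: on each sub-interval $M_S$ decays at rate $\mu_S$ while each mode of $M'$ decays at rate at most $\mu_M$, and the ordering $\mu_S\geq\mu_M\geq\mu_F>\mu_F-(1-r)\rho k$ (combining \eqref{eq600}, \eqref{mus} and \eqref{eq12b}) makes $M'(t)/M_S(t)$ nondecreasing in $t$, so the constraint is tightest at $t=(np+m+1)\tau$.

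Once \eqref{eq450a} holds for all $m$ in the block, I would argue as in Step~1 of the proof of Theorem~\ref{th1} that the self-referential comparison closes: the no-crossing argument built around the auxiliary times $t_0,t_1$ shows that $\gamma M_S(t)\geq\frac{1-k}{k}M'(t)$, together with $\big(M,F\big)=\big(M',F'\big)$ at $t=np\tau$, forces $M(t)\leq M'(t)$ and $F(t)\leq F'(t)$ throughout the block, hence $\gamma M_S(t)\geq\frac{1-k}{k}M(t)$ for every $t$ in the block. Since the blocks tile $[0,+\infty)$, this key inequality holds for all $t\geq 0$, and then \eqref{eq6b} yields $\dot F\leq-\varepsilon F$ with $\varepsilon:=\mu_F-(1-r)\rho k>0$ (by \eqref{eq12b}), giving exponential decay of $F$, and then of $M$ via \eqref{system_SIT_imp-a}, at a rate independent of the initial data. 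For the summability statement I would invoke the extra upper bound \eqref{eq450b}: it controls each $\Lambda_{np+m}$ by a fixed affine functional of $M(np\tau),F(np\tau)$, which decay geometrically in $n$; summing the resulting geometric series over $n$, and over the finitely many $m=0,\dots,p-1$, proves convergence of $\sum_n\Lambda_n$.

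The main obstacle I anticipate is precisely the closing of this self-referential comparison over the extended horizon. In the synchronized case of Theorem~\ref{th1} the super-solution is reset to the true state at every step, so the comparison is only exercised over a single interval; here $M'$ drifts away from the genuine state across the whole block (the true dynamics decay faster, both through competition and because the actual $M_S$ may overshoot the imposed lower bound), and one must verify that the inequality $\gamma M_S(t)\geq\frac{1-k}{k}M(t)$ used to justify $M\leq M'$ is itself a consequence of $\gamma M_S(t)\geq\frac{1-k}{k}M'(t)$, which is exactly what the no-crossing argument secures. The monotonicity verification ensuring that the right endpoints are the active constraints deserves equal care, since over a block of length $p\tau$ the single decay rate $\mu_S$ of $M_S$ must dominate both modes of the comparison system simultaneously.
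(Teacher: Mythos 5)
Your proposal follows essentially the same route as the paper's proof: propagate the linear super-solution of \eqref{eq7} from the single measurement at $np\tau$ across the whole block without reset, iterate \eqref{eq11} to express $M_S$ on each sub-interval in terms of the intervening releases (which produces exactly the negative terms in \eqref{eq450a}), use the monotonicity in $s$ guaranteed by \eqref{eq600} and \eqref{mus} to reduce the constraint to the right endpoint of each sub-interval, and close the self-referential comparison with the no-crossing argument of Theorem \ref{th1} before concluding via $\dot F\leq -\varepsilon F$ and the geometric summation for \eqref{eq450b}. The paper presents this more tersely as a ``slight adaptation'' of Theorem \ref{th1}, but the substance is identical, and your explicit attention to the block-long comparison and to the sign conditions making the constraint bind at $s=\tau$ is correct.
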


Notice that Theorem \ref{th2} represents an {\em extension} of Theorem \ref{th1}, recovered in the case $p=1, m=0$: in this case, \eqref{eq450a} boils down to \eqref{eq12a}.

\begin{proof}[Proof of Theorem \rm\ref{th2}]
The demonstration comes from a slight adaptation of the proof of Theorem \ref{th1}.
%\color{blue}
Indeed, it suffices to verify that, {\color{black} under the conditions in Theorem \ref{th2}, property \eqref{eq24} holds on the interval $(np\tau, (n+1)p\tau]$, of length $p\tau$.}
Let $m\in\{0,1,\dots, p-1\}$.
One has for any $s\in (0,\tau]$ that
%$t\in ((n+m)\tau, (n+m+1)\tau]$,
\begin{align*}
M_S \big( s+(np+m)\tau \big)
& = \Big(
\Lambda_{np+m}\tau + M_S \big( (np+m)\tau \big)
\Big)e^{-\mu_Ss}\\
& = \Big(
\Lambda_{np+m}\tau + \Lambda_{np+m-1}\tau e^{-\mu_S\tau}+ \dots + \Lambda_{np}\tau e^{-m\mu_S\tau}
+ M_S(np\tau) e^{-m\mu_S\tau}
\Big) e^{-\mu_S s}.
\end{align*}

Inequality \eqref{eq24} is thus {\color{black} true on $\big( (np+m)\tau, (np+m+1)\tau \big]$ \em if and only if} it is imposed that, for any $m\in\{0,1,\dots, p-1\}$ and any $s\in (0,\tau]$,
\begin{eqnarray}
\lefteqn{\gamma
\Big( \Lambda_{np+m}\tau + \Lambda_{np+m-1}\tau e^{-\mu_S\tau}+ \dots + \Lambda_{np}\tau e^{-m\mu_S\tau}
+ M_S(np\tau) e^{-m\mu_S\tau} \Big) e^{-\mu_Ss} } \\
\! & \!\! \geq \!\!& \frac{1\!-\!k}{k} \left(
e^{-\mu_M (s+m\tau)} M (n\tau)
+ \frac{r\rho k}{\mu_M \!-\!\mu_F+(1\!-\!r)\rho k}
\left(
e^{-(\mu_F-(1-r)\rho k)(s+m\tau)} \!-\! e^{-\mu_M(s+m\tau)}
\right) F(n\tau)
\right), \nonumber
\end{eqnarray}
that is,
\begin{eqnarray}
\label{eq45}
\lefteqn{\Lambda_{np+m}\tau e^{m\mu_S\tau} + \Lambda_{np+m-1}\tau e^{(m-1)\mu_S\tau}+ \dots + \Lambda_{np}\tau
+ M_S(np\tau) } \\
\!\! & \!\! \geq \!\! & \!\! \frac{1 \!-\!k}{\gamma k}
\left(
e^{(\mu_S-\mu_M)(s+m\tau)} M(n\tau)
+ \frac{r\rho k}{\mu_M \!-\! \mu_F+(1 \!-\! r)\rho k}
\left(
e^{(\mu_S-\mu_F+(1-r)\rho k)(s+m\tau)} \!-\! e^{(\mu_S-\mu_M)(s+m\tau)}
\right) F(n\tau)
\right). \nonumber
\end{eqnarray}

In virtue of the relationships \eqref{eq600} and \eqref{mus}, the right-hand side of \eqref{eq45} is an {\em increasing} function of $s$.
Therefore, \eqref{eq45} is more restrictive when taken at $s=\tau$.
This yields \eqref{eq450a} and shows the first part of the result.
The convergence of the series of impulses is demonstrated similarly to Theorem \ref{th1}.
\end{proof}

\begin{rem}
Again, if $\Lambda_n$ is chosen nonnegative and using the fact that the size of releases is constant between two MRR experiments, inequality \eqref{eq450a} {\color{black} leads} to the following practical choice of $\Lambda_{np+m}$:
\begin{multline}
\label{eq450practical}
\Lambda_{np+m} \geq  \max \bigg\{ 0, -\Lambda_{np} e^{-\mu_S\tau} - \dots - \Lambda_{np} e^{-m\mu_S\tau}
 +\frac{e^{\mu_S\tau}}{\gamma\tau} \Big( \frac{1-k}{k} e^{-\mu_M(m+1)\tau} M(n\tau) \\
+ \frac{r\rho (1-k)}{\mu_M-\mu_F+(1-r)\rho k} \left(
e^{-(\mu_F-(1-r)\rho k)(m+1)\tau} - e^{-\mu_M(m+1)\tau}
\right) F(n\tau) \Big) \bigg\}.
\end{multline}
\end{rem}

\section{Mixed control strategies}
\label{se6}
The results obtained in the previous sections for open-loop and closed-loop SIT control allow us to compare several SIT release strategies.
Here, we consider only periodic impulsive control, which is more realistic than continuous control.

The open-loop approach (developed in Section \ref{se4}), is based on the determination of a sufficient size of sterile males to be released, in order to eradicate the wild population. This choice is made according to \eqref{Lambda_cond}.
Under this approach, even though the previous formula is `tight', the same amount of sterile insects is used during the whole release campaign.

On the contrary, the closed-loop control approach (exposed in Section \ref{se5}) is based on estimates of the wild population and thereby it enables fitting the release sizes. As evidenced by \eqref{eq12a}, under this approach the released volume is essentially chosen as {\em proportional} to the measured population. However, this condition is certainly too demanding for large values of $M,F$ (see the comments preceding Lemma \ref{le1}).
Taking advantage of the apparent complementarity of the two approaches, we propose here {\em mixed impulsive control strategies}, combining the two previous modes.
They gather the advantages of both approaches, guaranteeing convergence to the mosquito-free equilibrium with releases that remain bounded (like the periodic impulsive control strategies, Section \ref{se4}) and vanishing with the wild population (like the feedback control strategies, Section \ref{se5}).

%{\color{red} [On pourrait ici illustrer l'id\'ee par une figure, avec en abscisse la ``population'' (on peut pour cela prendre par exemple $M$) et en ordonn\'ee la valeur de $\Lambda$ correspondante (en prenant par exemple la valeur de $F$ v\'erifiant la contrainte \`a l'\'equilibre: $F/M = \cN_F/\cN_M$).
%L'id\'ee est de tracer deux courbes (par exemple pour les deux valeurs de $k$ 1/5 et 99/100, mais \'eventuellement pour d'autres valeurs aussi): l'une donnant la valeur constante --- celle de la saturation --- et l'autre la droite obtenue en prenant par exemple la commande \'egale au membre de droite de \eqref{eq12c}.
%On peut aussi repr\'esenter sur ce diagramme la valeur du point $M^*$ d'\'equilibre positif, afin de comprendre si la saturation risque d'\^etre active lorsque l'on commence les l\^achers (je pense que c'est effectivement le cas).]}

\begin{thm}
\label{th4}
Let $p\in\mathbb N^*$.
Assume that, for any $n\in\mathbb N$, $\Lambda_n$ is chosen at least equal to the {\em smallest} of the right-hand side of \eqref{eq450a} and of a positive constant $\bar\Lambda$.
Then every solution of {\color{black} system \eqref{system_SIT_imp}} converges globally exponentially to $E_0^*$, in any of the following situations:

$\bullet$ {\bf Case 1.}
\begin{equation}
\label{eq610}
k\in \left(
0,\frac{\mu_F}{(1-r)\rho}
\right),\quad \mbox{ and }  \quad
\bar\Lambda = 2\frac{\left(\cosh\left(\mu_S\tau\right)-1\right)}{\mu_S\tau^2}
\frac{1}{e\beta\gamma}
\cN_F.
\end{equation}

$\bullet$ {\bf Case 2.}
\begin{multline}
\label{eq611}
k\in \left(
0,2\; \frac{\mu_M}{\rho}\frac{1-r}{r^2}\left(
\sqrt{1 + \frac{\mu_F}{\mu_M} \left(
\frac{r}{1-r}
\right)^2} -1
\right)
\right), \quad \mbox{ and }  \quad \\
\bar\Lambda=\frac{\left(\cosh\left(\mu_S\tau\right)-1\right)}{\mu_S\tau^2}
\frac{1}{e\beta\gamma}
\max\{r,1-r\} \max\left\{
\frac{\cN_M}{r},\frac{\cN_F}{1-r}
\right\}.
\end{multline}
\end{thm}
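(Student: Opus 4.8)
The plan is to show that at every release instant the trajectory falls into one of two regimes, and that each regime reproduces a decay estimate already proved in the paper, both estimates being monitored by the \emph{same} Lyapunov functional chosen according to the case. Writing $\Lambda_n^{\mathrm{CL}}$ for the right-hand side of \eqref{eq450a}, the hypothesis gives $\Lambda_n\ge\min\{\Lambda_n^{\mathrm{CL}},\bar\Lambda\}$, and since enlarging a release only enlarges $M_S$ and hence only diminishes the growth terms $\frac{FM}{M+\gamma M_S}e^{-\beta(M+F)}$, it suffices to control the extremal releases: in each regime only a \emph{lower} bound on $\Lambda_n$ is needed. For Case 1, I would follow $F$ through the factorized equation \eqref{ea1b}; for Case 2, I would follow $\mathcal V=\frac12(M^2+F^2)$ from \eqref{V}.

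First I would treat the \emph{closed-loop regime}, i.e. the releases for which $\Lambda_n^{\mathrm{CL}}\le\bar\Lambda$, so that $\Lambda_n\ge\Lambda_n^{\mathrm{CL}}$. By the very construction underlying Theorem \ref{th2} (and Proposition \ref{prop2}), this guarantees $\gamma M_S(t)\ge(\frac1k-1)M(t)$, hence $\frac{M}{M+\gamma M_S}\le k$, throughout the relevant interval. In Case 1 this yields $\dot F\le\big((1-r)\rho k-\mu_F\big)F$, a strict exponential decay precisely because \eqref{eq610} imposes $k<\mu_F/((1-r)\rho)=1/\cN_F$. In Case 2 the same pointwise bound (together with $e^{-\beta(M+F)}\le1$) gives $\dot{\mathcal V}\le-\mu_M M^2+\rho k r MF-(\mu_F-\rho k(1-r))F^2$, and the admissible range for $k$ in \eqref{eq611} is exactly the condition making this quadratic form negative definite, namely $\mu_M(\mu_F-\rho k(1-r))>\frac14\rho^2k^2r^2$, which produces the stated upper bound on $k$. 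Either way the chosen functional decays at a uniform rate on such intervals.

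Next I would treat the \emph{open-loop regime}, i.e. the releases for which $\Lambda_n^{\mathrm{CL}}>\bar\Lambda$, so that $\Lambda_n\ge\bar\Lambda$. The aim is to reproduce the per-period estimates of Theorem \ref{th3}: using $\frac{M}{M+\gamma M_S}e^{-\beta(M+F)}\le\alpha/(\gamma M_S)$ from \eqref{ea2} with $\alpha=1/(e\beta)$ (see \eqref{ea15}) and integrating over one period. The value $\bar\Lambda$ in Case 1 is exactly the amplitude saturating the mean-value inequality \eqref{ea3} (the $F$-branch, $2\cN_F$), while the value in Case 2 is exactly the one saturating the Lyapunov inequality \eqref{ea5} (the $\mathcal V$-branch). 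Thus, whenever $M_S$ dominates its periodic reference $M_S^{\mathrm{per}}$ associated with $\bar\Lambda$, the integral $\int_{n\tau}^{(n+1)\tau}dt/M_S$ is at most $\tau\langle1/M_S^{\mathrm{per}}\rangle$ of \eqref{eqq12}, and the same functional does not increase over the period.

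The delicate point, and the main obstacle, is to glue the two regimes together and to control the build-up of $M_S$. The per-period open-loop estimate only applies once $M_S(t)$ dominates $M_S^{\mathrm{per}}(t)$; but after a stretch of closed-loop releases (which may fall well below $\bar\Lambda$) the sterile population need not dominate the periodic reference, and because $\bar\Lambda$ is taken at the \emph{critical} value, the per-period factor is only marginally $\le1$ at steady state. I would resolve this in three moves: (i) exploit that the chain of inequalities in \eqref{ea2} is never simultaneously tight for $(M,F)$ away from the origin, so the open-loop decay is in fact \emph{strict} outside any neighborhood of $E_0^*$; (ii) use the dissipativity of \eqref{system_SIT_imp}, i.e. the compact absorbing set $\mathcal D$, to bound the finitely many periods of transient under-shoot of $M_S$, which decays geometrically at rate $e^{-\mu_S\tau}$ towards $M_S^{\mathrm{per}}$; and (iii) observe that the open-loop regime cannot persist indefinitely, since sustained releases $\Lambda_n\ge\bar\Lambda$ drive the population down and thereby lower $\Lambda_n^{\mathrm{CL}}$ below $\bar\Lambda$, after which the strict closed-loop decay of Step 2 takes over near the origin and forces exponential convergence. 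Reconciling this regime-switching with the sparse-measurement bookkeeping of \eqref{eq450a} (where the super-solution bound is built over blocks of length $p\tau$) and assembling everything into a single comparison estimate with a rate independent of the initial condition is the technical heart of the argument.
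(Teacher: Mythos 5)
Your overall strategy coincides with the paper's: for each case you pick the common Lyapunov functional the paper itself uses ($V=F$ for Case 1, $\mathcal V=\frac12(M^2+F^2)$ for Case 2), you show it decays on closed-loop intervals via the bound $M/(M+\gamma M_S)\le k$ inherited from Theorem \ref{th1} together with the negative-definiteness (discriminant) condition that produces the range of $k$ in \eqref{eq611}, and you show it decays on open-loop intervals by invoking the corresponding branch (\eqref{ea3} resp.\ \eqref{ea5}) of the proof of Theorem \ref{th3}. This is exactly the published argument, and you correctly identify the role of the common functional, namely immunity to arbitrarily many switches between the two modes.

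The ``delicate point'' you raise in your last paragraph is, however, a genuine one, and your three-step plan does not close it. The estimates \eqref{ea3} and \eqref{ea5} are established for system \eqref{eq:per}, i.e.\ with $M_S$ equal to the periodic steady state $M_S^{\text{per}}$ of \eqref{eq601}. For the impulsive system \eqref{system_SIT_imp}, a single release $\Lambda_n\ge\bar\Lambda$ on top of an arbitrary $M_S(n\tau)\ge 0$ only guarantees $M_S(t)\ge\tau\bar\Lambda e^{-\mu_S(t-n\tau)}$, so $\int_{n\tau}^{(n+1)\tau}dt/M_S(t)$ can exceed $\tau\left\langle 1/M_S^{\text{per}}\right\rangle$ by the factor $1/(1-e^{-\mu_S\tau})$; with $\bar\Lambda$ taken at the critical value, the per-period estimate then no longer yields decay of the functional (already at $n=0$, where $M_S(0)=0$). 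The paper's own proof silently identifies the impulsive $M_S$ with $M_S^{\text{per}}$ and does not address this, so you are being more careful than the source here --- but your proposed repairs remain sketches: the claim in (ii) that the undershoot of $M_S$ lasts only finitely many periods is unjustified when the modes keep alternating (a closed-loop stretch with near-zero releases can let $M_S$ decay back down before the next open-loop stretch), and (iii) is circular, since showing that the open-loop regime cannot persist requires precisely the open-loop decay in question. A clean fix is either to enlarge $\bar\Lambda$ by the factor $1/(1-e^{-\mu_S\tau})$ (equivalently, to use only the crude bound $M_S(n\tau^+)\ge\tau\Lambda_n$ instead of the periodic regime), or to establish a mode-independent lower bound on $M_S(n\tau)$; as written, neither your argument nor the paper's supplies such a step.
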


%% PAB
\comment{
\begin{thm}
%\label{th4}
Assume that $\Lambda_n$ is chosen at least equal to the {\em smallest} of the two values in \eqref{Lambda_cond} and \eqref{eq12a}.
Then every solution of system \eqref{eq:per} converges globally asymptotically to $0$, provided that $k$ fulfills
\begin{equation}
%\label{eq47}
k\in \left(
0,2\frac{\mu_M}{\rho}\frac{1-r}{r^2}\left(
\sqrt{1 + \frac{\mu_F}{\mu_M} \left(
\frac{r}{1-r}
\right)^2} -1
\right)
\right)
\end{equation}
\end{thm}
}
%% PAB

The interest of the previous result is of course to consider the {\em smallest} of the two values of $\bar\Lambda$: it results in {\color{black} \em saturated} control laws.

The main issue of the proof (presented below) is to overcome the possible occurrence of infinitely many switches between the two modes.
The demonstration is based on the use of {\em common Lyapunov functions}, that decrease along the trajectories of the system, regardless of the mode in use.
{\color{black} Different} Lyapunov functions are required for the two cases.

\begin{rem}
Notice that
\begin{equation}
2\frac{\mu_M}{\rho}\frac{1-r}{r^2}\left(
\sqrt{1 + \frac{\mu_F}{\mu_M} \left(
\frac{r}{1-r}
\right)^2} -1
\right)
< 2\frac{\mu_M}{\rho}\frac{1-r}{r^2} \frac{1}{2} \frac{\mu_F}{\mu_M} \left(
\frac{r}{1-r}
\right)^2
= \frac{\mu_F}{(1-r)\rho},
\end{equation}
so the condition on $k$ contained in \eqref{eq611} is more restrictive than the one in \eqref{eq610}.
\end{rem}

\begin{rem}
The values of $\bar\Lambda$ that appear in \eqref{eq610} and \eqref{eq611} are two of the three that appear in \eqref{Lambda_cond}, corresponding to \eqref{ea3} and \eqref{ea5} in the proof of Theorem {\rm\ref{th3}}, page {\rm \pageref{th3}}.
See the proof for more explanations.
\end{rem}

\begin{proof}[Proof of Theorem \rm\ref{th4}]
We consider here the case where $p=1$, and the case with $p>1$ is treated in a similar way.
\vspace{2mm}

\noindent $\bullet$ 1.\
For the \textbf{Case 1}, consider the positive semidefinite  function $V(M,F):=F$ introduced earlier by \eqref{eq16a}, page \pageref{eq16a}.
For the sake of simplicity, we will denote $V(t):=V \big( M(t),F(t) \big)$. As shown in the proof of Theorem \ref{th3}, item 1, it holds that
\begin{equation}
\label{eq700}
V \big( (n+1)\tau \big) \leq e^{-\varepsilon\tau} V(n\tau)
\end{equation}
for a certain $\varepsilon>0$ (independent of $n$) when $\Lambda_n$ is at least equal to $\bar\Lambda$ given in \eqref{eq610}.
On the other hand, it is shown in the proof of Theorem \ref{th1}, item 3, that $V$ decreases exponentially when $\Lambda_n$ is chosen according to \eqref{eq12a} (which is \eqref{eq450a} in the case $p=1$).
Therefore, regardless of the mode commutations, $V(t)$, and thus $F(t)$, converges exponentially towards zero for every trajectory.
As substantiated in the proof of Theorem \ref{th1}, this is sufficient to deduce  the convergence of $M(t)$ towards zero.
Thereby, Theorem \ref{th4} is proved in the \textbf{Case 1}.
\vspace{2mm}

\noindent $\bullet$ 2.\
For the \textbf{Case 2}, let $\mathcal{V}$ be the positive definite function $\mathcal{V}(M,F):=\frac{1}{2}(M^2+F^2)$ introduced by \eqref{V}, page \pageref{V}.
It was shown in the proof of Theorem \ref{th3}, item 3, that property \eqref{eq700} also holds for some $\varepsilon>0$ when $\Lambda_n$ is chosen according to \eqref{eq12a}.
\comment{When $\Lambda_n$ is chosen at least equal to the value in \eqref{Lambda_cond}, then the function $V=\frac{1}{2}(M^2+F^2)$ decreases from $n\tau$ to $(n+1)\tau$, as shown in the demonstration of Theorem \ref{th3}.}

%Assume now that $\Lambda_n$ is taken smaller than the value in \eqref{Lambda_cond}, but larger than the one in \eqref{eq12a}.
%In such case, due to Theorem \ref{th1}, one has indeed for all $t\in (n\tau, (n+1)\tau)]$,
On the other hand, when $\Lambda_n$ is taken smaller than the value in \eqref{Lambda_cond}, due to Theorem \ref{th1}, one has for all $t\in \big( n\tau, (n+1)\tau) \big]$, see \eqref{eq24}, that
\begin{equation}
\gamma M_S(t) \geq \left(
\frac{1}{k}-1
\right) M(t),
\qquad \text{ that is: }\quad
\frac{M(t)}{M(t)+\gamma M_S(t)} \leq k.
\end{equation}
Therefore, on the same interval, it holds:
\begin{eqnarray*}
\dot{\mathcal{V}}
& = &
M\dot M+F\dot F\\
& = &
\rho \frac{FM(rM+(1-r)F)}{M+\gamma M_S} e^{-\beta(M+F)}
-\mu_M M^2 -\mu_F F^2\\
& \leq &
\rho k F(rM+(1-r)F) e^{-\beta(M+F)}
-\mu_M M^2 -\mu_F F^2\\
& \leq &
\rho k F(rM+(1-r)F) -\mu_M M^2 -\mu_F F^2\\
& = &
-\Big(
\mu_M M^2 - \rho kr MF + (\mu_F-\rho k (1-r)) F^2
\Big).
\end{eqnarray*}
The reduced discriminant of the previous quadratic form is
\begin{equation}
\Delta' =
r^2\rho^2k^2+4\mu_M(1-r)\rho k - 4\mu_M\mu_F,
\end{equation}
which is negative when $k$ is taken according to \eqref{eq611}.
In such case, $\dot{\mathcal{V}}$ is negative definite.
One concludes that $\mathcal{V}$ decreases exponentially to zero, and this ensures the global exponential stability of the mosquito-free equilibrium $E_0^*$.
The result is thus also proved in the \textbf{Case 2}. This achieves the proof of Theorem \ref{th4}.
\end{proof}

\section{Numerical illustrations}

%\begin{center}
%\color{red}
%Il me semble que, pour chaque essai, il serait plus clair d'associer syst\'ematiquement {\bf dans la m\^eme figure} la figure actuelle et le tableau donnant les cumuls: il ne me semble pas n\'ecessaire d'attribuer deux num\'eros distincts \`a ces deux informations, ou de les laisser se placer \`a des endroits diff\'erents.
%\color{black}
%\end{center}

The values of the vital characteristics of the mosquitoes which are used in the simulations are summarized in Table \ref{tab1}.
\begin{table}[h!]
\centering
\begin{tabular}{|c|c|l|}   \hline
\textbf{Parameter} & \textbf{Value}	 & \textbf{Description} \\ \hline
$\rho$		    	&4.55& Number of eggs a  female can deposit \\ \hline
$r$							&0.5& $r:(1-r)$ expresses the primary sex ratio in offspring  \\ \hline
$\sigma$	    	&0.05&	Regulates the larvae development into adults under \\ & & density dependence and larval competition \\ \hline
$K$	          	&140&	Carrying capacity \\ \hline
$\mu_M $	    	&0.04&	Mean mortality rate of wild adult male mosquitoes \\ \hline%&  in absence of the density dependence \\
$\mu_F $	    	&0.03&	Mean mortality rate of wild adult female mosquitoes \\ \hline%& in absence of the density dependence \\
$\mu_S$				&0.04&	Mean mortality rate of sterile adult male mosquitoes \\ \hline %&  in absence of the density dependence \\ \hline
$\ensuremath{\gamma}$ &1 & Fitness of sterile adult male mosquitoes \\ \hline %& If it is one, then they have the same fitness than wild mosquitoes. \\ \hline
\end{tabular}
\caption{{\em Aedes spp} parameters values}
\label{tab1}
\end{table}

With the above numbers, we have here for the global competition coefficient $\beta= \dfrac{\sigma}{K}=3.57 \times 10^{-4}$, and for the basic offspring numbers $\cN_F\approx75.83$ and $\cN_M\approx 56.87$. At equilibrium, the mosquito population is thus $E^*=(M^*,F^*)$ with $M^*\approx 6,925$, $F^*\approx 5,194$ individuals per hectare.

We now present simulations related to the different impulsive strategies developed in the previous sections.

For open-loop periodic impulsive releases carried out every $7$ (resp.\ $14$) days, we consider the optimal value given in (\ref{Lambda_cond}), page \pageref{Lambda_cond}, to estimate the number of sterile males to release, that is, {\color{black} $7\times 1,573=11,011$ (resp.\ $14\times1,604=22,456$)} sterile males per hectare and per week (resp.\ every two weeks). 

The simulations run as long as $F(t)$ is greater than a threshold value, here $10^{-1}$, {\color{black} below which} we assume that elimination has been reached.

The corresponding simulations are given in Figure \ref{fig:1}. In Table \ref{tab2}, we summarize the cumulative number of sterile males as well as the number of releases needed to reach nearly ``elimination''. While, as expected, the total number of released sterile males is lower for $\tau=7$, there is no {\color{black} gain in terms of treatment duration.}
Thus, taking into account the cost of each release and also the risk of failure during the transport, it seems preferable to consider {\color{black} the lower} number of releases, and thus to choose $\tau=14$.
\begin{figure}[!t]
\begin{tabular}{cc}
 \includegraphics[width=.5\textwidth]{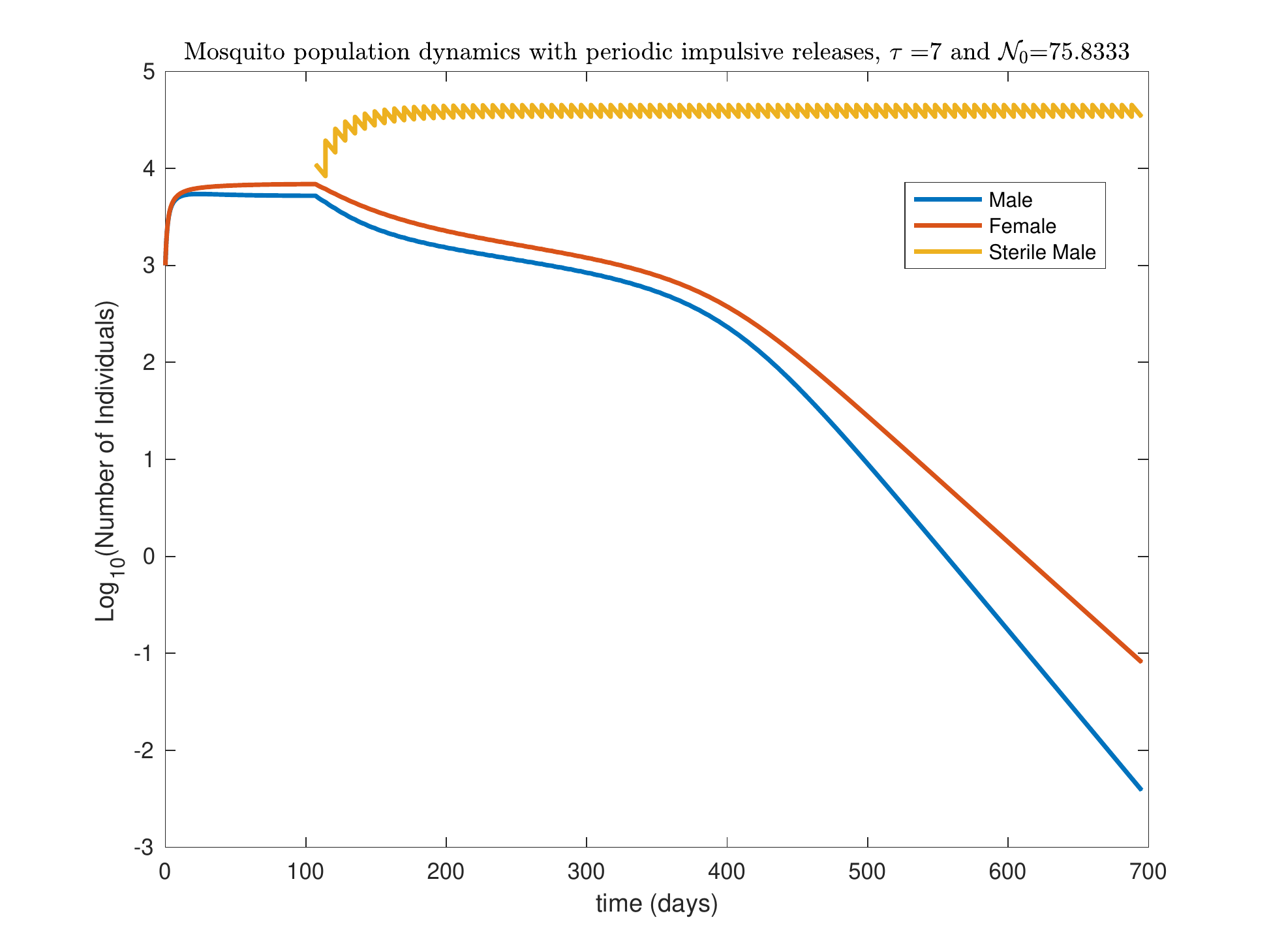} &
 \includegraphics[width=.5\textwidth]{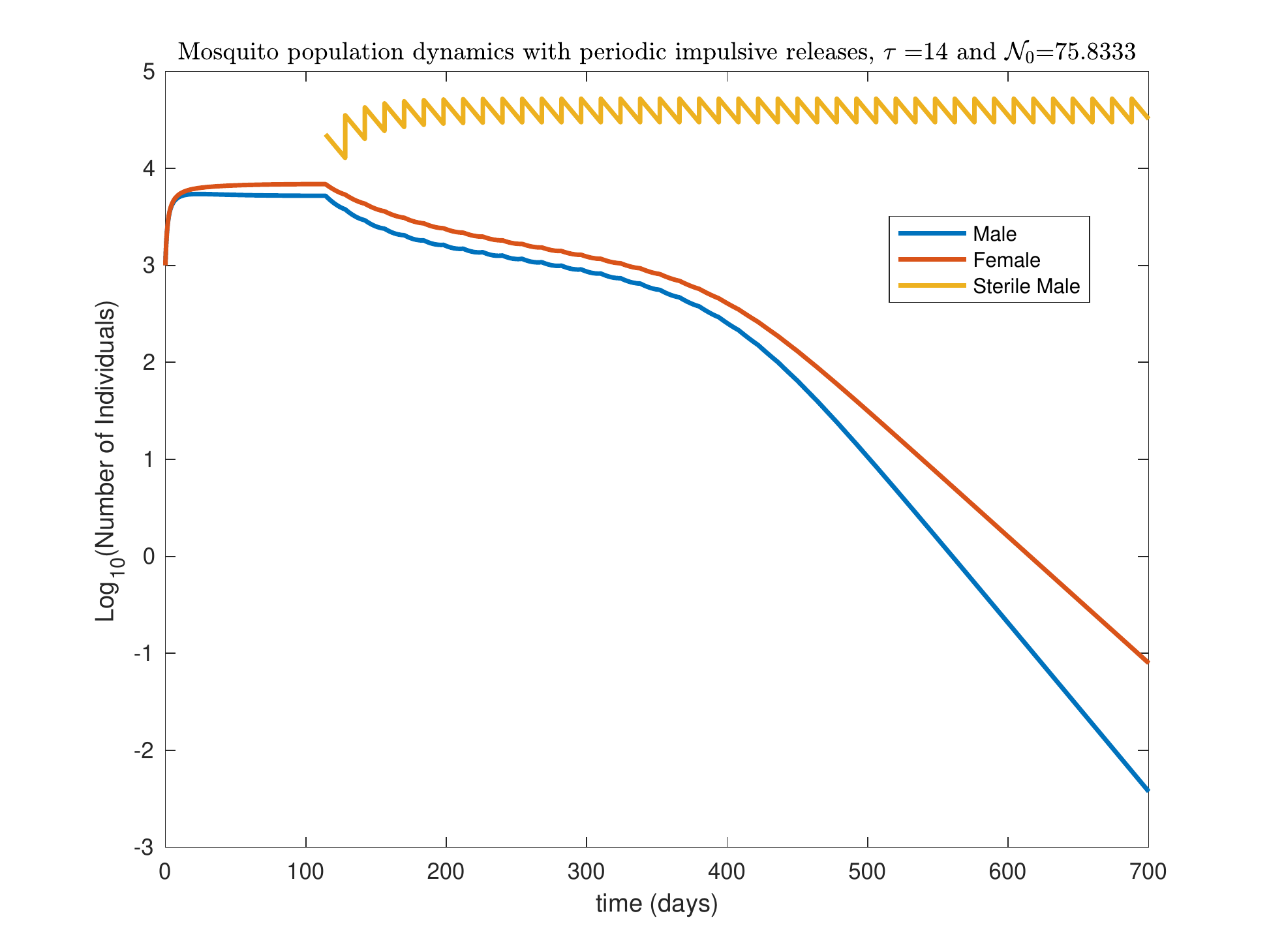} \\
 (a) & (b)
 \end{tabular}
 \caption{Open-loop periodic impulsive  SIT control of system \eqref{system_SIT_imp} with a period of: (a) $7$ days, (b) $14$ days. %See Table \ref{tab2}
 }
 \label{fig:1}
\end{figure}

\begin{table}[!th]
\centering
\begin{tabular}{|c|c|c|}   \hline
 Period (days) & Cumulative Number of released sterile males & Nb of Weeks to reach elimination\\
 \hline
$\tau=7$  & $ \color{black} \bf 924,627$ & 84\\
\hline
$\tau=14$  & \color{black} $942,869$  & 84 \\
\hline
\end{tabular}
\caption{Cumulative number of released sterile males for each open-loop periodic SIT control treatment. %See Figure \ref{fig:1}, page \pageref{fig:1}
}
\label{tab2}
\end{table}

The closed-loop approach can be used to reduce the cumulative number of released sterile insects and the number of effective releases. Further on, we consider several sub-cases.

We first consider measurements of the wild population every $\tau$ days or every $p\tau$ days for prescribed $p$ (here typically $p=4$).
Also, we take several values for the gain: smaller $k$ provides faster convergence towards $E^*_0$ -- at the price of large peak values of $\Lambda_n$ --, while the convergence slows down as  $k$ approaches $1/\cN_F$ with moderate values of $\Lambda_n$.
We will consider for practical applications two particular values of $k$, namely
\begin{equation}
\label{k}
\color{black} k\cN_F=0.2 \quad \text{and} \quad k\cN_F=0.99.
%k=0.20 \dfrac{1}{\cN_F} \quad \text{and} \quad k=\dfrac{0.99}{\cN_F}.
\end{equation}

\comment{
that we are able to estimate the size of the wild population every $tau$ days; the other case is when we consider that the wild population is estimated every $p\tau$ days, with $p \in \mathbb{N}^*$.
However, according to Theorem \ref{th1}, page \pageref{th1}, the choice of $k \in (0,\dfrac{1}{\cN_F})$ will play a fundamental role: close to $0$ means that the convergence towards $E^*_0$ is fast, while it is slower when $k$ is close to $\dfrac{1}{\cN_F}$. Thus, we will consider two (very) different values for $k$, namely $k=0.20 \dfrac{1}{\cN_F}$ and $k=\dfrac{0.99}{\cN_F}$, so that the last value of $k$ is really close to $\dfrac{1}{\cN_F}$.
}

The size $\Lambda_n$ of the $n$-th release is taken equal to the right-hand side of formula \eqref{eq12a} for $p=1$ (of \eqref{eq450a} for $p=4$): if, at the moment of the estimate, the size of the sterile male population is sufficiently large, $\Lambda_n$ may be null or small.
%However, this requires to have an estimate of the wild male and female population $M$ and $F$ at time $t=n\tau$. This estimate can be done every $p$ periods such that $\lambda_n$ is (only) adjusted every $np \tau$ time.

Simulations presented in Figures \ref{fig:2a} (page \pageref{fig:2a}) and \ref{fig:2b} (page \pageref{fig:2b})
clearly show that the choice of $k$ and $p$, as well as the period $\tau$ of the releases play an important role in the convergence of the wild population to $E^*_0$.
Tables \ref{tab21} and \ref{tab3} provide the total cumulative number of released sterile males, the number of weeks of SIT treatment needed to reach elimination, and the number of {\color{black} {\em effective} (that is nonzero) releases.}
For instance, when $(\tau,p)=(14,4)$ and $k=\dfrac{0.2}{\cN_F}$ is relatively small, elimination of wild mosquitoes can be achieved in $56$ weeks, with only $17$ effective releases, as shown in Fig. \ref{fig:4-5}(b), page \pageref{fig:4-5}. However, this option requires to release significant number of sterile insects per hectare (close to $2.9 \times 10^{6}$ {\color{black} for the whole treatment}).
% As seen, this numbers can be quite (too) large!

For the larger $k=\dfrac{0.99}{\cN_F}$ and with $(\tau,p)=(7,1)$ (see Figure \ref{fig:2b}(a)), the convergence is {\color{black} slower:} more than $240$ weeks of SIT treatment are required to reach nearly elimination. For $p=4$ (see Figure \ref{fig:2b}(b)), the wild population is close to extinction after $58$ weeks of SIT treatment. However, based on Table \ref{tab3}, it seems that the choice $(\tau, p)=(14,4)$ leads to the best result in terms of timing (62 weeks) and also in terms of cumulative size encompassing 20 effective releases.

The parameter $k$ is of main importance: when $p=4$, while the number of weeks to reach elimination is quite similar for both values of $\tau$, the cumulative number of released sterile males is clearly smaller when $k$ is closer to $1/\cN_F$.
\begin{figure}[!t]
\begin{tabular}{cc}
 \includegraphics[width=.5\textwidth]{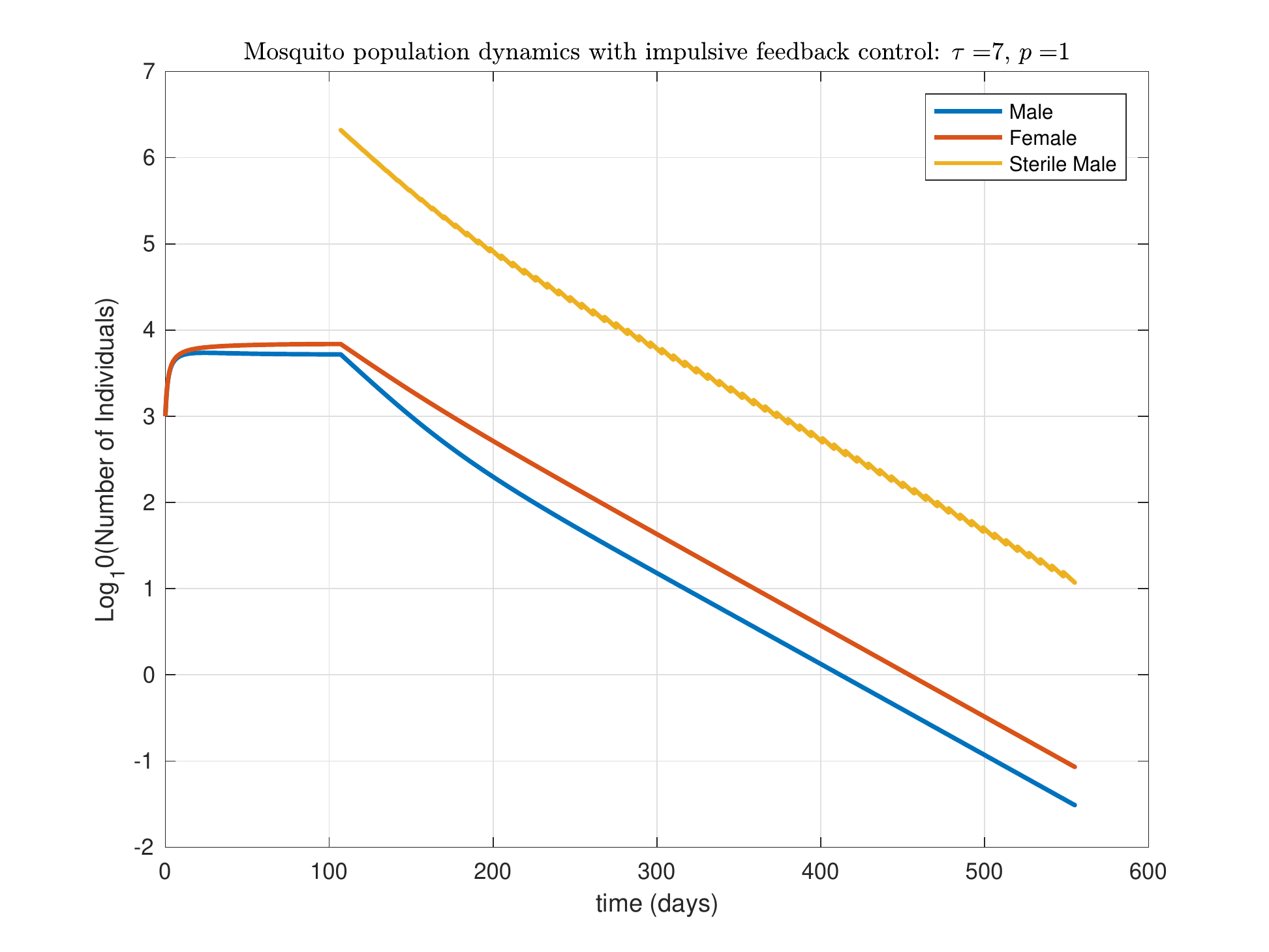} &
 \includegraphics[width=.5\textwidth]{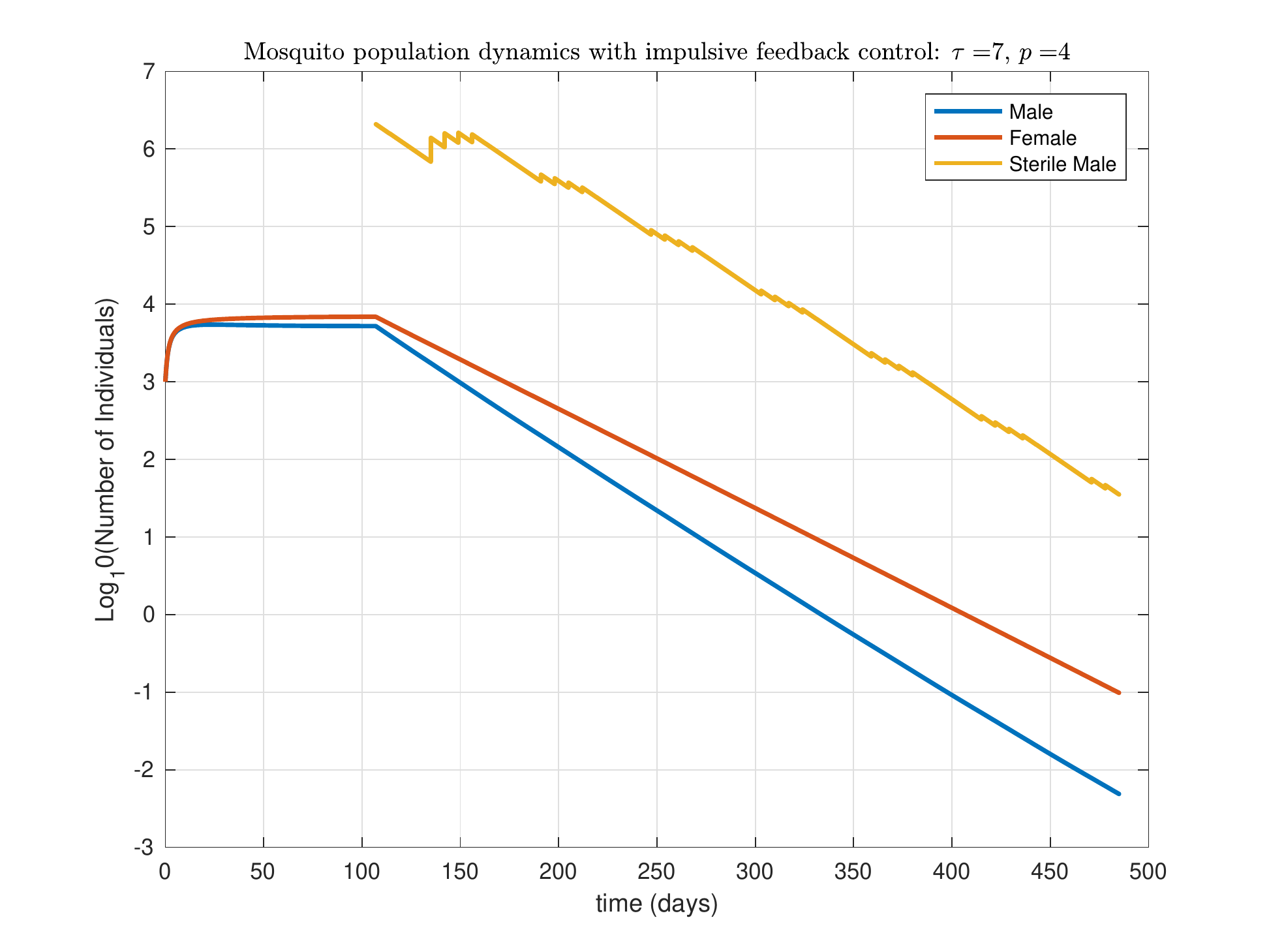} \\
 (a) & (b) \\
  \includegraphics[width=.5\textwidth]{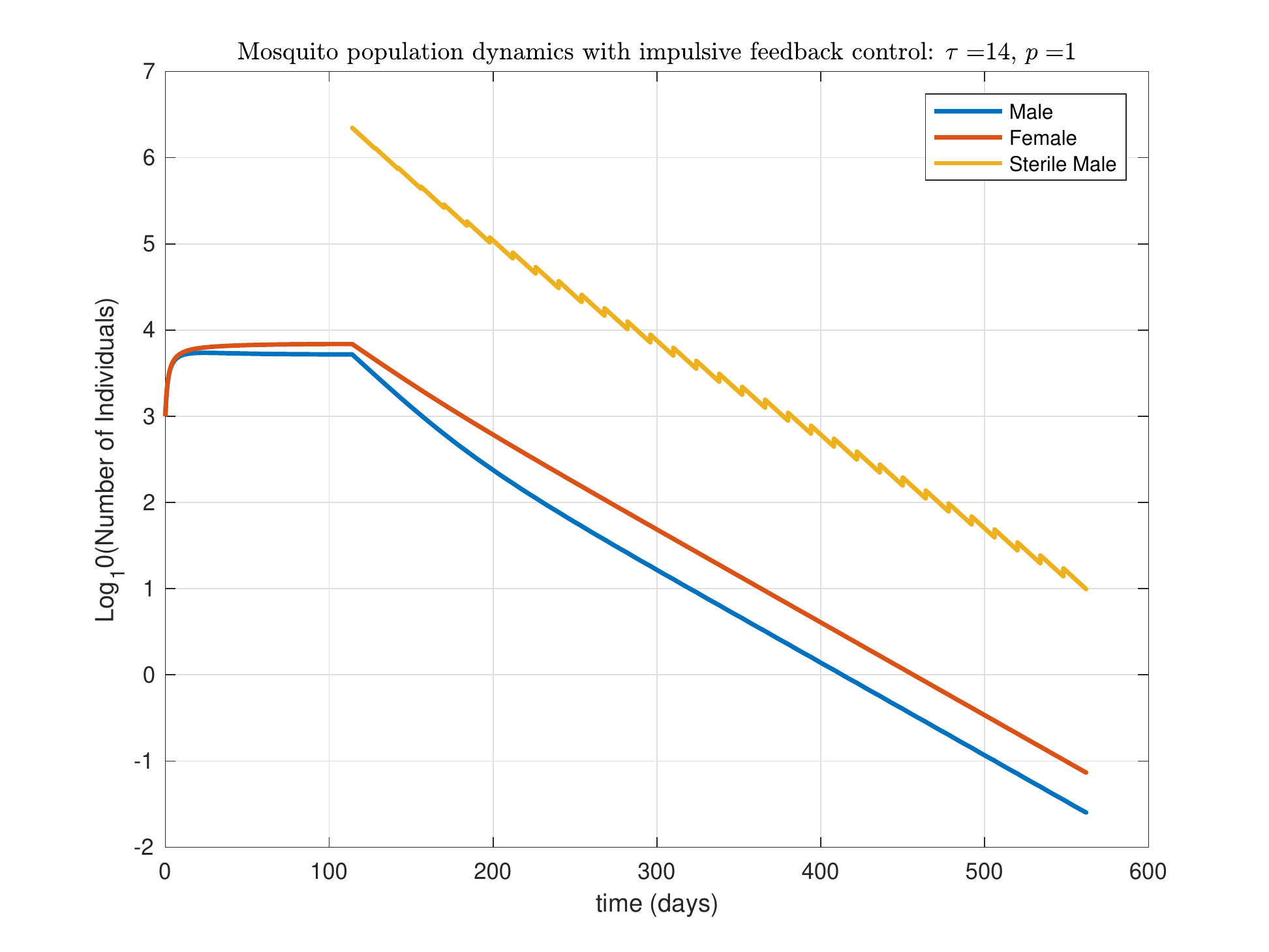} &
 \includegraphics[width=.5\textwidth]{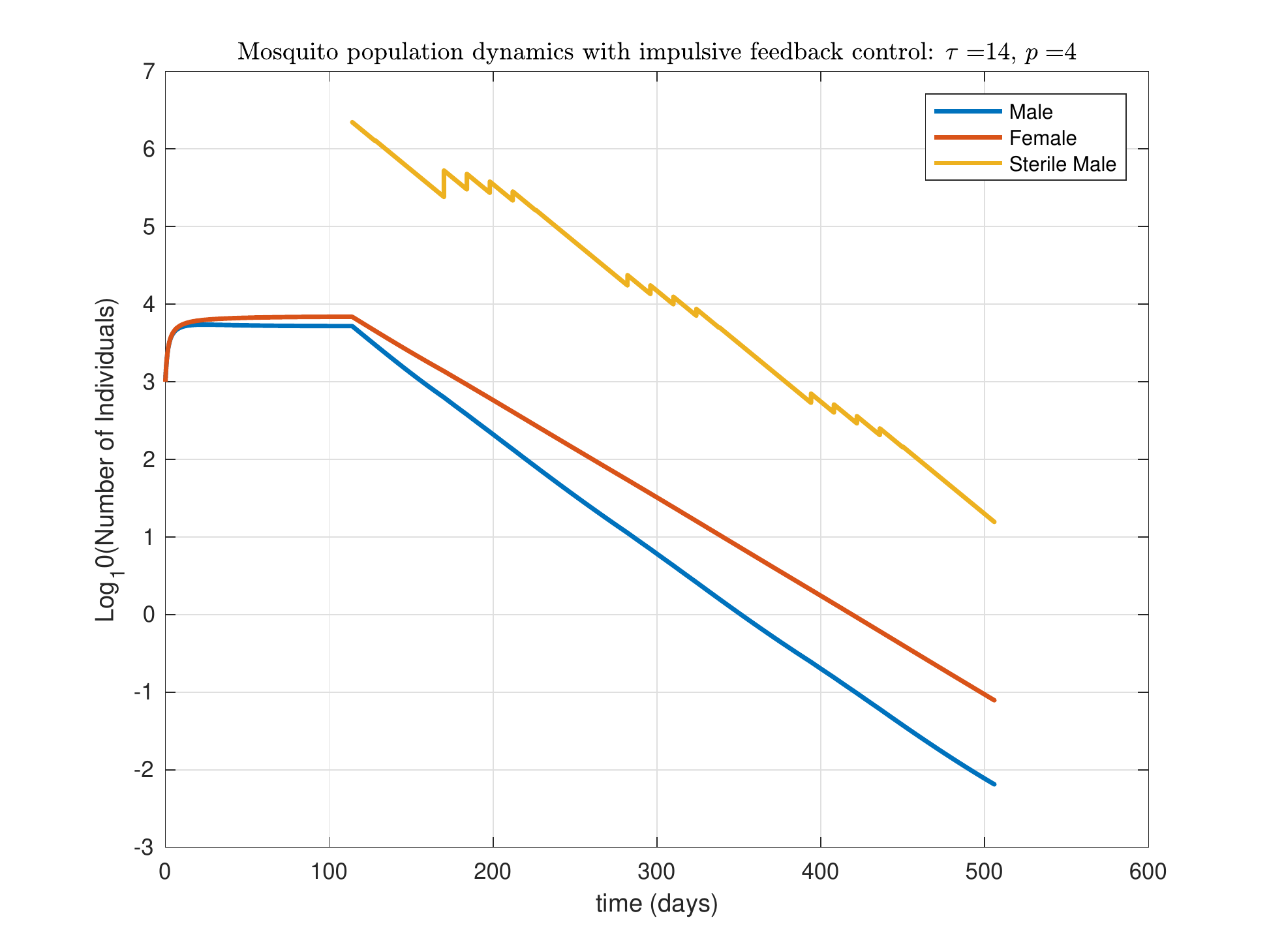} \\
 (c) & (d)
 \end{tabular}
 \caption{Closed-loop periodic impulsive  SIT control of system \eqref{system_SIT_imp} with $k=\dfrac{0.2}{\cN_F}$: (a) $7$ days, $p=1$; (b) $7$ days, $p=4$; (c) $14$ days, $p=1$; (d) $14$ days, $p=4$. See Table \ref{tab21}, page \pageref{tab21}}
 \label{fig:2a}
\end{figure}
\begin{table}[!ht]
\centering
\begin{tabular}{|c|c|c|>{\centering}p{1.5cm}|>{\centering}p{1.5cm}|c|}   \hline
 & \multicolumn{2}{|c|}{Cumulative Nb of}  & \multicolumn{2}{c|}{Nb of weeks needed} &  \\
  & \multicolumn{2}{|c|}{released sterile males}  & \multicolumn{2}{c|}{to reach elimination} & Nb of \tcm{nonzero} releases \\
   \hline
\backslashbox{Period}{p}  & $1$ & $4$ & $1$ & $4$ & $4$\\
   \hline
$\tau=7$ & $2,251,052$ & $4,363,430$ & $64$ & $54$ & $34$\\
\hline
$\tau=14$ & $2,390,676$ & $2,896,835$ & 64 & 56 & 17\\
\hline
\end{tabular}
\caption{Cumulative number of released sterile males and number of releases for each closed-loop periodic SIT control treatment when $k=\dfrac{1}{5} \cN_F$. See Figure \ref{fig:2a}, page \pageref{fig:2a}.}
\label{tab21}
\end{table}

\begin{figure}[!t]
\begin{tabular}{cc}
 \includegraphics[width=.5\textwidth]{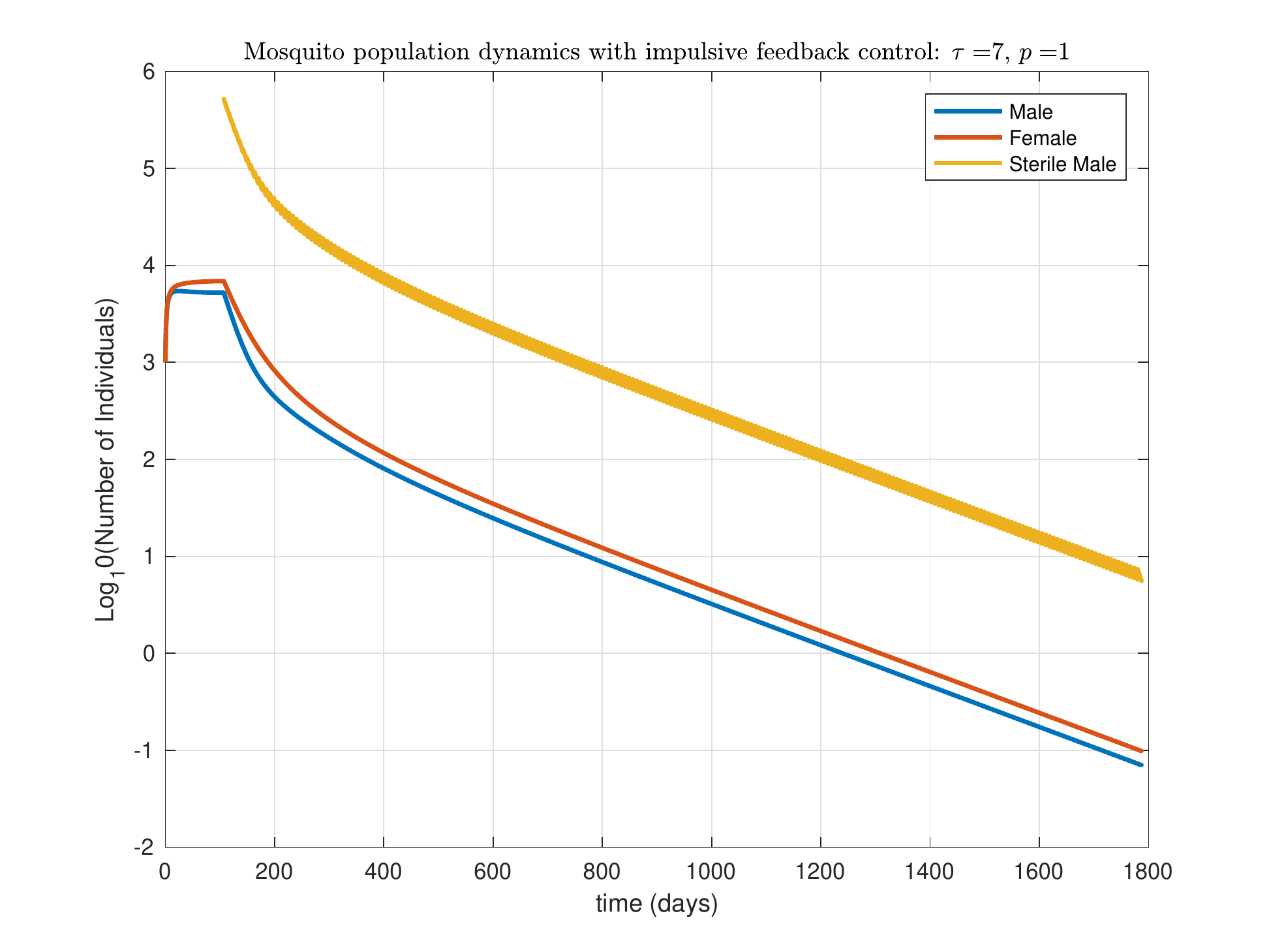} &  \includegraphics[width=.5\textwidth]{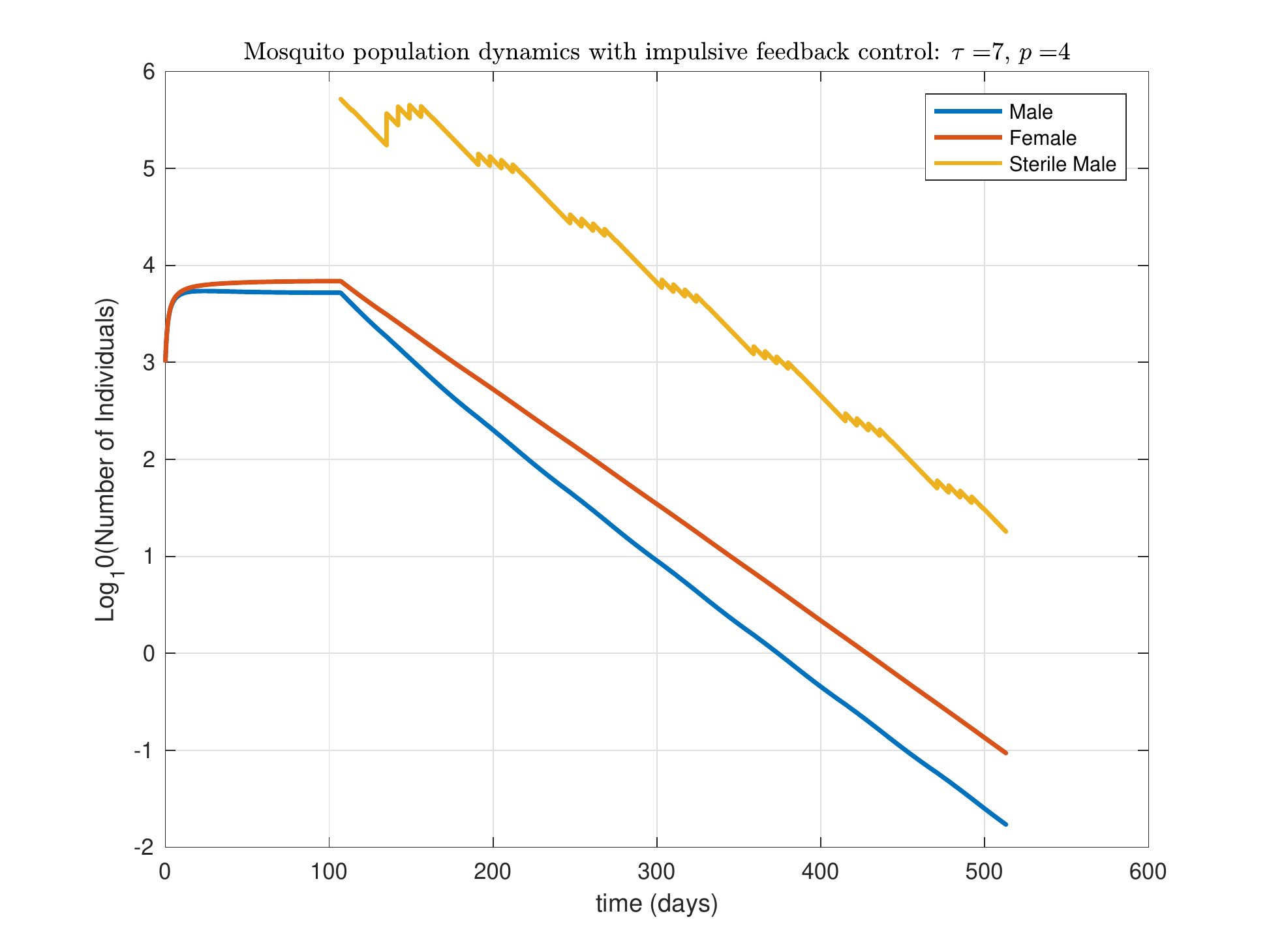} \\
 (a) & (b)  \\
  \includegraphics[width=.5\textwidth]{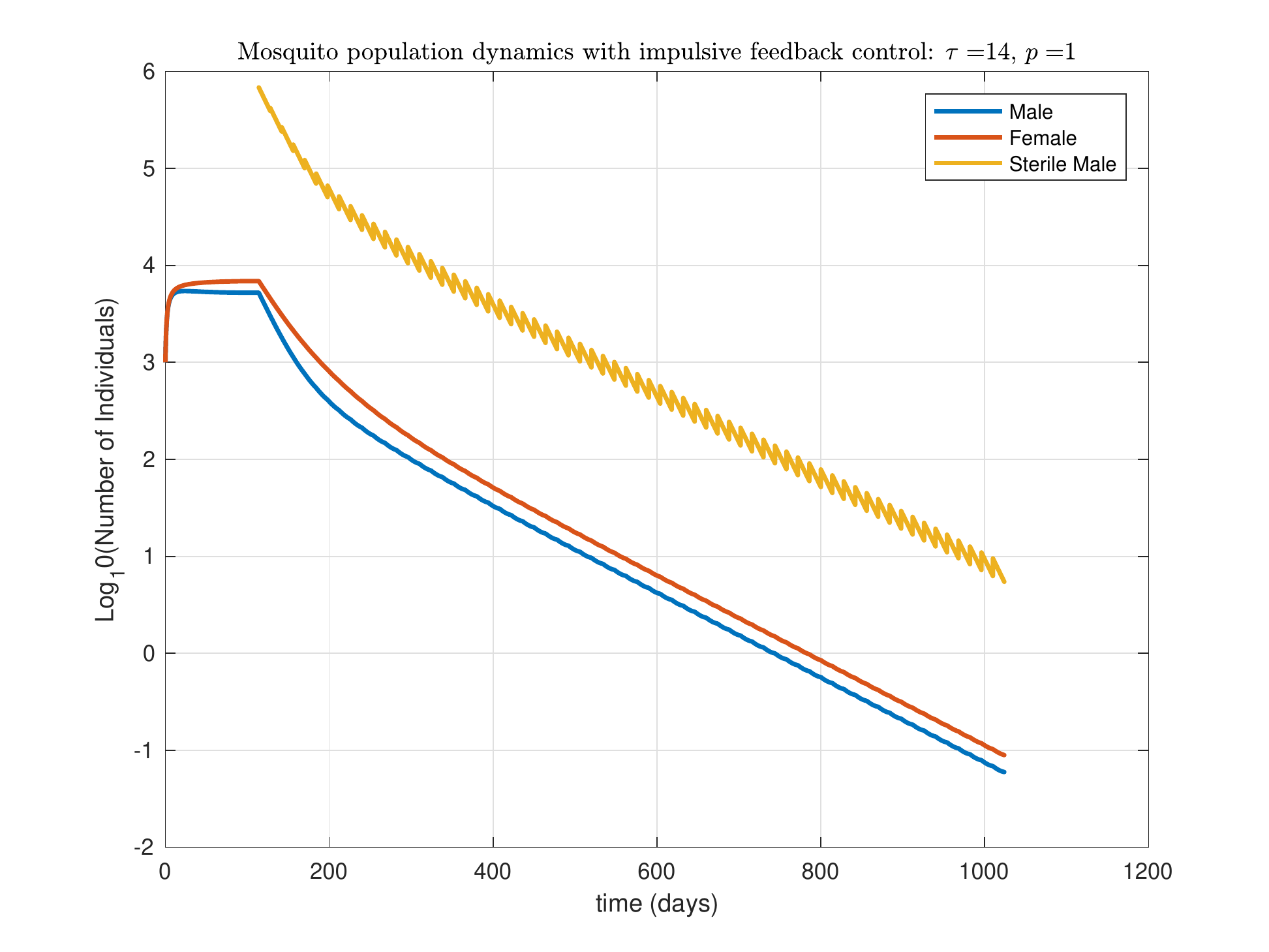} &  \includegraphics[width=.5\textwidth]{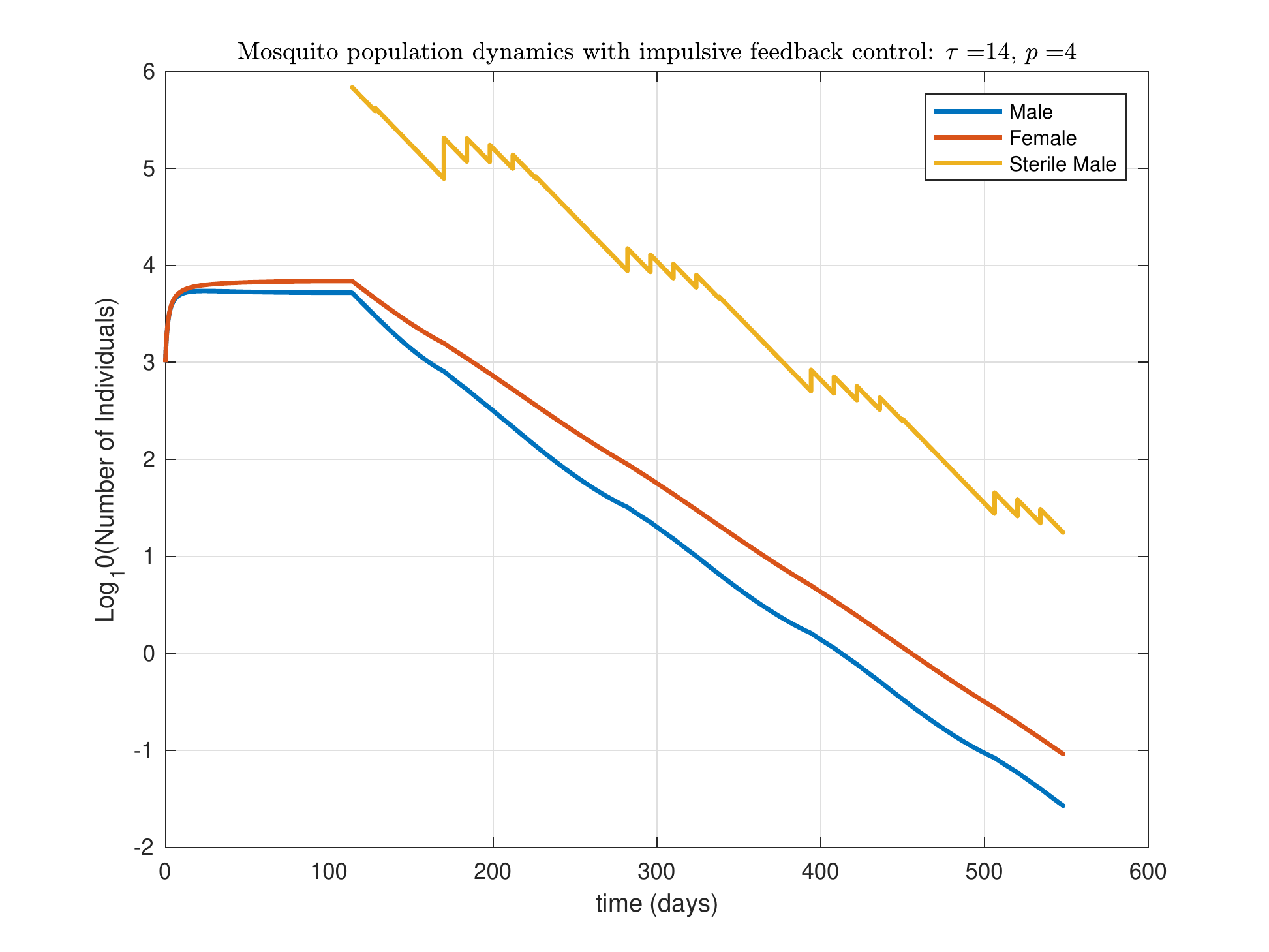} \\
  (c) & (d)
  \end{tabular}
 \caption{Closed-loop periodic impulsive  SIT control of system \eqref{system_SIT_imp} with $k=\dfrac{0.99}{\cN_F}$: (a) $7$ days, $p=1$; (b) $7$ days, $p=4$; (c) $14$ days, $p=1$; (d) $14$ days, $p=4$. See Table \ref{tab3}, page \pageref{tab3}}
 \label{fig:2b}
 \end{figure}

 \begin{table}[!ht]
\centering
\begin{tabular}{|c|c|c|>{\centering}p{1.5cm}|>{\centering}p{1.5cm}|c|}   \hline
 & \multicolumn{2}{|c|}{Cumulative Nb of}  & \multicolumn{2}{c|}{Nb of weeks needed} &  \\
  & \multicolumn{2}{|c|}{released sterile males}  & \multicolumn{2}{c|}{to reach elimination} & Nb of \tcm{nonzero} releases \\ \hline
 \backslashbox{Period}{p} & $1$ & $4$ & $1$ & $4$ & $4$ \\
% Period / p & $1$ & $4$ & $1$ & $p=4$ & $p=4$\\
 \hline
$\tau=7$  & $794,807$ & $1,221,593$ & 240  & 58 & 37 \\
\hline
$\tau=14$ & $ 909,344$ & $1,043,107$ & 130  & 62  & 20  \\
\hline
\end{tabular}
\caption{Cumulative number of released sterile males and number of releases for each closed-loop periodic SIT control treatment when $k=\dfrac{0.99}{\cN_F}$. See Figure \ref{fig:2b}, page \pageref{fig:2b}}
\label{tab3}
\end{table}

 \begin{figure}[!t]
 \begin{tabular}{cc}
 \includegraphics[width=.5\textwidth]{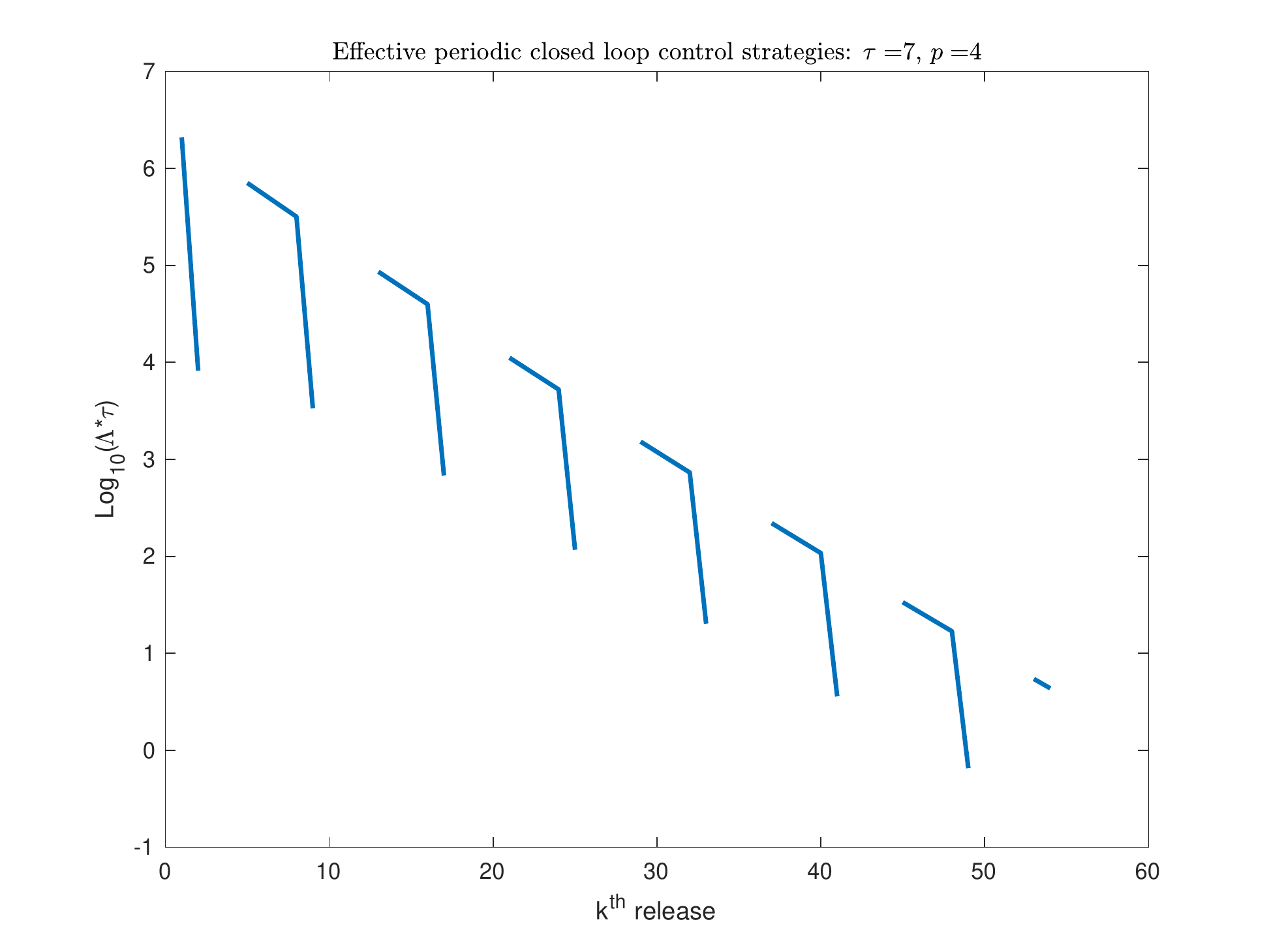} &  \includegraphics[width=.5\textwidth]{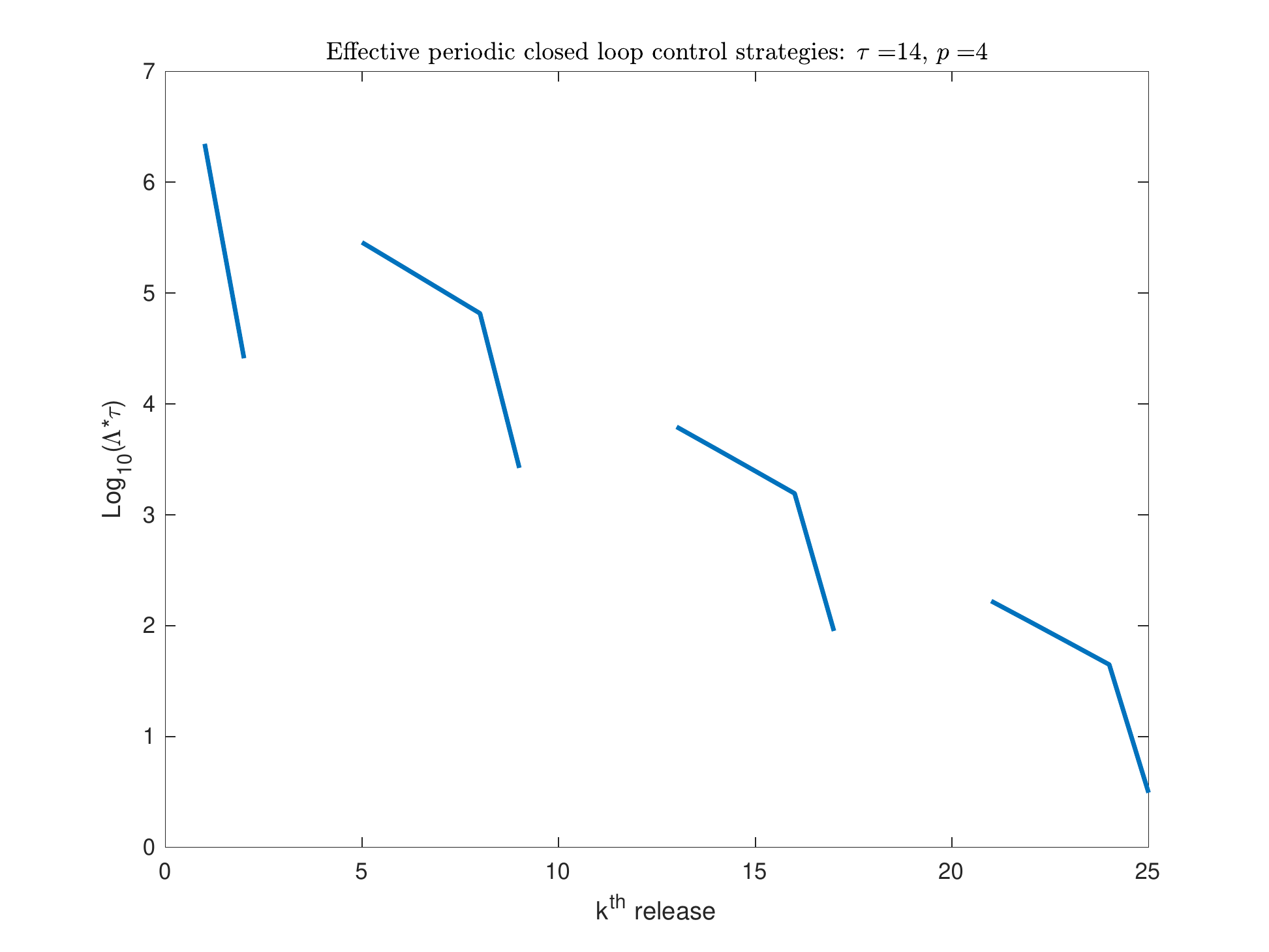}\\
 (a) & (b) \\
  \includegraphics[width=.5\textwidth]{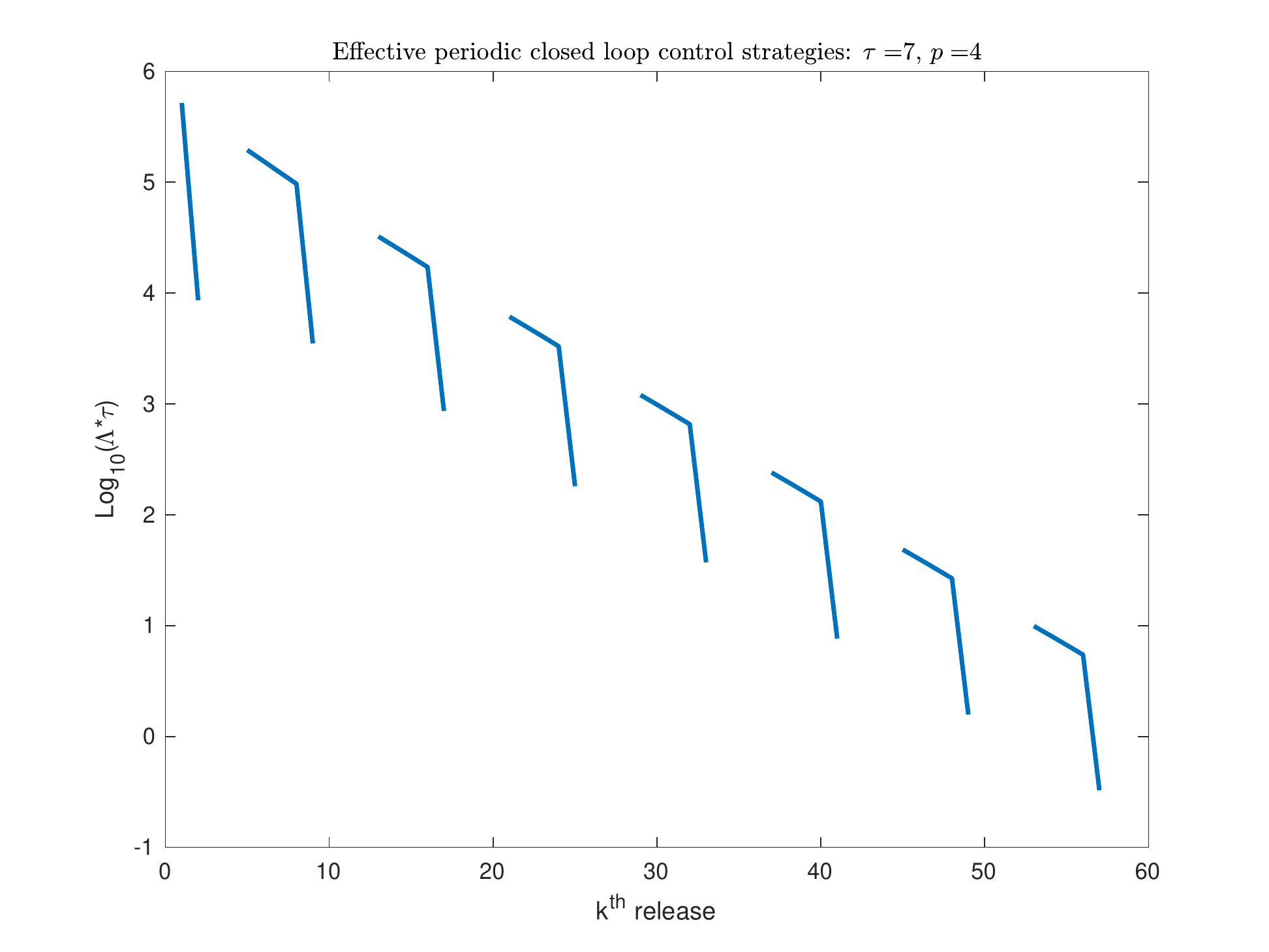} &  \includegraphics[width=.5\textwidth]{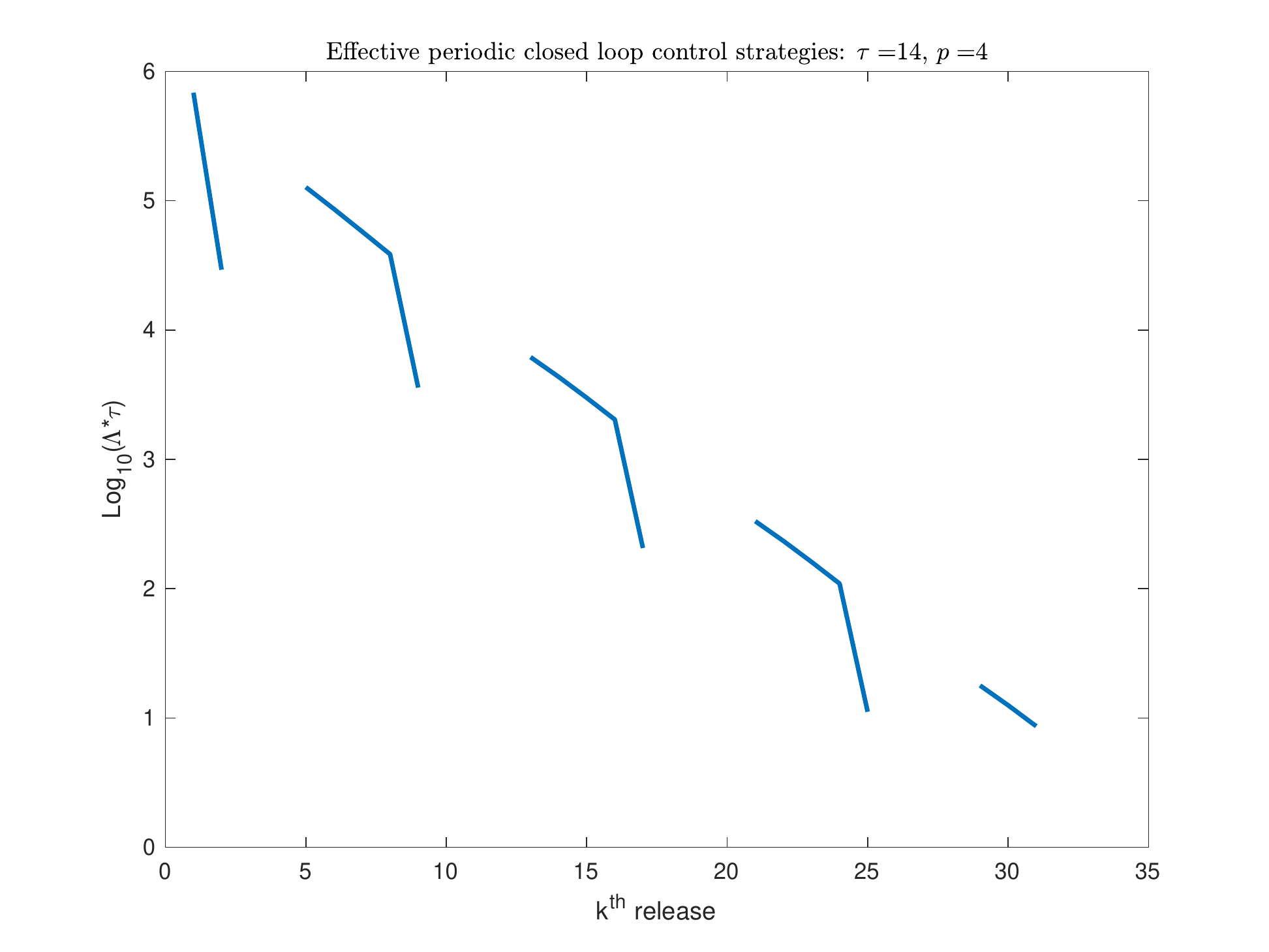} \\
  (c) & (d)
  \end{tabular}
 \caption{ Size of the release, $\Lambda_n$ at time $t=n\tau$ for closed-loop SIT control: (a,b) $k=\dfrac{0.2}{\cN_F}$; (c,d) $k=\dfrac{0.99}{\cN_F}$.  The discontinuities indicate ``no release''.}
 \label{fig:4-5}
 \end{figure}

We now consider mixed control strategies as exposed in Section \ref{se6}. In Figures \ref{fig:3a} and \ref{fig:3b} (pages \pageref{fig:3a} and \pageref{fig:3b}, respectively) we derive the simulations with {\color{black} the two} underlying values of $k$ given in \eqref{k}.
\begin{figure}[!t]
\begin{tabular}{cc}
 \includegraphics[width=.5\textwidth]{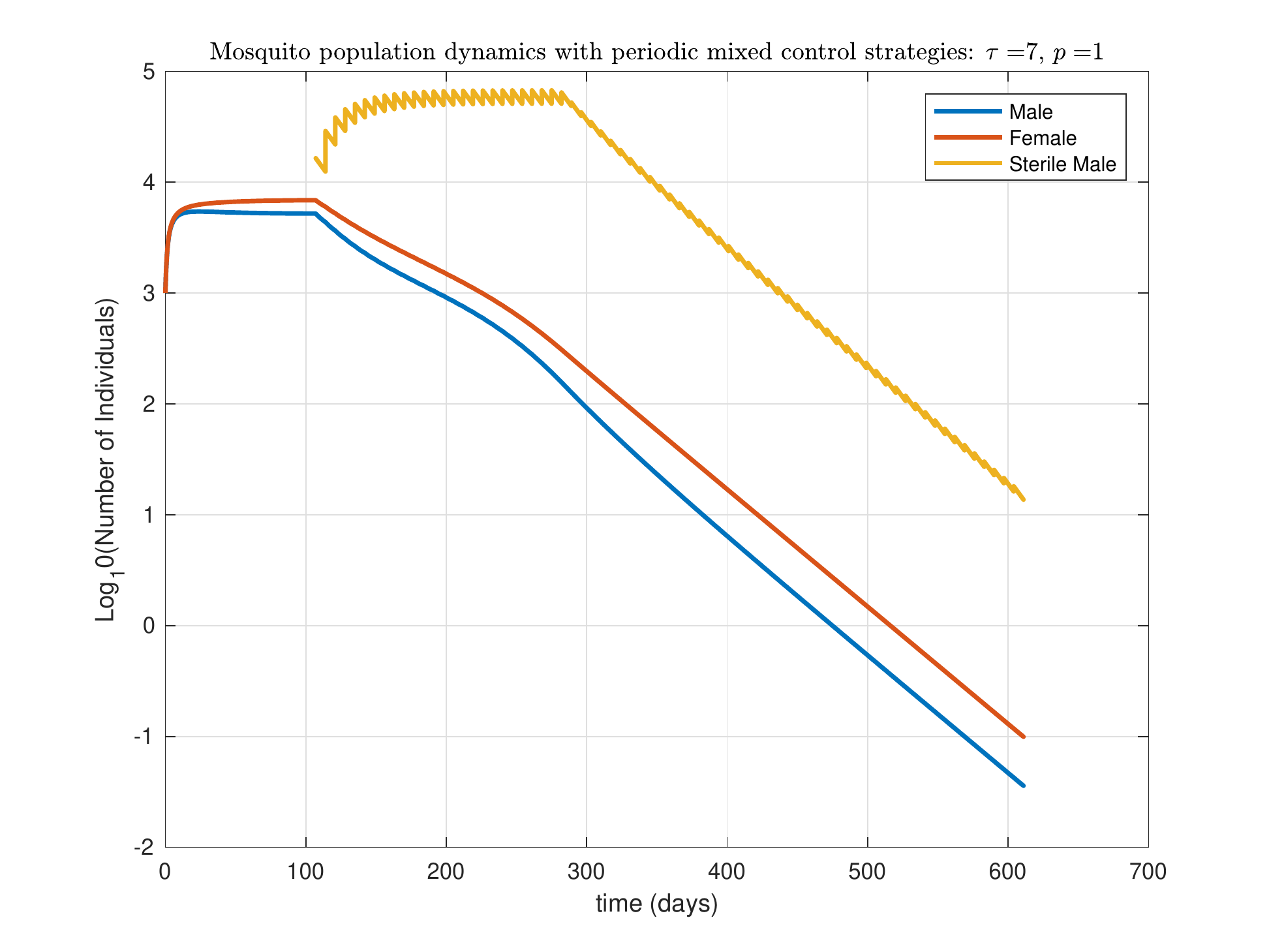} &  \includegraphics[width=.5\textwidth]{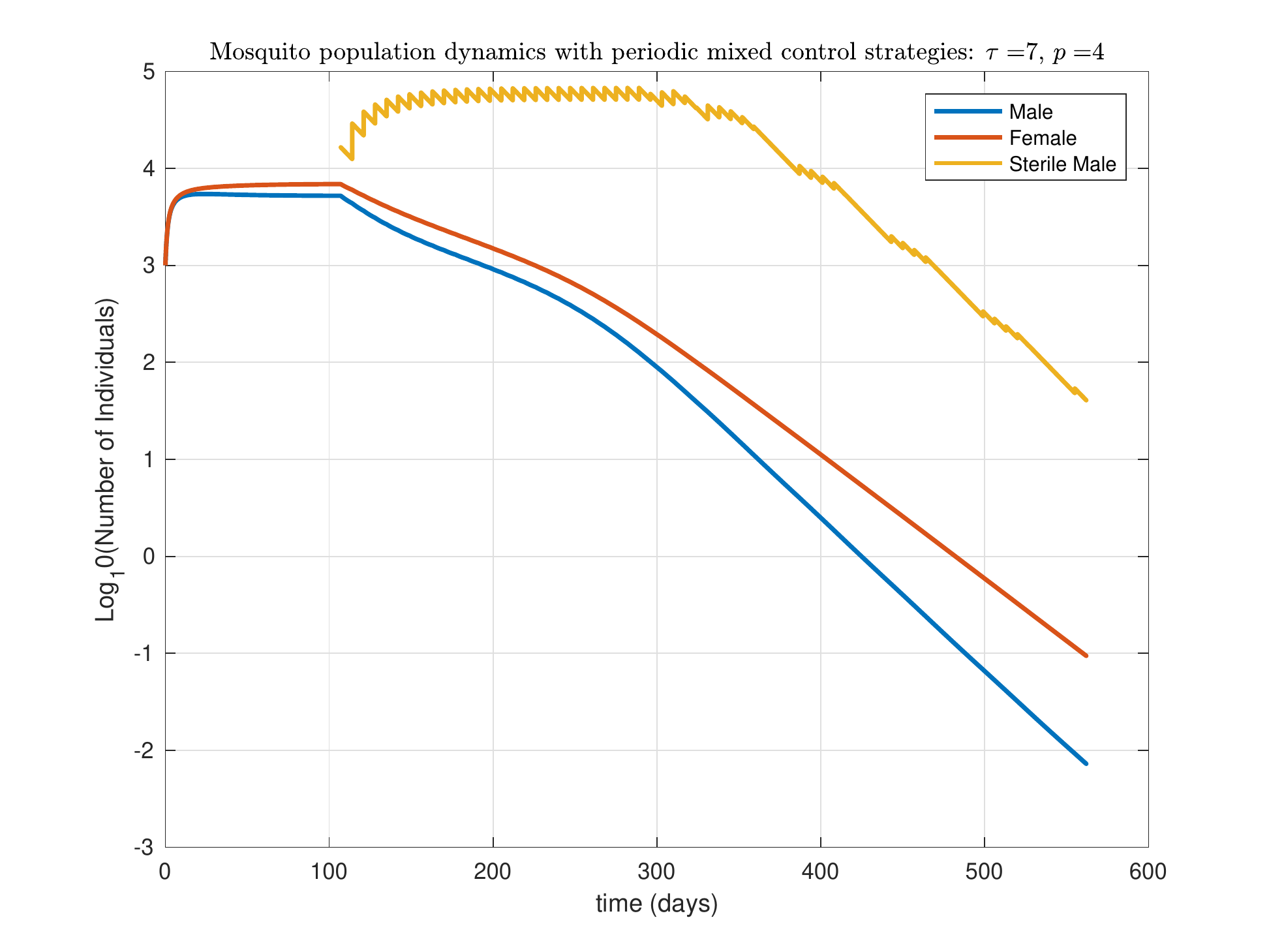} \\
 (a) & (b) \\
  \includegraphics[width=.5\textwidth]{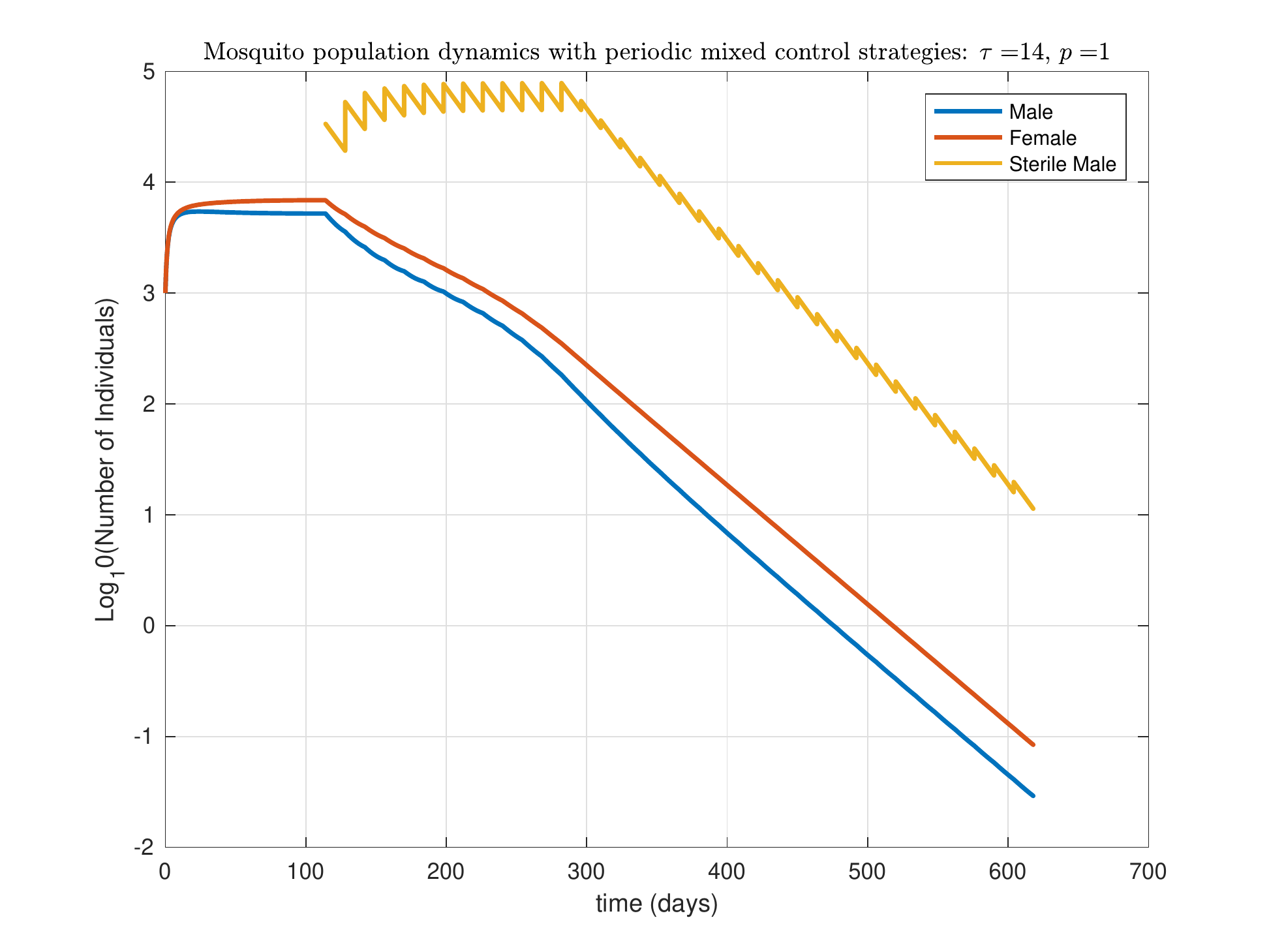} &  \includegraphics[width=.5\textwidth]{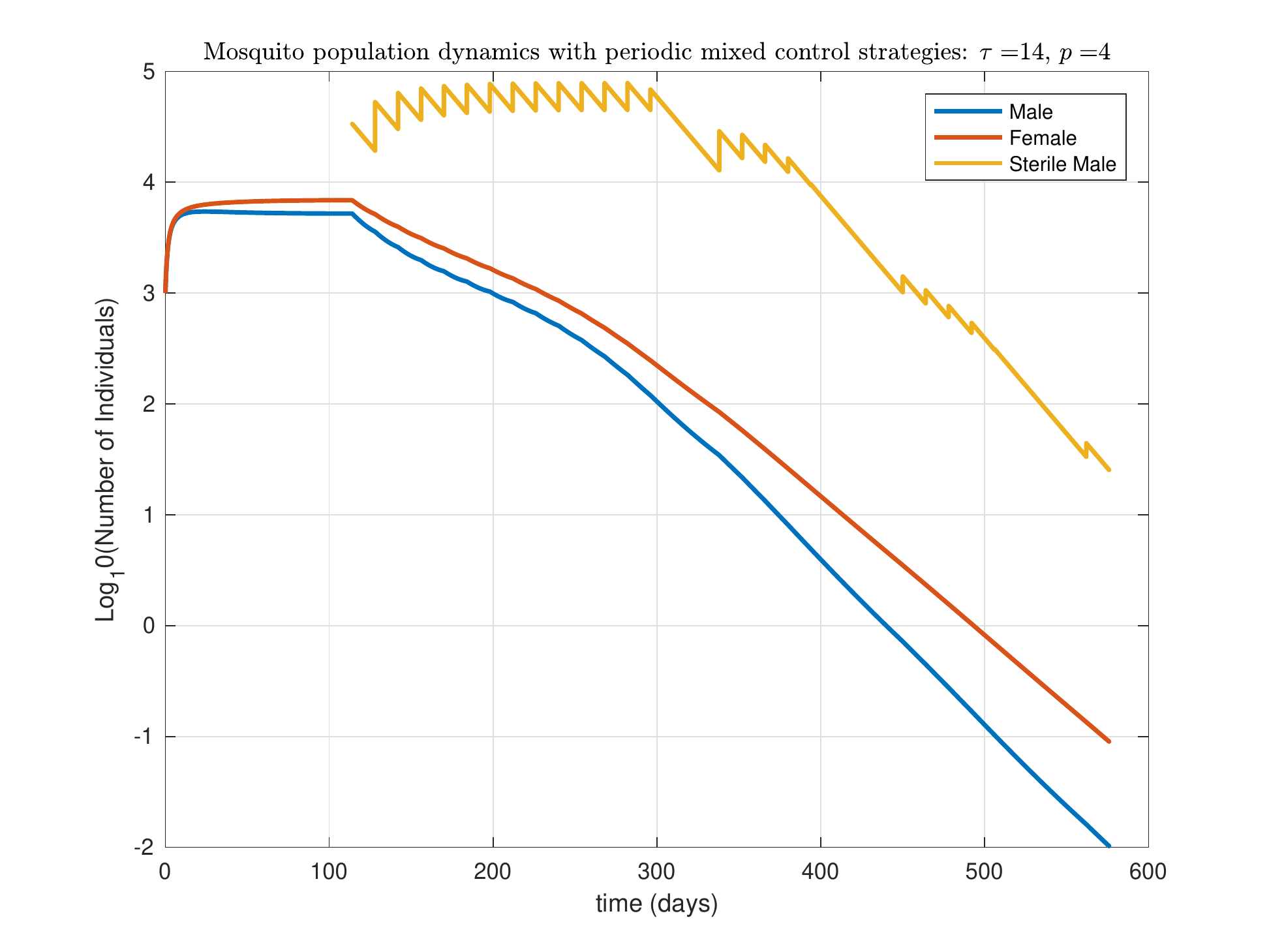} \\
  (c) & (d)
  \end{tabular}
 \caption{Combination of open/closed-loop periodic impulsive SIT control of system \eqref{system_SIT_imp} with $k=\dfrac{0.2}{\cN_F}$: (a) $7$ days, $p=1$; (b) $7$ days, $p=4$; (c) $14$ days, $p=1$; (d) $14$ days $p=4$. See Table \ref{tab4a}, page \pageref{tab4a}.}
 \label{fig:3a}
 \end{figure}

\begin{table}[!ht]
\centering
\begin{tabular}{|c|c|c|>{\centering}p{1.5cm}|>{\centering}p{1.5cm}|c|}   \hline
 & \multicolumn{2}{|c|}{Cumulative Nb of}  & \multicolumn{2}{c|}{Nb of weeks needed} &  \\
  & \multicolumn{2}{|c|}{released sterile males}  & \multicolumn{2}{c|}{to reach elimination} & Nb of \tcm{nonzero} releases \\
 \hline
 \backslashbox{Period}{p} & $1$ & $4$ & $1$ & $4$ & $4$ \\
 \hline
$\tau=7$  & $ 450,668$ & $534,849$ & 72 & 65 & 53 \\
\hline
$\tau=14$  & $465,187$ & $499,497$ &  72 & 66 & 25 \\
\hline
\end{tabular}
\caption{Cumulative number of released sterile males and number of releases for each mixed open/closed-loop periodic SIT control treatment when $k=\dfrac{0.2}{\cN_F}$. See Figure \ref{fig:3a}, page \pageref{fig:3a}.}
\label{tab4a}
\end{table}

\begin{figure}[!t]
\begin{tabular}{cc}
 \includegraphics[width=.5\textwidth]{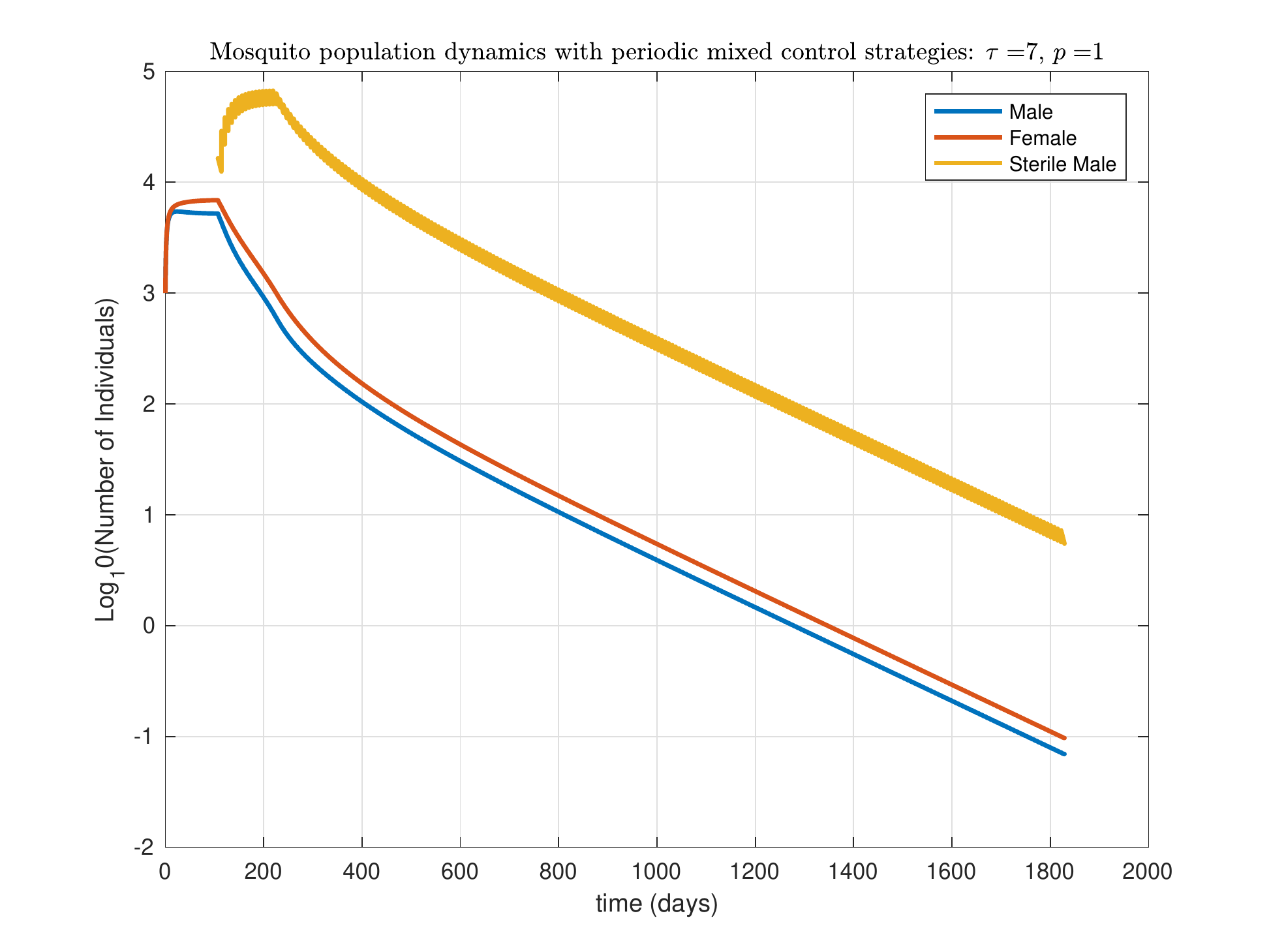} &
 \includegraphics[width=.5\textwidth]{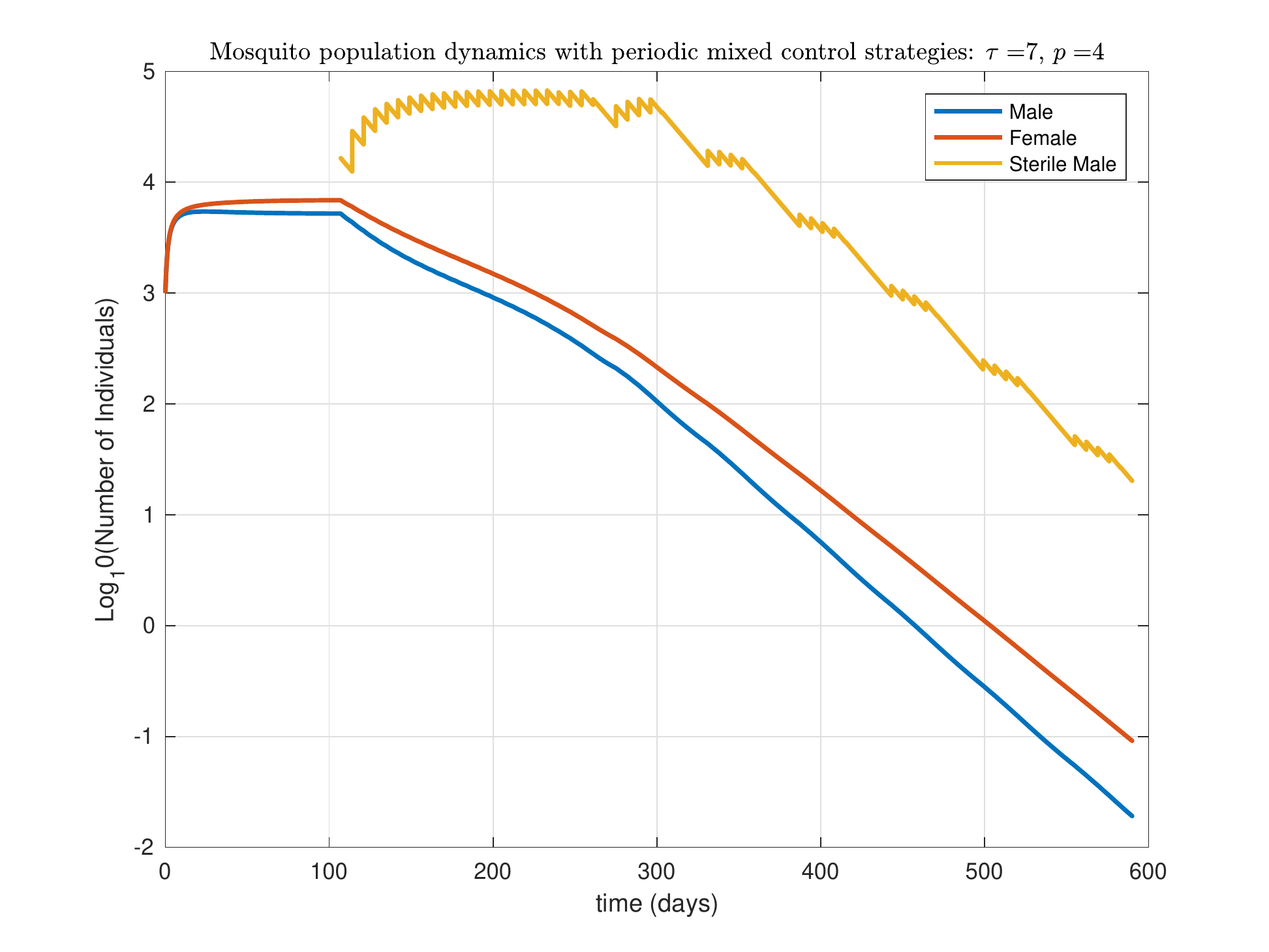} \\
  (a) & (b) \\
  \includegraphics[width=.5\textwidth]{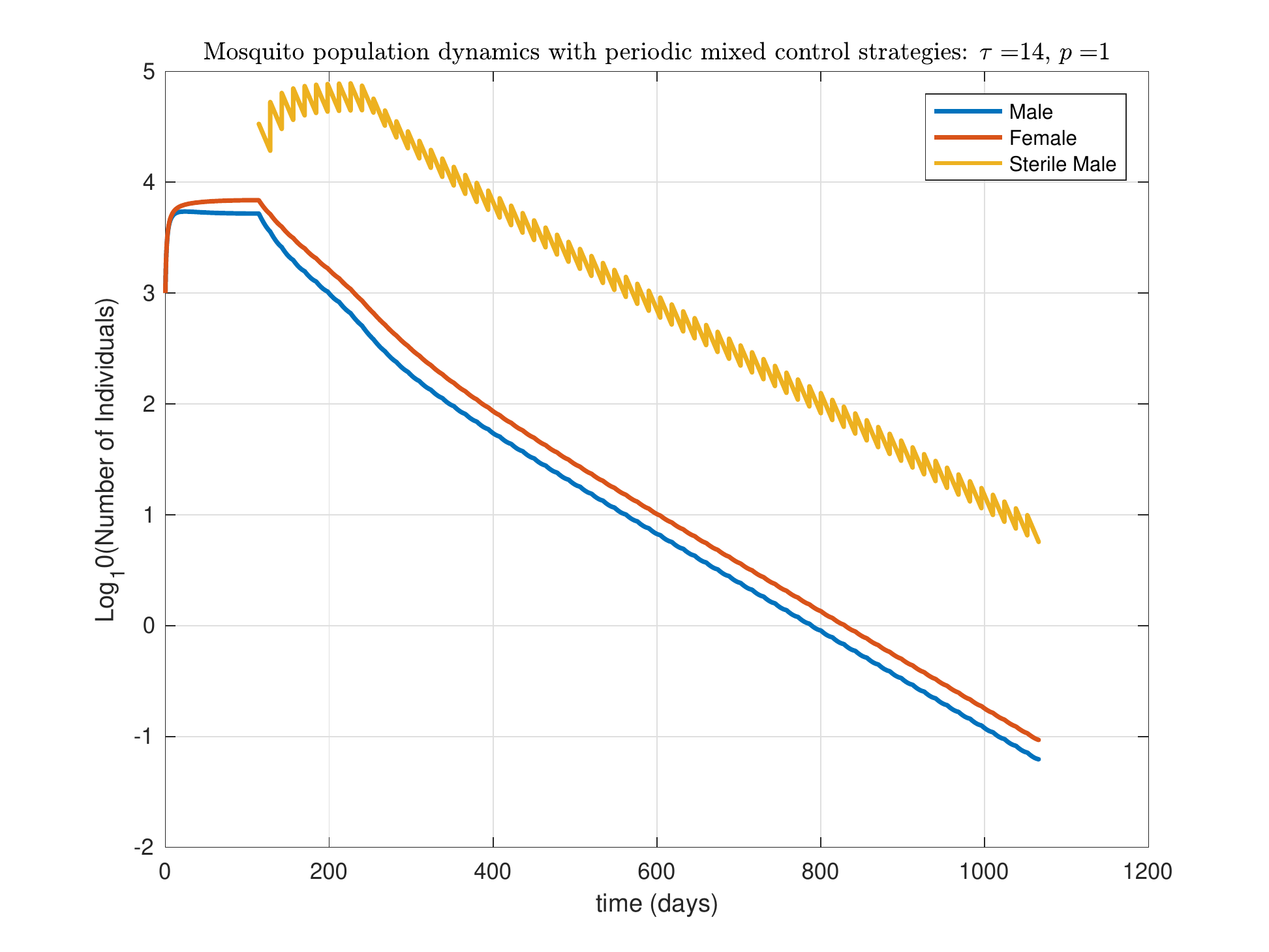}&
 \includegraphics[width=.5\textwidth]{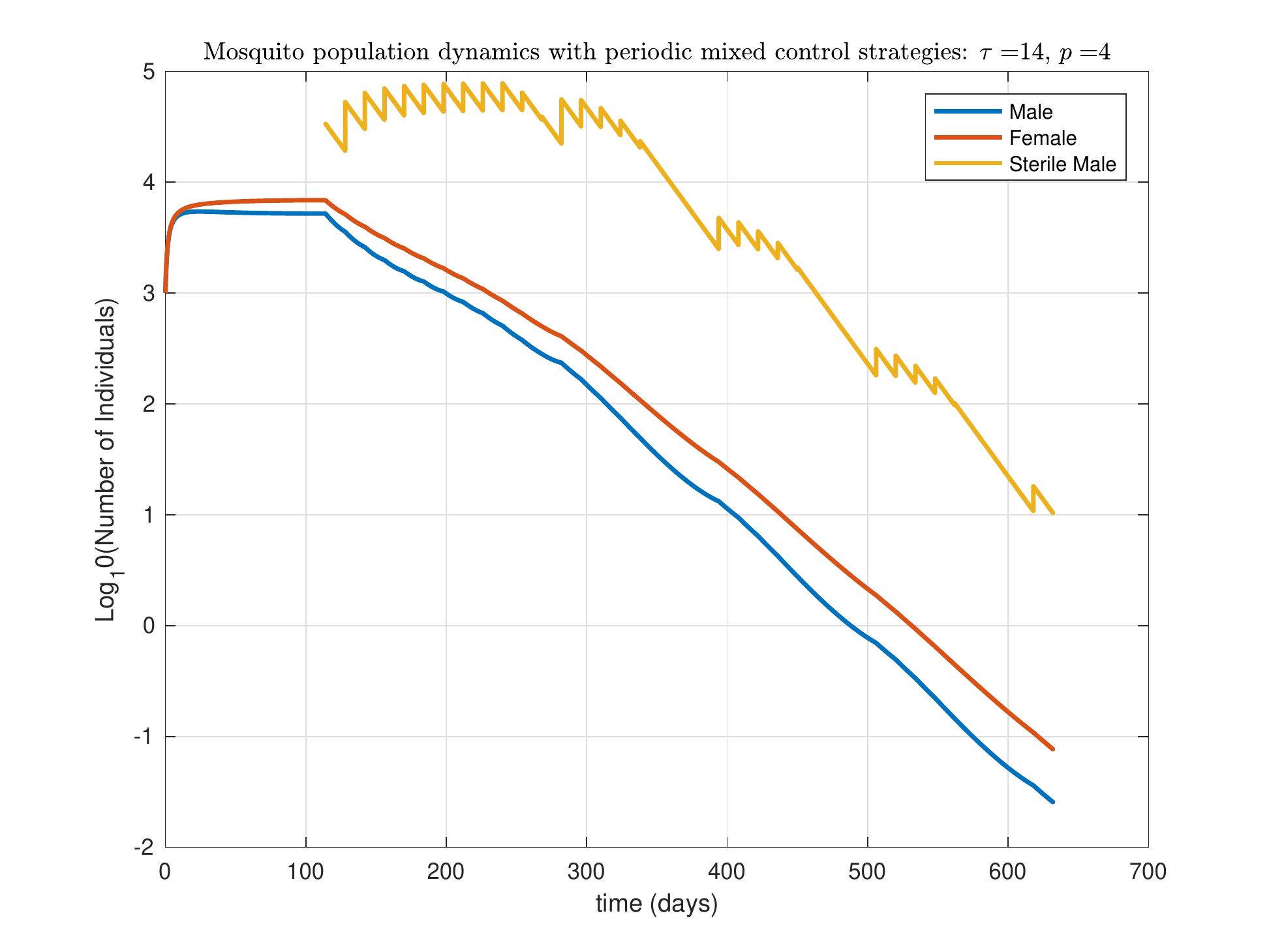} \\
  (c) & (d) \\
  \end{tabular}
 \caption{Combination of open/closed-loop periodic impulsive SIT control of system \eqref{system_SIT_imp}, with $k=\dfrac{0.99}{\cN_F}$: (a) $7$ days, $p=1$, (b) $7$ days, $p=4$, (c) $14$ days, $p=1$, (d) $14$ days $p=4$. See Table \ref{tab5}, page \pageref{tab5}.}
 \label{fig:3b}
 \end{figure}

 \begin{table}[!ht]
\centering
\begin{tabular}{|c|c|c|>{\centering}p{1.5cm}|>{\centering}p{1.5cm}|c|}   \hline
 & \multicolumn{2}{|c|}{Cumulative Nb of}  & \multicolumn{2}{c|}{Nb of weeks needed} &  \\
  & \multicolumn{2}{|c|}{released sterile males}  & \multicolumn{2}{c|}{to reach elimination} & Nb of {\color{black} nonzero} releases \\
 \hline
  \backslashbox{Period}{p} & $1$ & $4$ & $1$ & $4$ & $4$ \\
 \hline
 $\tau=7$  & $ 457,489$ & $450,077$ & 246 & 69 & 53 \\
\hline
$\tau=14$  & $427,701$ & $449,059$ & 136 & 74 &  28 \\
\hline
\end{tabular}
\caption{Cumulative number of released sterile males and number of releases for each mixed open/closed-loop periodic SIT control treatment when $k=\dfrac{0.99}{\cN_F}$. See Figure \ref{fig:3b}, page \pageref{fig:3b}.}
\label{tab5}
\end{table}

\begin{figure}[!t]
\begin{tabular}{cc}
\includegraphics[width=.5\textwidth]{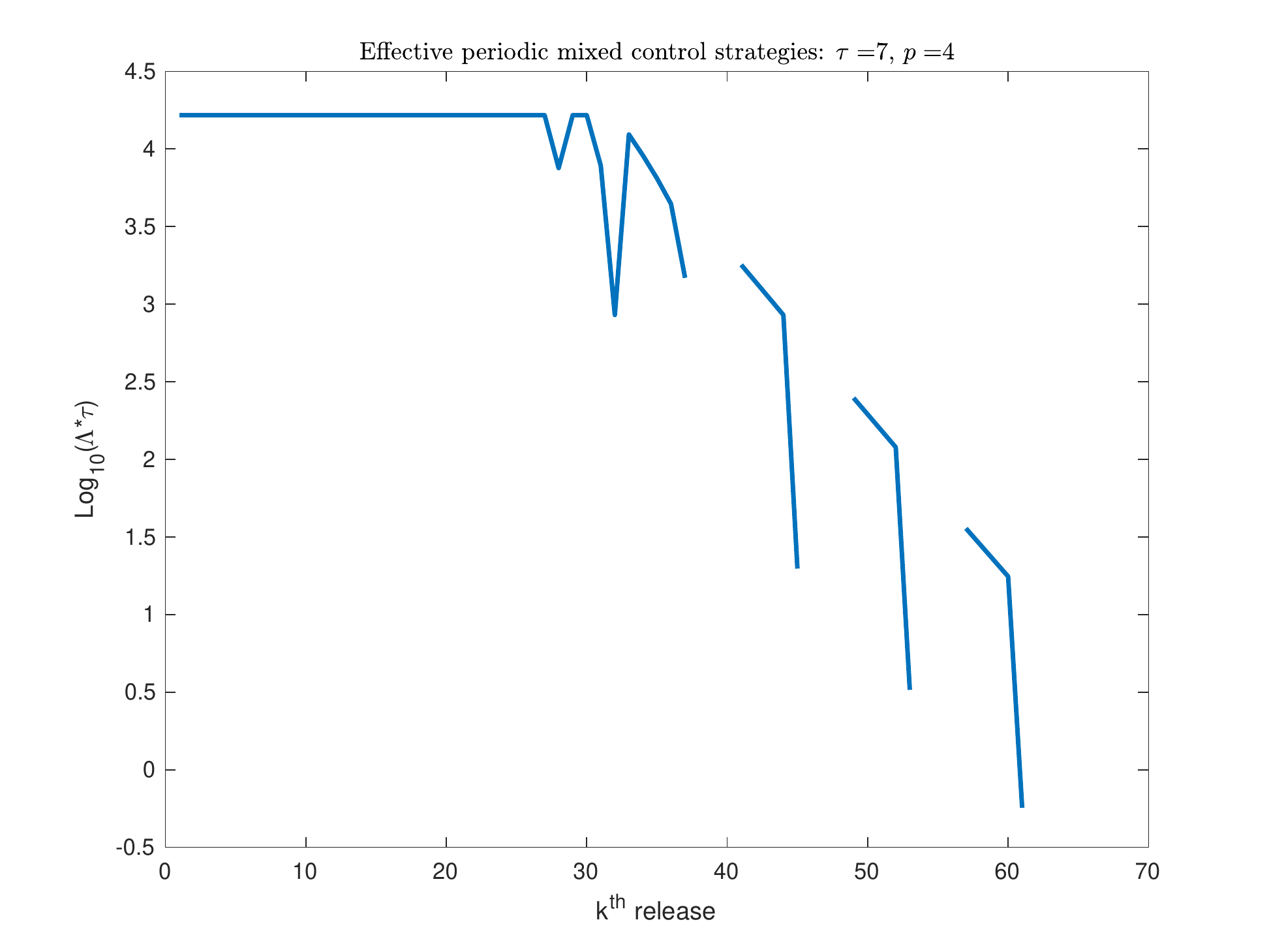} &  \includegraphics[width=.5\textwidth]{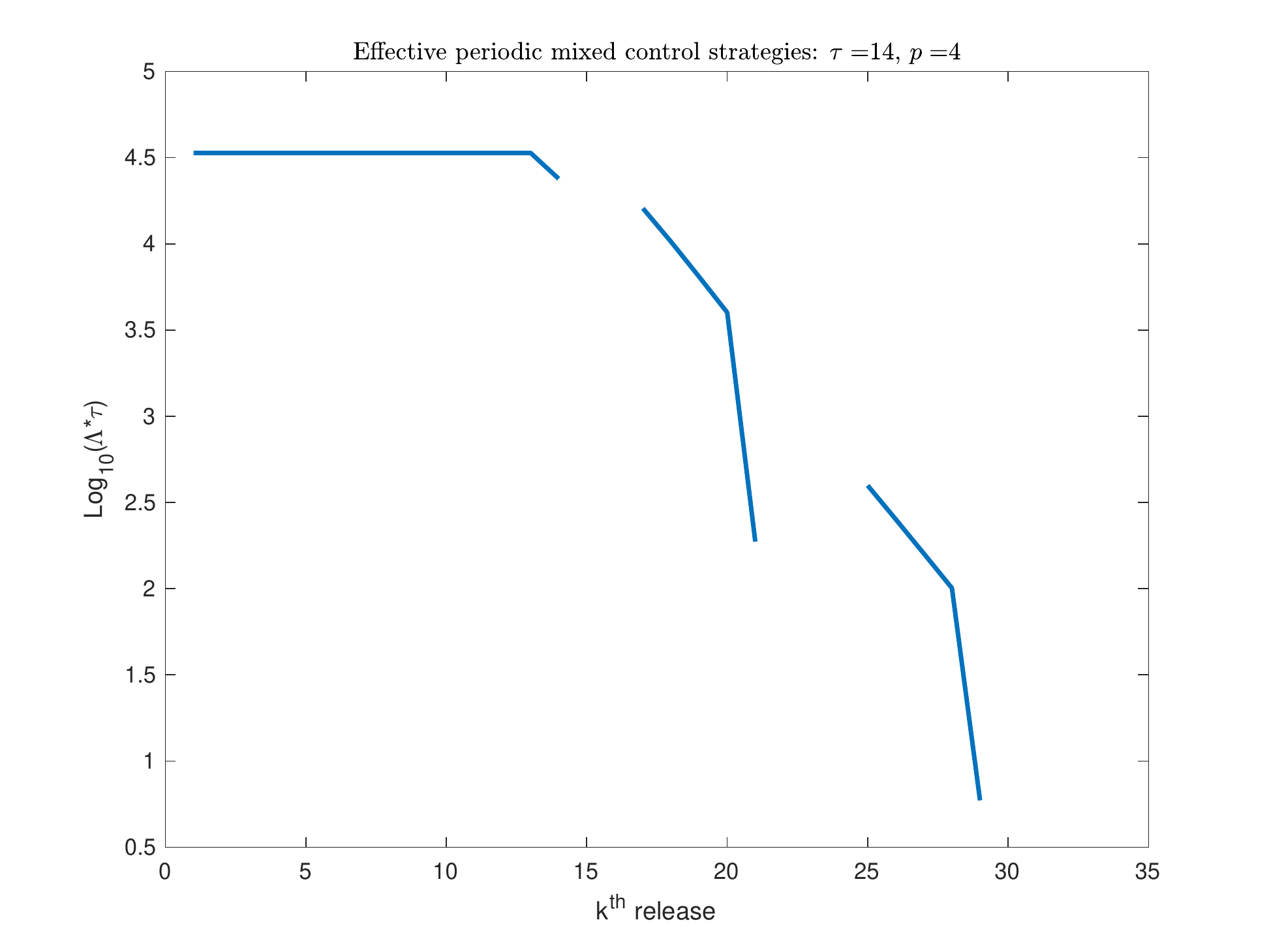} \\
 (a) & (b) \\
 \includegraphics[width=.5\textwidth]{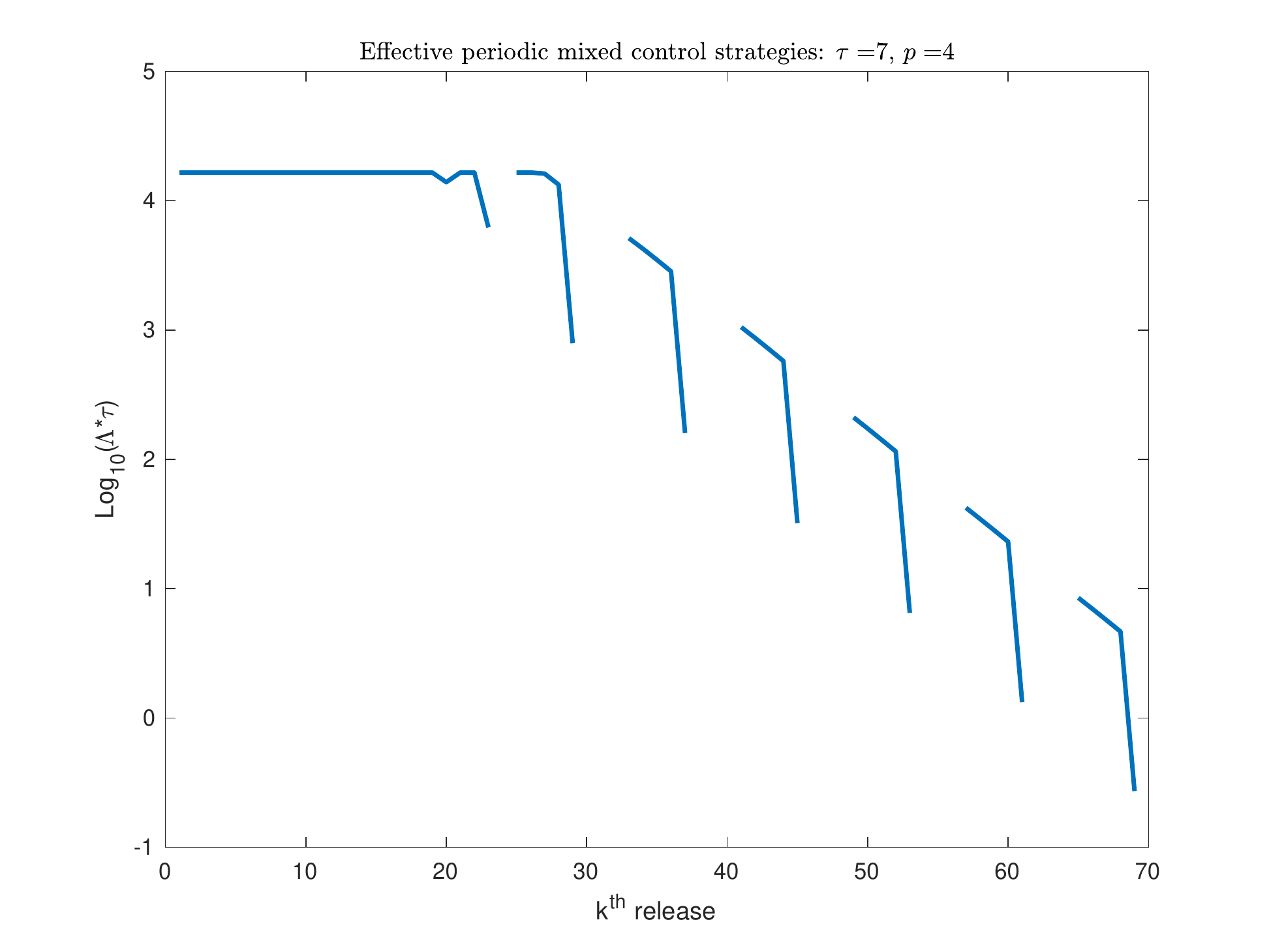} &   \includegraphics[width=.5\textwidth]{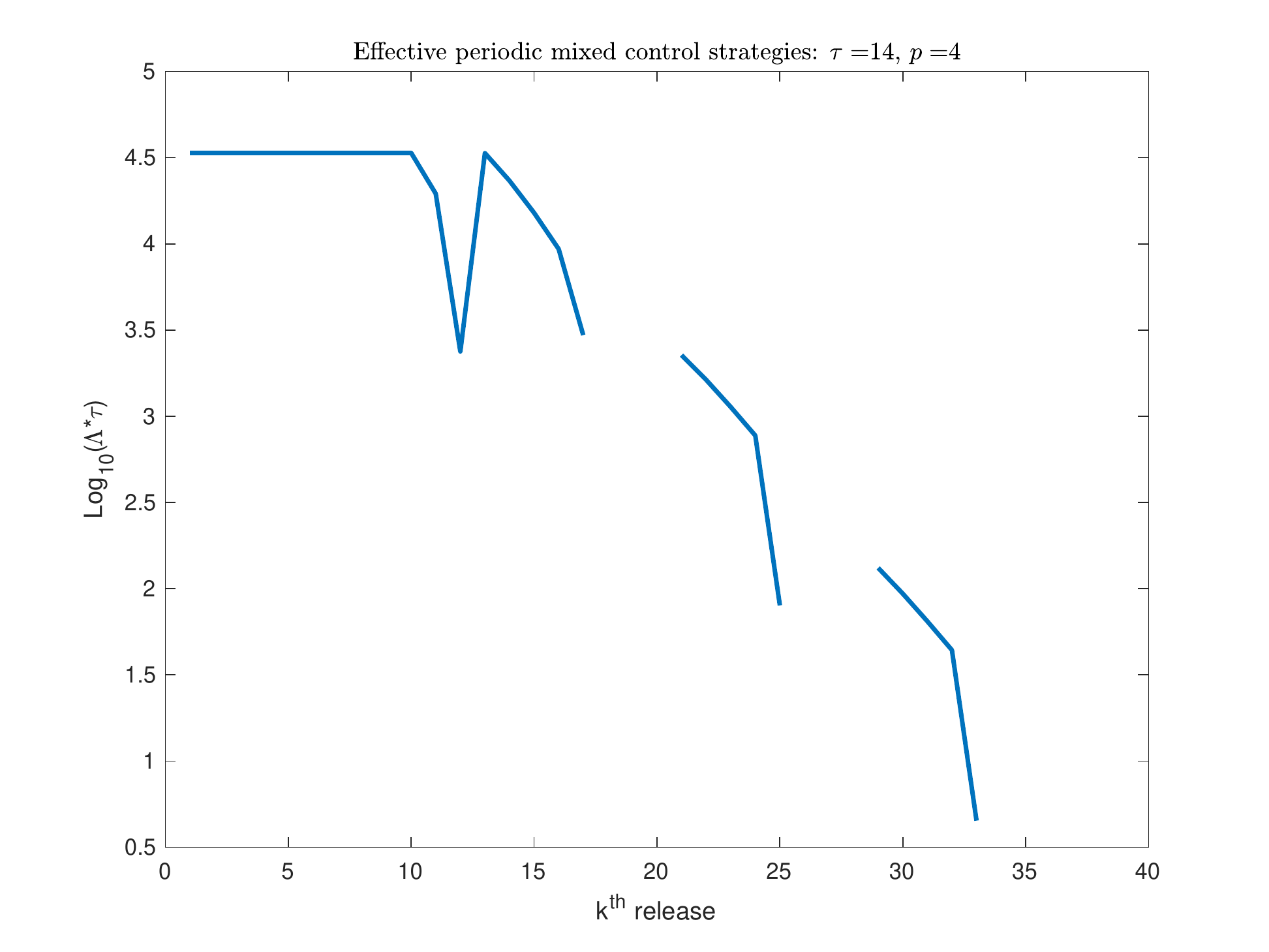} \\
 (c) & (d)
 \end{tabular}
 \caption{Size of the release, $\Lambda_n$, at time $t=n\tau$ for mixed open/closed-loop SIT control: (a,b) $k=\dfrac{0.2}{\cN_F}$; (c,d) $k=\dfrac{0.99}{\cN_F}$.  The discontinuities indicate ``no release''.}
 \label{fig:8}
 \end{figure}

%\noindent
Except for the case with $(\tau,p)=(7,1)$ and  $k=\dfrac{0.99}{\cN_F}$ (see  Table \ref{tab4a}, page \pageref{tab4a}), where the convergence to $E^*_0$ is slow, it turns out that the mixed open/closed-loop control strategies derive the best results, not only in terms of releases number but also in terms of overall cumulative number of sterile males to be released.

According to Tables {\color{black} \ref{tab4a} and \ref{tab5}} (pages \pageref{tab4a} and \pageref{tab5}, respectively), for both values of $k$, the optimal solution would be to release sterile insects every 2 weeks with  population assessments carried out by MRR experiments  every 4 weeks ($p=4$). In addition, and thanks to \eqref{eq450practical}, Figure \ref{fig:8} displays the release sizes $\Lambda_n$ for each mixed strategy. It clearly shows that during the first releases, $\Lambda_n=\Lambda^{crit}_{per}$. Further, when the wild population drops below a certain threshold, the feedback control occurs or not, depending on the (estimated) size of the sterile male population. That is why in Tables \ref{tab4a} and \ref{tab5}, we derive the number of effective releases (only for the case $p=4$) which confirms that the best combination is $(\tau,p)=(14,4)$, regardless of the value of $k$.

For the mixed open/closed-loop periodic impulsive SIT control, the choice of $k$ does not matter compared to the closed-loop control only. Our preliminary results thereby indicate that a mixed SIT control option with $(\tau=14,p=4)$ leads to the best strategy in terms of the total number of released sterile males and also in terms of effective releases number.

%However, while elimination is possible on islands or small places and usually to reduce the nuisance \cite{Strugarek2018b}, another option of our SIT control is to control the mosquito population under a certain threshold, such that the epidemiological risk is low ($\mathcal{R}_0<1$ \cite{dumont2012}). Our approach allows to estimate the minimal number of sterile males to maintain the wild population under this threshold.

\section{Conclusion}
In this work, we studied various strategies to control mosquito population using SIT: open-loop and closed-loop periodic impulsive control strategies, as well as their combination (mixed open/closed-loop strategy). For the open-loop strategy (that is usually considered during field experiments) we found the minimal {\color{black} number %$\Lambda^{crit}_{per}(\tau)$
of sterile} males to be released every $\tau$ days in order to reach elimination of wild mosquitoes.
{\color{black} This number is constant and relatively low.
The question of determining a stopping time for the release campaigns is not simple, but clearly of primordial importance, as premature ending ruins the preceding efforts.}
%(but constant) for all consequent releases with chosen period $\tau$, the release campaign may last for considerable time and thus enlarge the cumulative number of sterile males released in the target locality.

{\color{black} On the contrary,}
%, and this is the major avail of using a closed-loop control mode,
the feedback SIT-control commences with relatively abundant releases and their amplitude steadily declines with the wild population size until fading away and vanishing when the system converges towards the desired mosquito-free state.
{\color{black} This closed-loop} control strategy requires to assess the current size of the wild population (using MRR experiments, for instance).
%Last but not least, if the choice of $k$ does not have an impact on the number of the releases, it does on the cumulative number of sterile males to be released (compare Tables \ref{tab21} and \ref{tab3}): the closer $k$ stands to $1/\cN_F$, the lower is the total amount of released sterile males.

Finally, we proposed a mixed control strategy, combining  open-loop and closed-loop strategies. This control input mode renders the best result, and turns out rather meaningful from the experimental standpoint: the control input is launched at the open-loop mode during first weeks (initial phase) and then is shifted to the closed-loop mode (final phase), once the size of wild population exhibits steady decline. With this approach, the gain in terms of {\color{black} release pick-value, number of nonzero releases, and overall cumulative volume is clearly visible.}
%, regardless of the value of $k$.}
This is due to the fact that initial phase of control action is done at the open-loop mode, {\color{black} i.e.\ by performing releases of sterile} males regardless of the current size of wild population, what induces an essential decline of the wild population before switching to the closed-loop control mode.
{\color{black} Even considering the shown simulations} in terms of cost, the mixed control seems to be definitively the best choice with a release every two weeks and a population estimate every four weeks.

Knowledge of the cost of each stage of the SIT control (mass rearing, sterilization either by irradiation or using {\em Wolbachia}, transportation to the target locality, wild population measurements with MRR techniques, and other necessary supplies) will allow to estimate more precisely and optimize the treatment cost, and thus to make the most appropriate choices from an economical point of view.

As a last remark, we notice that, from a mathematical point of view, the use of closed-loop methods, as well as the fact that the proof of their effectiveness is based on argument of monotonicity, are certainly able to guarantee robustness of the proposed closed-loop algorithms with respect to several uncertainties present in the problem under study.
In particular, it is believed that the framework developed here could most certainly be extended to consider the effects of modeling and measurement errors, as well as imprecision and delay in the control-loop.

%Proposition {\rm\ref{prop2}} allows to account for uncertainty on the model parameters and unmodeled dynamics in \eqref{one}.

%Extensions of Theorem {\rm\ref{th1}} may be studied. For example, one could achieve releases and measurements with different, commensurate, periods.
%As an example, one may wish to reduce the important cost generated by the measurements.

\vspace{1cm}
\section*{Acknowledgments}
Support from the Colciencias -- ECOS-Nord Program (Colombia: Project CI-71089; France: Project \textcolor{black}{C17M01}) is kindly acknowledged. DC and OV were supported by the inter-institutional cooperation program MathAmsud (18-MATH-05). This study was part of the Phase 2A {\color{black} `SIT feasibility project against \textit{Aedes albopictus} in Reunion Island'}, \tcm{jointly funded by the French Ministry of Health (Convention 3800/TIS) and the European Regional Development Fund (ERDF) (Convention No. 2012-32122 and Convention No. GURDTI 2017-0583-0001899). YD was (partially) supported by the DST/NRF SARChI Chair  in Mathematical Models and Methods in Biosciences and Bioengineering at the University of Pretoria (grant 82770). YD also acknowledges the support of the Visiting Professor Granting Scheme from the Office of the Deputy Vice-Chancellor for Research Office of the University of Pretoria.}

\bibliographystyle{siam}
\bibliography{biblio}
\newpage

\begin{appendix}
\appendix
\renewcommand{\theequation}{A-\arabic{equation}}
  % redefine the command that creates the equation no.
  \setcounter{equation}{0}  % reset counter
\renewcommand{\thefigure}{A.\arabic{figure}}
  % redefine the command that creates the equation no.
  \setcounter{figure}{0}  % reset counter
  \renewcommand{\thetable}{A.\arabic{table}}
   \setcounter{table}{0}  % reset counter

\section*{Appendix: Proof of Lemma {\rm\ref{le0}}, page \pageref{le0}}
\label{seA}
First, it is easy to check that function $f$, defined in (\ref{eeq2}), page \pageref{eeq2}, is first decreasing and then increasing, and, thus, may solely have no root, one root or two zeros.

On the other hand, the number of roots of $f(x)=0$ is clearly {\em non-increasing} with respect to $a>0$: it has two roots for `small values' of $a$, no root for `large values' of $a$, and exactly one root for a certain critical value $a^{crit}$ separating the two previous regions.
This critical value is characterized by the fact that it possesses a double root $x^{crit}>0$, such that $f(x^{crit}) = f'(x^{crit}) =0$, that is:
\begin{equation}
\label{eeq3}
1+\frac{a^{crit}}{x^{crit}} = be^{-cx^{crit}},\qquad \frac{a^{crit}}{(x^{crit})^2} = bce^{-cx^{crit}}.
\end{equation}
Eliminating the exponential term in the previous formulas yields the second-order polynomial equation in $\dfrac{1}{x^{crit}}$
\[
\left(
\frac{1}{x^{crit}}
\right)^2-\frac{c}{x^{crit}} -\frac{c}{a^{crit}}=0.
\]
Its unique positive root is
\[
\frac{1}{x^{crit}} = \frac{c+\sqrt{c^2+4\dfrac{c}{a^{crit}}}}{2}
= \frac{c}{2}\left(
1+\sqrt{1+\frac{4}{a^{crit}c}}
\right),
\quad \text{ that is: }
\qquad
x^{crit} := \frac{2}{c}\frac{1}{1+\sqrt{1+\dfrac{4}{a^{crit}c}}}.
\]
Introducing this expression back in \eqref{eeq3}, leads to
$$
1+\frac{a^{crit}c}{2}\left(
1+\sqrt{1+\frac{4}{a^{crit}c}}
\right) = be^{-\frac{2}{1+\sqrt{1+\frac{4}{a^{crit}c}}}}.
$$
Thus $\phi^{crit}:= \dfrac{a^{crit}c}{2}$ is solution of \eqref{eeq1}, page \pageref{eeq1}, with $\phi^{crit}:= \dfrac{a^{crit}c}{2}$, such that, at the critical point, the parameters $a^{crit},b,c$ are interrelated.

For positive values of $a$ smaller than $a^{crit}$, the equation $f(x)=0$ has two roots, and no root whenever $a>a^{crit}$.
This achieves the proof of Lemma \ref{le0}.
\end{appendix}
\end{document}